\tikzset{subcap/.style={below,text width=6cm,inner ysep=0em}}
\pgfplotsset{compat=1.17}
\pgfplotsset{
  every axis/.append style={
    scale only axis,
    xlabel near ticks, ylabel near ticks,
    every axis title shift=3pt,
    cycle list={}, bar cycle list={},
    legend cell align=left,
    enlargelimits=0.05,
  }
  every axis plot/.append style={
    every mark/.append style={solid},
  }k}
\newif\ifloadfigs
\newcommand{\minimize}{\mathop{\mbox{minimize}{}}}
\newcommand{\E}{\mathbf{E}}
\newcommand{\Var}{\textup{Var}}
\newcommand{\R}{\mathbf{R}}
\newcommand{\ones}{\mathbf{1}}
\newcommand{\sign}{\textup{sign}}
\newcommand{\Tr}{\textup{Tr}}
\newcommand{\SNR}{\mathrm{SNR}}
\newcommand{\norm}[1] {\left \| #1 \right \|}
\newcommand{\cov}{\textup{Cov}}
\newcommand{\corr}{\textup{Corr}}
\newcommand{\var}{\textup{Var}}
\newcommand{\cF}{\mathcal{F}}
\newcommand{\cO}{\mathcal{O}}
\newtheorem{assumption}{Assumption}
\begin{document}

\title{Stochastic Approximation with Block Coordinate\\ Optimal Stepsizes}

\author{\name Tao Jiang \email taojiang@meta.com \\
       \name Lin Xiao \email linx@meta.com \\
       \addr Fundamental AI Research (FAIR)\\
       Meta Superintelligence Labs\\ 
       Seattle, Washington 98109, USA.
}
\editor{}

\maketitle

\begin{abstract}
We consider stochastic approximation with block-coordinate stepsizes and propose adaptive stepsize rules that aim to minimize the expected distance from the next iterate to an (unknown) target point.
These stepsize rules employ online estimates of the second moment of the search direction along each block coordinate. 
The popular Adam algorithm can be interpreted as a variant with a specific estimator.
By leveraging a simple conditional estimator, we derive a new method that obtains competitive performance against Adam but requires less memory and fewer hyper-parameters.
We prove that this family of methods converges almost surely to a small neighborhood of the target point, and the radius of the neighborhood depends on the bias and variance of the second-moment estimator. 
Our analysis relies on a simple aiming condition that assumes neither convexity nor smoothness, thus has broad applicability.
% Maybe should state "optimal point" as "target point"? Optimal with respect to what?
\end{abstract}

\begin{keywords}
stochastic approximation, block coordinate methods, conditional estimator, aiming condition, almost sure convergence. 
% \todo{TJ: Conditional estimator?}
\end{keywords}

\section{Introduction}
\label{sec:intro}
We consider unconstrained stochastic optimization problems of the form
\begin{equation}\label{eqn:stoch-opt}
\minimize_{x\in\R^n}~F(x):= \E_{\xi}[f(x,\xi)],
\end{equation}
where $x\in\R^n$ is the decision variable, $\xi$ is a random variable, and~$f$ is the loss function. 
In the context of statistical machine learning \citep[e.g.,][]{hastie2009elements}, $x$ represents the parameters of a prediction model, $\xi$~represents randomly sampled data from some (unknown) probability distribution, and $f(x,\xi)$ is the error in making predictions about~$\xi$ using the model parametrized by~$x$.
We assume that~$F$ is bounded below.
%and there exists an optimal solution~$x_*$.
%\todo{In this paper, $x_*$ is not really the minimizer of~$F$, so we do not name it here!}

Suppose that for any pair $x$ and $\xi$, we can evaluate the gradient of~$f$ with respect to~$x$, denoted as $\nabla f(x,\xi)$.
Starting with an initial point $x_0\in\R^n$, 
the classical stochastic approximation method \citep{RobbinsMonro51} generates a sequence $\{x_1,x_2,\ldots\}$ with the update rule
\begin{equation}\label{eqn:sgd}
x_{t+1} = x_t - \alpha_t  \nabla f(x_t,\xi_t),
\end{equation}
where $\alpha_t$ is the stepsize (often called the \emph{learning rate} in the machine learning literature). 
The convergence properties of this method are well studied
in the stochastic approximation literature
\citep[e.g.,][]{RobbinsMonro51,Wasan69book,KushnerYin03book}.
In particular, the stepsizes $\alpha_t$ should be nonnegative and unsummable, i.e., $\sum_{t=0}^\infty\alpha_t=\infty$.
Stronger guarantees such as almost sure convergence also require $\alpha_t$ to be square-summable, i.e., $\sum_{t=0}^\infty\alpha_t^2<\infty$
\citep[e.g.,][]{RobbinsMonro51,Blum1954multidim,chung1954,ROBBINS1971}.

Despite the rich literature on their convergence theory, stochastic approximation methods in practice often require heuristics and trial and error in choosing the stepsize sequence~$\{\alpha_t\}$.
Adaptive rules that can adjust stepsizes on the fly have been developed in both the optimization literature
\citep[e.g.,][]{Kesten58, MirzoakhmedovUryasev83, RuszczynskiSyski83stats, RuszczynskiSyski84IFAC, DelyonJuditsky93} 
and by the machine learning community
\citep[e.g.,][]{Jacobs88DBD, Sutton92IDBD, Schraudolph1999, MahmoodSutton12TuningFree}.
More recently, adaptive algorithms that use \emph{coordinate-wise} stepsizes have become very popular following the seminal works of AdaGrad \citep{DuchiHazanSinger2011adagrad} and Adam \citep{KingmaBa2014adam}, and they demonstrate significantly better performance in training large-scale deep neural networks. 

In this paper, we propose a family of stochastic approximation methods with block-coordinate stepsizes and present a general framework for their convergence analysis.

\subsection{Stochastic approximation with block-coordinate stepsizes}

We focus on stochastic approximation methods of the form
%with \emph{block-coordinate stepsizes}, specifically of the form
\begin{equation}\label{eqn:bc-sgd}
x_{t+1} = x_t - s_t \odot d_t ,
\end{equation}
where $d_t\in\R^n$ is a stochastic search direction, 
$s_t\in\R^n$ is a vector of coordinate-wise stepsizes, 
and $\odot$ denotes element-wise product (Hadamard product) of two vectors.
Other than using a different stepsize for each single coordinate, a block structure can be imposed where coordinates within each block share a common stepsize (see Section~\ref{sec:notations}).

The two most common choices for the search direction are:
the \emph{stochastic gradient}, i.e., $d_t=\nabla f(x_t, \xi_t)$,
and its \emph{exponential moving average (EMA)}. 
Letting $g_t=\nabla f(x_t,\xi_t)$, the EMA of stochastic gradient can be expressed as  
\begin{equation}\label{eqn:d-momentum}
d_t=\beta_1 d_{t-1} + (1-\beta_1) g_t,
\end{equation}
where $\beta_1\in[0,1)$ is a smoothing factor.
This is often called \emph{stochastic momentum}. 

The Adam algorithm \citep{KingmaBa2014adam} uses the stochastic momentum in~\eqref{eqn:d-momentum} as its search direction and sets the coordinate-wise stepsizes as
\begin{equation}\label{eqn:adam-stepsize}
s_{t,i}=\frac{\alpha_t}{\sqrt{v_{t,i}}+\epsilon}, \qquad i=1,\ldots,n,
\end{equation}
where $\alpha_t\in\R$ is a common stepsize \emph{schedule} for all coordinates, $v_{t,i}$ is the EMA of $g_{t,i}^2$, and $\epsilon>0$ is a small constant to improve numerical stability when $v_{t,i}$ becomes very close to zero.
Here, computing $v_{t,i}$ requires a second smoothing factor~$\beta_2\in[0,1]$, i.e., 
\begin{equation}\label{eqn:adam-v}
v_{t,i}=\beta_2 v_{t-1,i} + (1-\beta_2)g^2_{t,i}, \qquad i=1,\ldots,n.
\end{equation}
For Adam to perform well, the second smoothing factor $\beta_2$ is often set to be larger than~$\beta_1$. For example, a pair of commonly used choices is $\beta_1=0.9$ and $\beta_2=0.99$.

Adam \citep{KingmaBa2014adam} and its variant AdamW \citep{Loshchilov2019AdamW}, which implements \emph{decoupled weight decay}, have been very successful in training large-scale deep neural networks.
However, theoretical understanding of their superior performance is still incomplete despite many recent efforts
\citep[e.g.,][]{bock2018improvement,balles2018dissecting,reddi2019convergence,zou2019sufficient,defossez2020simple,Luo2022neurips,kunstner2024heavy}.
On the other hand, there are also many works that propose new variants or alternatives to Adam and AdamW, either starting from fundamental optimization principles \citep[e.g.,][]{yao2021adahessian,gomes2024adafisher,hwang2024fadam,jordan2024muon,lin2024can,pethick2025LMOs}
or based on empirical algorithmic search \citep[e.g.,][]{chen2023symbolic,zhang2024adam}.
But all have limited success. 
Adam and especially AdamW are still the most widely used algorithms for training large deep learning models, and their effectiveness remains a myth.

\subsection{Contributions and outline}

In this paper, we propose a family of \emph{block-coordinate optimal stepsize} (BCOS) rules for stochastic approximation.
BCOS gives a novel interpretation of Adam and AdamW and provides their convergence analysis as special cases of a general framework. 
Moreover, we derive variants of BCOS that obtain competitive performance against Adam(W) but require less memory (optimizer states) and fewer hyper-parameters. 
More specifically:

\begin{itemize}
\item 
In Section~\ref{sec:derivation}, we derive BCOS by minimizing the expected distance from the next iterate to a target point.
While the optimal stepsizes cannot be computed exactly, we make several simplifications and approximate the second moments of the search direction with simple EMA estimators.
\item
In Section~\ref{sec:instantiation}, we instantiate BCOS with specific search directions. 
In particular, we show that RMSProp \citep{rmsprop2012lecture} and Adam \citep{KingmaBa2014adam} can be interpreted as special cases of BCOS.
With the stochastic momentum as search direction, we leverage a simple conditional estimator to derive a variant that requires less memory and fewer hyper-parameters than Adam.
Integrating with decoupled weight decay \citep{Loshchilov2019AdamW} gives the BCOSW variants that enjoy competitive performance as AdamW.
\item
In Section~\ref{sec:experiments}, we present numerical experiments to compare several variants of BCOS(W) with Adam(W) on several deep learning tasks.
We observe that BCOSW performs better than BCOS, and BCOSW with the conditional estimator achieves top performance with fewer optimizer states and less hyper-parameter tuning.
\item 
In Section~\ref{sec:convergence}, we present the convergence analysis of BCOS(W) based on a simple aiming condition, which assumes neither convexity nor smoothness.
We prove that the BCOS family of methods (including Adam and AdamW) converge almost surely to a small neighborhood of a target point, and the radius of the neighborhood depends on the bias and variance of the second-moment estimator. 
\item 
We conclude the paper in Section~\ref{sec:conclusion}.
\end{itemize}

\paragraph{Generality}
The general form of stochastic approximation 
\citep[e.g.,][]{RobbinsMonro51,Wasan69book,KushnerYin03book}
concerns solving a nonlinear system of equations $G(x)=0$ where the map $G:\R^n\to\R^n$ only admits noisy evaluations in the form of $g(\cdot,\xi)$, where $\xi$ is a random variable, under the assumption that $G(x)=\E_\xi[g(x,\xi)]$.
In this case, stochastic approximation takes the form
\begin{equation}\label{eqn:general-sa}
x_{t+1} = x_t - \alpha_t\, g(x_t, \xi_t).
\end{equation}
Apparently, the stochastic gradient method~\eqref{eqn:sgd} is a special case with $g(x,\xi)=\nabla f(x,\xi)$. 
Although our main motivation is for solving the stochastic optimization problem~\eqref{eqn:stoch-opt} in the context of machine learning, the algorithms and convergence analysis developed in this paper apply to the general case of stochastic approximation with block-coordinate stepsizes. 

\subsection{Notations}
\label{sec:notations}

Let $\mathcal{I}_1,\ldots, \mathcal{I}_m$ be a non-overlapping partition of the index set $\{1,\ldots,n\}$, each with cardinality $n_k=|\mathcal{I}_k|$.
Correspondingly, we partition the vectors $x_t$, $s_t$ and $d_t$ into blocks 
$x_{t,k}$, $s_{t,k}$ and $d_{t,k}$ in $\R^{n_k}$ for $k=1,\ldots,m$.
%Here we use the simple notation $x_{t,k}$ instead of something like $x_{t,\mathcal{I}_k}$ or $x_{t,[k]}$, which would more clearly distinguish from the single coordinate notation of $x_{t,i}$.
We use a common stepsize $\gamma_{t,k}\in\R$ within each block, i.e.,
$s_{t,k} = \gamma_{t,k}\ones_{n_k}$, where $\ones_{n_k}$ denotes the vector of all ones of dimension~$n_k$.
As a result, the block-coordinate update in~\eqref{eqn:bc-sgd} can be written as
\[
x_{t+1,k} = x_{t,k} - s_{t,k}\odot d_{t,k} = x_{t,k} - \gamma_{t,k} d_{t,k}, \qquad k=1,\ldots,m.
\]
Notice that $\gamma_{t,k}$ is always a scalar and $\gamma_t$ is a vector in $\R^m$ instead of $\R^n$ (unless $m=n$).
%The following are the two extreme cases with block size $n$ and $1$ respectively:
%\begin{itemize}
%\item \emph{Full-dimension block:} $m=1$ and $\mathcal{I}_1 = \{1,\ldots,n\}$. We have a single stepsize $\gamma_t\in\R$ and $s_t=\gamma_t\odot\ones_n$. The algorithm becomes $x_{t+1} = x_t -\gamma_t d_t$, similar to~\eqref{eqn:sgd}.
%\item \emph{Single coordinate blocks}: $m=n$ and $\mathcal{I}_k = \{k\}$ for $k=1,\ldots,n$. In this case, we have $s_t=\gamma_t\in\R^n$ and can write $x_{t+1}=x_t-\gamma_t\odot d_t$, which is equivalent to~\eqref{eqn:bc-sgd}.
%\end{itemize}

Throughout this paper, $\langle\cdot,\cdot\rangle$ denotes the standard inner product in the Euclidean space~$\R^n$ and $\|\cdot\|$ denotes the induced Euclidean norm.
%For two matrices, $\langle A, B\rangle=\Tr(A^T B)$.
%Given a vector $\sigma\in\R^n$, $\diag(\sigma)$ denotes a diagonal matrix with $\sigma_i$ on its $i$th diagonal.
%
The $\sign(\cdot)$ function is defined as
$\sign(\alpha)=1$ if $\alpha>0$, $-1$ if $\alpha<0$ and $0$ if $\alpha=0$;
for vectors, it applies element-wise.

\section{Derivation of BCOS}
\label{sec:derivation}

In this section, we first derive the ideal optimal stepsizes for block-coordinate update, which unfortunately is not computable in practice; then we make several simplifications and approximations to derive a practical algorithm.

\subsection{Block-coordinate optimal stepsizes}
We consider the distance of the next iterate~$x_{t+1}$ to an (unknown) target point $x_*$. Specifically, in terms of the squared Euclidean distance, 
\begin{align*}
\|x_{t+1}-x_*\|^2
&= \|x_t -  s_t\odot d_t - x_*\|^2 \\
&= \|x_t-x_*\|^2 - 2\langle x_t-x_*, s_t\odot d_t\rangle + \| s_t\odot d_t\|^2 .
\end{align*}
Exploiting the block partitions of $x_t$, $s_t$ and $d_t$ (Section~\ref{sec:notations}) and $s_{t,k}=\gamma_{t,k}\ones_{n_k}$, we obtain 
\begin{align*}
\|x_{t+1}-x_*\|^2
&= \|x_t-x_*\|^2 + \sum_{k=1}^m \left(-2 \gamma_{t,k}\langle x_{t,k}-x_{*,k},\,d_{t,k}\rangle + \gamma_{t,k}^2\|d_{t,k}\|^2\right).
\end{align*}
Taking expectation conditioned on the realization of all random variables up to $x_t$, i.e.,  
\begin{equation}\label{eqn:cond-expect}
\E_t[\cdot]:=\E[\cdot|x_0,d_0,x_1,d_1, \ldots, x_t],
\end{equation}
we have
\begin{equation}\label{eqn:expected-dist}
\E_t\!\left[\|x_{t+1}-x_*\|^2\right]
= \|x_t-x_*\|^2 + \sum_{k=1}^m\Bigl(-2 \gamma_{t,k}\bigl\langle x_{t,k}-x_{*,k},\,\E_t[d_{t,k}]\bigr\rangle +  \gamma_{t,k}^2\E_t\!\left[\|d_{t,k}\|^2\right]\Bigr).
\end{equation}
In order to minimize the expected distance from $x_{t+1}$ to $x_*$, we can minimize the right-hand side of~\eqref{eqn:expected-dist} over the stepsizes $\{\gamma_{t,k}\}_{k=1}^m$. 
This can be done separately for each block by minimizing a simple quadratic, resulting in the optimal stepsizes
\begin{equation}\label{eqn:optimal-stepsizes}
\widehat\gamma_{t,k} = \frac{\langle x_{t,k}-x_{*,k},\, \E_t[d_{t,k}]\rangle}{\E_t[\|d_{t,k}\|^2]}, \qquad k=1,\ldots,m.
\end{equation}
Notice that these optimal stepsizes can be positive or negative, depending on the sign of the inner product in the numerator. 

Plugging the optimal stepsizes into~\eqref{eqn:expected-dist}, we have
\[
\E_t\!\left[\|x_{t+1}-x_*\|^2\right] = \|x_t-x_*\|^2
-\sum_{k=1}^m \frac{\left\langle x_{t,k}-x_{*,k},\,\E_t[d_{t,k}]\right\rangle^2}{\E_t[\|d_{t,k}\|^2]}.
\]
Therefore, we have a contraction of expected distance to~$x_*$ unless 
$$\langle x_{t,k}-x_{*,k},\,\E_t[d_{t,k}]\rangle=0,\qquad k=1,\ldots,m,$$
which means either $x_{t,k}=x_{*,k}$ or $\E_t[d_{t,k}]$ is orthogonal to $x_{t,k}-x_{*,k}$ simultaneously for all~$k$.
The latter case means that the random direction $d_t$ contains no information that can help reduce the distance. 

Apparently, the optimal stepsizes in~\eqref{eqn:optimal-stepsizes} are not computable in practice, because we do not have access to $x_*$ and cannot evaluate the expectations precisely. 
In the next section, we explain how to make approximations and derive a practical stepsize rule. 

\subsection{Simplification and approximation}
\label{sec:simplify-bcos}

We first try to avoid the direct reliance on~$x_*$, which is unknown to us.
To this end, rewrite the numerator in~\eqref{eqn:optimal-stepsizes} as
\[
\bigl\langle x_{t,k}-x_{*,k},\, \E_t[d_{t,k}]\bigr\rangle
 =\|x_{t,k}-x_{*,k}\| \|\E_t[d_{t,k}]\|\cos \theta_{t,k}, 
\]
where $\theta_{t,k}$ is the angle between the two vectors $x_{t,k}-x_{*,k}$ and $\E_t[d_{t,k}]$. 
We absorb the quantities related to $x_*$ into a tunable parameter
\begin{equation}\label{eqn:alpha-approx}
\alpha_{t,k}=\|x_{t,k}-x_{*,k}\|\cos\theta_{t,k},
\end{equation}
which leads to the stepsizes
\begin{equation}\label{eqn:bcos-1st-approx}
\widetilde\gamma_{t,k} = \frac{\alpha_{t,k}\|\E_t[d_{t,k}]\|}{\E_t[\|d_{t,k}\|^2]},
\qquad k=1,\ldots,m.
\end{equation}
Notice that any $\alpha_{t,k}$ we choose in practice may only be a (very rough) approximation of $\|x_{t,k}-x_{*,k}\|\cos\theta_{t,k}$.
In particular, while the optimal stepsizes $\widehat\gamma_{t,k}$ can be positive or negative, it is very hard to even correctly estimate the sign of the inner product
$\langle x_{t,k}-x_{*,k},\, \E_t[d_{t,k}]\rangle$.
We take the pragmatic approach of restricting $\alpha_{t,k}>0$, effectively being \emph{optimistic} that the expected direction $-\E_t[d_{t,k}]$ is always pointing towards $x_{*,k}$ (i.e., assuming $\cos\theta_{t,k}>0$).
%Therefore we call the resulting $\widetilde\gamma_{t,k}$ \emph{optimistic stepsizes}.

A further simplification is to use a common stepsize \emph{schedule} $\alpha_t$ across all blocks, i.e., $\alpha_{t,k}=\alpha_t$ for all $k=1,\ldots,m$.
This can be a reasonable choice for training deep neural networks, where the model parameters are initialized randomly coordinate-wise such that $\E[\|x_{0,k}\|]$ is constant for each coordinate $k$ \citep{HeZhangRenSun2015,RobertsYaidaHanin2022,DinanYaidaZhang2023}.
This brings us to 
\begin{equation}\label{eqn:bcos-conceptual}
\widetilde\gamma_{t,k} = \frac{\alpha_t\|\E_t[d_{t,k}]\|}{\E_t[\|d_{t,k}\|^2]},
\qquad k=1,\ldots,m.
\end{equation}
We note that with some abuse of notation, here $\alpha_t$ denotes a scalar, not a vector of $(\alpha_{t,1},\ldots,\alpha_{t,k})$.
This simplification motivates us to link $\alpha_t$ to the average of $\alpha_{t,k}$ in~\eqref{eqn:alpha-approx} and further relate to the overall distance $\|x_t-x_*\|$. 
Therefore, we expect~$\alpha_t$ to decrease as $\|x_t-x_*\|$ gradually shrinks. 
A simple strategy is to use a monotonic stepsize schedule on~$\alpha_t$, such as the popular cosine decay \citep{loshchilov2016sgdr} or linear decay \citep{defazio2024optimallineardecaylearning}.

Next, we need to replace the conditional expectations $\E_t[d_{t,k}]$ and $\E_t[\|d_{t,k}\|^2]$ in~\eqref{eqn:bcos-conceptual} with computable approximations. 
We adopt the conventional approach of \emph{exponential moving average} (EMA):
\begin{equation}\label{eqn:ema-estimators}
\begin{aligned}
u_{t,k} &= \beta u_{t-1,k} + (1-\beta) d_{t,k}, \\
v_{t,k} &= \beta v_{t-1,k} + (1-\beta) \|d_{t,k}\|^2,
\end{aligned}
\end{equation}
where $\beta\in[0,1)$ is the smoothing factor of EMA.
Notice that $u_{t,k}\in\R^{n_k}$ and $v_{t,k}\in\R$,
which have the same dimensions as $\E_t[d_{t,k}]$ and $\E_t[\|d_{t,k}\|^2]$ respectively.
This leads to a set of practical stepsizes:
\begin{equation}\label{eqn:bcos-uv}
\gamma_{t,k} = \alpha_t \frac{\|u_{t,k}\|}{v_{t,k}+\epsilon}, 
\qquad k=1,\ldots,m,
\end{equation}
where we added a small constant $\epsilon>0$ in the denominator to improve numerical stability (preventing overflow in case $v_{t,k}$ is too small).

\subsection{BCOS with one EMA estimator}
\label{sec:bcos-v}

The BCOS stepsizes in~\eqref{eqn:bcos-uv} are easy to implement in practice, but they are computed through the ratios of two online estimators $\|u_{t,k}\|$ and $v_{t,k}$.  The ratio of two random quantities may be susceptible to large fluctuations if the numerator and denominator drift in different directions (i.e., when one is much larger than its mean while the other is much smaller than its mean).
To avoid this problem, we further simplify BCOS to use only one EMA estimator.

First, recall the mean-variance decomposition of the conditional second moment:
\[
\E_t[\|d_{t,k}\|^2] = \|\E_t[d_{t,k}]\|^2 + \E_t[\|d_{t,k}-\E_t[d_{t,k}]\|^2] = \|\E_t[d_{t,k}]\|^2 + \Var_t(d_{t,k}),
\]
where $\Var_t(d_{t,k})$ denotes the conditional variance of $d_{t,k}$.
We interpret $\|\E_t[d_{t,k}]\|^2$ as the signal power and $\Var_t(d_{t,k})$ as the noise power, and define the \emph{signal fraction} (SiF) 
\begin{equation}\label{eqn:SiF-block}
\rho_{t,k} = \frac{\|\E_t[d_{t,k}]\|^2}{\E_t[\|d_{t,k}\|^2]}
= \frac{\|\E_t[d_{t,k}]\|^2}{\|\E_t[d_{t,k}]\|^2+\Var_t(d_{t,k})}.
\end{equation}
Apparently we have $\rho_{t,k}\in[0,1]$, and the two boundary values of~$0$ and~$1$ correspond to the cases of zero signal and full signal (zero noise) respectively.

With the definition of SiF, we can decompose the stepsizes in~\eqref{eqn:bcos-1st-approx} as
\[
\widetilde\gamma_{t,k} =
\alpha_{t,k}\frac{\|\E_t[d_{t,k}]\|}{\E_t[\|d_{t,k}\|^2]}
=\alpha_{t,k} \sqrt{\frac{\|\E_t[d_{t,k}]\|^2}{\E_t[\|d_{t,k}\|^2]}}
\frac{1}{\sqrt{\E_t[\|d_{t,k}\|^2]}}
=\frac{\alpha_{t,k} \sqrt{\rho_{t,k}}}{\sqrt{\E_t[\|d_{t,k}\|^2]}}.
\]
Now we can merge $\sqrt{\rho_{t,k}}\in[0,1]$ into the tunable parameters $\alpha_{t,k}$ and define 
$$\alpha'_{t,k}:=\alpha_{t,k}\sqrt{\rho_{t,k}}.$$
Then, following the same arguments as in Section~\ref{sec:simplify-bcos}, we choose to use a common stepsize schedule $\alpha'_t$ and obtain the practical stepsize rule
\begin{equation}\label{eqn:bcos-v}
\gamma_{t,k} = \alpha'_t \frac{1}{\sqrt{v_{t,k}+\epsilon}},
\qquad k=1,\ldots,m,
\end{equation}
where $v_{t,k}$ is the EMA estimator of $\E_t[\|d_{t,k}\|^2]$ given in~\eqref{eqn:ema-estimators}.

Another way to avoid using two estimators is to assimilate $\|\E_t[d_{t,k}]\|$, instead of $\sqrt{\rho_{t,k}}$, directly into the tunable parameter $\alpha'_{t,k}$ and ultimately $\alpha'_t$. This would result in
\begin{equation}\label{eqn:bcos-v2}
\gamma_{t,k} = \alpha'_t \,\frac{1}{v_{t,k}+\epsilon},
\qquad k=1,\ldots,m.
\end{equation}
There is a major advantage of~\eqref{eqn:bcos-v} over~\eqref{eqn:bcos-v2}.
Specifically, since the SiF $\rho_{t,k}$ is a dimensionless ratio, the per-iteration displacements,
\[
x_{t+1,k}-x_{t,k} = \alpha'_t \frac{d_{t,k}}{\sqrt{v_{t,k}+\epsilon}},
\]
are invariant of scaling of the search directions $d_{t,k}$, which makes the tuning of $\alpha'_t$ much easier in practice.
%We note that the two-estimator version of BCOS in~\eqref{eqn:bcos-uv} also has this invariance property, but the one in~\eqref{eqn:bcos-v2} does not.
This invariance property is also shared by Adam \citep{KingmaBa2014adam}, as seen from~\eqref{eqn:adam-stepsize} and~\eqref{eqn:adam-v}.
The similarity of Adam and the simplified BCOS in~\eqref{eqn:bcos-v} is apparent, and we will explain their connection in detail in the next section.

\iffalse
The above discussion also sheds light on the difference between BCOS/Adam and other coordinate-wise stepsize rules based on diagonal preconditioning.
Since the stochastic optimization problem~\eqref{eqn:stoch-opt} is essentially minimizing an infinite sum, the natural approach of preconditioning follows from the Gauss-Newton method rather than quasi-Newton type of methods.  
XXX Interpretation of the importance of square-root in the denominator. Compare with Gauss-Newton type of stepsizes and some recent proposals to remove the square-root. \citep{Bottou, Sophia, NGN}
In the convex case, AdaGrad, motivated by global convergence rates!
Nonconvex cases, mostly rely on the estimation of local curvature, mostly based on the Gauss-Newton method.
\fi

\section{Instantiations of BCOS}
\label{sec:instantiation}

The derivation of BCOS in Section~\ref{sec:derivation} is carried out with a general search direction $d_t$.
In this section, we instantiate BCOS with two common choices of the search direction: the stochastic gradient and its EMA, also known as stochastic momentum. 
% \todo{TJ: removed emph on ``stochastic momentum" since it was alr emphed on page 2}

To simplify presentation, we focus on the case of single-coordinate blocks, i.e., $m=n$ and $\mathcal{I}_k=\{k\}$ for $k=1,\ldots,n$.
%and $s_t=\gamma_t\in\R^n$.
Then we can write the coordinate-wise EMA estimators $v_{t,k}$ collectively in a vector form:
\begin{equation}\label{eqn:vector-ema}
v_t = \beta v_{t-1} + (1-\beta) d_t^2,
\end{equation}
where $d_t^2$ denotes the element-wise squared vector $d_t\odot d_t$.
We also have $s_t=\gamma_t\in\R^n$ and 
\[
x_{t+1}=x_t-\gamma_t\odot d_t
\]
where the vector of coordinate-wise stepsizes can be expressed as 
% \todo{Move $\epsilon$ under square root, to be consistent with proof!}
\begin{equation}\label{eqn:bcos-v-coord}
\gamma_t = \alpha_t \frac{1}{\sqrt{v_t+\epsilon}}
\end{equation}
Here $v_t+\epsilon$ means element-wise addition of~$\epsilon$, 
$\sqrt{v_t}+\epsilon$ denotes element-wise square root,
and the fraction represents element-wise division or reciprocal.
%Again, the stepsize schedule $\alpha_t$ is a scalar.
We no longer distinguish between $\alpha_t$ and $\alpha'_t$ because they are both tunable hyper-parameters.

\begin{figure}[t]
\begin{minipage}[t]{0.47\linewidth}
\begin{algorithm}[H]
\caption{BCOS-g (RMSprop)}
\label{alg:bcos-g}
\begin{algorithmic}[0]
\setstretch{1.4}
\State \textbf{input:} $x_0$, $\{\alpha_t\}_{t\geq 0}$, $\beta\in[0,1)$, $\epsilon>0$
%\State $v_{-1}=g_0^2$
\State $v_{-1}=\nabla f(x_0, \xi_0)^2$
\For{$t=0, 1,2, \ldots$}
  \State $g_t = \nabla f(x_t,\xi_t)$
  \State $v_t = \beta v_{t-1} + (1-\beta) g_t^2$
  %\State $x_{t+1} = x_t - \alpha_t \frac{1}{\sqrt{v_t + \epsilon}} \odot g_t$
  \State $\displaystyle x_{t+1} = x_t - \alpha_t \frac{g_t}{\sqrt{v_t + \epsilon}}$
\EndFor
\end{algorithmic}
\end{algorithm}
\end{minipage}\hfill
\begin{minipage}[t]{0.47\linewidth}
\begin{algorithm}[H]
\caption{BCOS-m}
\label{alg:bcos-m}
\begin{algorithmic}[0]
\setstretch{1.4}
\State \textbf{input:} $x_0$, $\{\alpha_t\}$, $\beta_1,\beta_2\in[0,1)$, $\epsilon>0$
%\State $m_{-1} = g_0$, ~$v_{-1}=g_0^2$
\State $m_{-1} = \nabla f(x_0,\xi_0)$
\State $v_{-1}=m_{-1}^2$
\For{$t=0, 1,2, \ldots$}
  \State $g_t = \nabla f(x_t,\xi_t)$
  \State $m_t = \beta_1 m_{t-1} + (1-\beta_1) g_t$
  \State $v_t = \beta_2 v_{t-1} + (1-\beta_2) \textcolor{blue}{m_t^2}$
  \State $\displaystyle x_{t+1} = x_t - \alpha_t \frac{m_t}{\sqrt{v_t + \epsilon}}$
\EndFor
\end{algorithmic}
\end{algorithm}
\end{minipage}
\end{figure}

\todo{Cite Robustness to Unbounded Smoothness of Generalized
SignSGD and its citations for BCOS-m}

\subsection{BCOS with EMA estimator}
\label{sec:bcos-ema}

We first present instantiations of BCOS with an EMA estimator (derived in Section~\ref{sec:bcos-v}).
%with an EMA estimator for the second moment of the search direction.

\paragraph{BCOS-g}
Algorithm~\ref{alg:bcos-g} is the instantiation of BCOS using $g_t=\nabla f(x_t,\xi_t)$ as the search direction, 
where $g_t^2$ denotes element-wise square 
%The vector $v_t$ consists of coordinate-wise EMA estimators for $\E[g_{t,k}^2]$
and $\frac{g_t}{\sqrt{v_t} + \epsilon}$ means element-wise division.
We call it BCOS-g to signify the use of gradient as search direction.

We immediately recognize that BCOS-g is exactly the RMSprop algorithm \citep{rmsprop2012lecture}, which is one of the first effective algorithms for training deep neural networks.
Our BCOS framework gives a novel interpretation of RMSprop and its effectiveness. 
In the special case with $\beta=0$ and $\epsilon=0$, we have $v_t=g_t^2$, and BCOS-g becomes the sign gradient method 
\begin{equation}\label{eqn:sign-gradient}
x_{t+1} = x_t - \alpha_t\, \sign(g_t),
\end{equation}
which also received significant attention in the literature 
\citep{Polyak1973pseudo,bernstein2018signsgd,safaryan2021signdescent,jiang2024efficient}.

\paragraph{BCOS-m}
Using the stochastic momentum as search direction has a long history in stochastic approximation \citep[e.g.,][]{GupalBazhenov72,Polyak77comparison,RuszczynskiSyski83stats}.
It has become the default option for modern deep learning due to its superior performance compared with using plain stochastic gradients.
In Algorithm~\ref{alg:bcos-m}, we use $m_t$ to denote the momentum, which is a standard notation in the machine learning literature.
We call it BCOS-m to signify the use of momentum as the search direction.
BCOS-m employs a second smoothing factor~$\beta_2$ to calculate the EMA of~$m_t^2$. The two smoothing factors $\beta_1$ and $\beta_2$ do not need to be the same and can be chosen independently in practice.
Similar to BCOS-g, the special case of BCOS-m with $\beta_2=0$ and $\epsilon=0$ corresponds to the sign-momentum method 
\begin{equation}\label{eqn:sign-momentum}
x_{t+1} = x_t - \alpha_t\, \sign(m_t).
\end{equation}

We notice that BCOS-m is very similar to Adam as given in~\eqref{eqn:adam-stepsize} and~\eqref{eqn:adam-v}.
The difference is that in Adam, $v_t$ is the EMA of~$g_t^2$ instead of~$m_t^2$.
From the perspective of the BCOS framework, Adam has a mismatch between the search direction $m_t$ and the second moment estimator based on $g_t^2$, which must be compensated for by using a larger smoothing factor~$\beta_2$ (because $m_t$ itself is a smoothed version of $g_t$).  
For BCOS-m, using $\beta_2=\beta_1$ produces as good performance as Adam with the best tuned $\beta_2$ (see numerical experiments in Section~\ref{sec:experiments}).

\bigskip

In both BCOS-g and BCOS-m, we initialize $v_{-1}=g_0^2$, therefore $v_0=g_0^2$ as well. This avoids the initial bias caused by setting $v_{-1}=0$. 
An alternative way for bias correction is to initialize $m_{-1}$ and $v_{-1}$ as the zero vector, and then multiply $m_t$ and $v_t$ by $1/(1-\beta^{t+1})$, which quickly converges to~$1$, as done in Adam~\citep{KingmaBa2014adam}. Our numerical experiments show that these two choices do not have significant differences in practice.

\subsection{BCOS with conditional estimator}
\label{sec:bcos-conditional}

Recall that the derivations of optimal stepsizes $\widehat{\gamma}_{t,k}$ and $\widetilde{\gamma}_{t,k}$ in Section~\ref{sec:derivation} are all based on \emph{conditional} expectation $\E_t[\cdot]$.
In Section~\ref{sec:bcos-ema}, we used coordinate-wise EMA to approximate the conditional expectations, specifically, $v_t$ as an estimator of $\E_t[d_t^2]$ in BCOS-g and an estimator of $\E_t[m_t^2]$ in BCOS-m.  In this section, we show that with $m_t$ as the search direction, we can exploit its update form to derive more effective \emph{conditional estimators}.

We first repeat the definition of momentum with a smoothing factor $\beta\in[0,1)$, i.e., 
\begin{equation}\label{eqn:momentum}
m_t = \beta m_{t-1} + (1-\beta) g_t.
\end{equation}
To derive an estimator of $\E_t[m_t^2]$, we expand the square and take the conditional expectation of each term:
\begin{align}
\E_t\bigl[m_t^2\bigr] 
&= \E_t\bigl[(\beta m_{t-1}+(1-\beta) g_t)^2\bigr] \nonumber\\
&= \beta^2 \E_t\bigl[m_{t-1}^2\bigr] + 2\beta(1-\beta)\E_t[ m_{t-1}\odot g_t]
+ (1-\beta)^2 \E_t\bigl[g_t^2\bigr]  \nonumber\\
&= \beta^2 m_{t-1}^2 + 2\beta(1-\beta)m_{t-1}\odot \E_t[g_t]
+ (1-\beta)^2 \E_t\bigl[g_t^2\bigr],
\label{eqn:Et-m2}
\end{align}
where we used the fact $\E_t[m_{t-1}^2]=m_{t-1}^2$ and $\E_t[m_{t-1}]=m_{t-1}$ thanks to the definition of $\E_t[\cdot]$ in~\eqref{eqn:cond-expect}.
It remains to approximate $\E_t[g_t]$ and $\E_t[g_t^2]$.

Clearly a good estimator for $\E_t[g_t]$ is~$m_t$.
To approximate $\E_t[g_t^2]$, we could use a separate EMA estimator 
$v'_t=\beta'v'_{t-1}+(1-\beta')g_t^2$, 
but this introduces another algorithm state $v'_t$ and a second smoothing factor~$\beta'$.
Meanwhile, we observe that the factor $(1-\beta)^2$ multiplying $\E_t[g_t^2]$ is usually very small, especially for $\beta$ close to~1.
As a result, any error in approximating $\E_t[g_t^2]$ is attenuated by a very small factor and may not cause much difference.
Therefore, for simplicity, we choose to approximate $\E_t[g_t^2]$ with $g_t^2$ itself. 
Combining with approximating $\E_t[g_t]$ with $m_t$, we arrive at the following \emph{conditional} estimator for $\E_t[m_t^2]$:
\begin{equation}\label{eqn:v-conditional}
v_t = \beta^2 m_{t-1}^2 + 2\beta(1-\beta) m_{t-1} \odot m_t + (1-\beta)^2 g_t^2.
\end{equation}
The estimator in~\eqref{eqn:v-conditional} does not need to store $v_{t-1}$, thus requiring less memory (fewer optimizer states). Moreover, this estimator eliminates $\beta_2$ as a second hyper-parameter. 

\begin{algorithm}[t]
        \caption{BCOS-c}
        \label{alg:bcos-c}
        \begin{algorithmic}[0]
        \setstretch{1.4}
        \State \textbf{input:} $x_0$, $\{\alpha_t\}$, $\beta\in[0,1)$, $\epsilon> 0$, 
        \State $m_{-1} = \nabla f(x_0, \xi_0)$ 
        \For{$t=0, 1,2, \ldots$}
          \State $g_t = \nabla f(x_t,\xi_t) $ 
          \State $m_t = \beta m_{t-1} + (1-\beta) g_t$
          %\State Conditional estimator: 
          \State $v_t = \beta^2 m^2_{t-1} + 2 \beta (1-\beta) m_{t-1} \odot m_t + (1-\beta)^2 g^2_t$ 
          \State $\Big($Simple alternative: $v_t = \bigl(1-(1-\beta)^2\bigr) m^2_{t-1} + (1-\beta)^2 g^2_t$ $\Big)$
          % \State $v_t = \bigl(1-(1-\beta)^2\bigr) m^2_{t-1} + (1-\beta)^2 g^2_t$
        %  \State $v_t = \beta^2 m_{t-1}^2 + 2\beta(1-\beta)m_{t-1} \odot m_t + (1-\beta)^2 g_t^2$
          \State $\displaystyle x_{t+1} = x_t - \alpha_t \frac{m_t}{\sqrt{v_t + \epsilon}}$
        \EndFor
        \end{algorithmic}
\end{algorithm}

Replacing $v_t$ in BCOS-m with~\eqref{eqn:v-conditional} leads to Algorithm~\ref{alg:bcos-c}. We call it BCOS-c to signify the \emph{conditional} estimator. 

\iffalse
\begin{algorithm}[t]
        \caption{BCOS\textcolor{blue}{W}-c}
        \label{alg:bcosw-c}
        \begin{algorithmic}[0]
        \setstretch{1.4}
        \State \textbf{input:} $x_0$, $\{\alpha_t\}$, $\beta\in[0,1)$, $\epsilon> 0$, \textcolor{blue}{$\lambda\geq 0$}
        %\State $m_{-1} = g_0$ 
        \State $m_{-1} = \nabla f(x_0, \xi_0)$ 
        \For{$t=0, 1,2, \ldots$}
          \State $g_t = \nabla f(x_t,\xi_t) $ 
          \State $m_t = \beta m_{t-1} + (1-\beta) g_t$
          \State $v_t = \beta^2 m^2_{t-1} + 2 \beta (1-\beta) m_{t-1} \odot m_t + (1-\beta)^2 g^2_t$ 
          %\State $v_t = (3-2\beta)\beta^2 m_{t-1}^2 + 2\beta(1-\beta)^2 m_{t-1}\odot g_t + (1-\beta)^2 g_t^2$
          \State $\Big($Simple alternative: $v_t = \bigl(1-(1-\beta)^2\bigr) m^2_{t-1} + (1-\beta)^2 g^2_t$ $\Big)$
          \State $\displaystyle x_{t+1} = \textcolor{blue}{(1-\alpha_t\lambda)}x_t - \alpha_t \frac{m_t}{\sqrt{v_t + \epsilon}}$
        \EndFor
        \end{algorithmic}
\end{algorithm}
\fi

% NO NEED TO EXPLAIN HERE
%We note the slight abuse of notation in using the symbol $v_t$ for both the EMA estimator in BCOS-g and -m and the conditional estimator here. In general, $v_t$ denotes an online estimator of $\E_t[d_t^2]$. The corresponding choice of~$d_t$ and type of estimator should be clear from the context.

The estimator in~\eqref{eqn:v-conditional} can be further simplified if we approximate $\E_t[g_t]$ in~\eqref{eqn:Et-m2} with $m_{t-1}$ instead of~$m_t$, which results in
\begin{align}
v_t &= \beta^2 m_{t-1}^2 + 2\beta(1-\beta) m_{t-1}^2  + (1-\beta)^2 g_t^2 \nonumber \\
& = \left(1-(1-\beta)^2\right) m^2_{t-1} + (1-\beta)^2 g^2_t .
\label{eqn:v-cond-simple}
\end{align}
This estimator \emph{resembles} the standard EMA estimator in Adam, shown in~\eqref{eqn:adam-v}, with an effective smoothing factor
\begin{equation}\label{eqn:v-cond-simple-beta}
\beta'=1-(1-\beta)^2,
\end{equation}
\emph{but with $v_{t-1}$ replaced by $m^2_{t-1}$.} This also explains that the second smoothing factor in Adam, $\beta_2$, corresponding to~$\beta'$ here, should be much larger or closer to~1 than~$\beta$.
Specifically, $\beta=0.9$ roughly corresponds to $\beta'=0.99$. 

We include the estimator~\eqref{eqn:v-cond-simple} as a simple alternative of~\eqref{eqn:v-conditional} in Algorithm~\ref{alg:bcos-c}. 
Empirically we found that it behaves almost identically as~\eqref{eqn:v-conditional} on the experiments we conduct in Section~\ref{sec:experiments}. 
Concurrently, \citet{zhang2025adams} propose a similar algorithm with 
\[
v_t = \beta' m_{t-1}^2 + (1-\beta') g_t^2, 
\]
and show that tuning $\beta'$ as a hyper-parameter can obtain slightly better performance than using the value in~\eqref{eqn:v-cond-simple-beta}.
% \todo{We should compare tuning $\beta'$ with BCOS-c on larger models. But don't have time now.}

%%%%%%%%%%%%%%%%%%%%%%%%%%%%%%%%%%%%%%

\begin{algorithm}[t]
        \caption{BCOSW\{-g, -m, -c\}}
        \label{alg:bcosw-c}
        \begin{algorithmic}[0]
        \setstretch{1.4}
        \State \textbf{input:} $x_0$, $\{\alpha_t\}$, $\beta\in[0,1)$, $\epsilon> 0$, \textcolor{blue}{$\lambda\geq 0$}
        \For{$t=0, 1,2, \ldots$}
          \State compute $d_t$ ($g_t$ or $m_t$) and $v_t$ as in Algorithm~\ref{alg:bcos-g}, \ref{alg:bcos-m} or \ref{alg:bcos-c} (-g, -m or -c, respectively)
          \State $\displaystyle x_{t+1} = \textcolor{blue}{(1-\alpha_t\lambda)}x_t - \alpha_t \frac{d_t}{\sqrt{v_t + \epsilon}}$
        \EndFor
        \end{algorithmic}
\end{algorithm}

\subsection{BCOS with decoupled weight decay}
\label{sec:bcosw}

Weight decay is a common practice in training deep learning models to obtain better generalization performance.
It can be understood as adding an $L_2$ regularization to the loss function, i.e., minimizing the regularized loss 
\begin{equation}\label{eqn:l2-regu}
F(x)=\E_\xi[f(x,\xi)]+\frac{\lambda}{2}\|x\|^2,
\end{equation}
where $\lambda>0$ is a regularization parameter.
Effectively, the stochastic gradient at $x_t$ becomes
$\nabla f(x_t,\xi_t)+\lambda x_t$.
We can apply the BCOS family of algorithms to incorporate the regularization term by simply replacing $g_t=\nabla f(x_t,\xi_t)$ with $g_t=\nabla f(x_t,\xi_t)+\lambda x_t$.

A more effective way in practice is to use \emph{decoupled weight decay} as proposed in the AdamW algorithm \citep{Loshchilov2019AdamW}. 
Specifically, we apply weight decay separately in the BCOS update: 
\[
x_{t+1} = x_t - \gamma_t\odot d_t - \alpha_t\lambda x_t =(1-\alpha_t\lambda)x_t - \gamma_t\odot d_t.
\]
We call the resulting method BCOSW following the naming convention of AdamW.

Algorithm~\ref{alg:bcosw-c} shows the BCOSW framework, which can be instantiated with specific $d_t$ and $v_t$ as in BCOS-g, -m or -c.
A PyTorch implementation of all variants is available at 
% \todo{need to update the repo to include the non-simplified BCOS-c}
\centerline{\url{https://github.com/facebookresearch/bcos}.}

%For implementation (without needing both $m_{t-1}$ and $m_t$ in memory):
%\[
%v_t = (3-2\beta)m_t^2 - 2(1-\beta)(2-\beta)m_t\odot g_t + 2(1-\beta)^2 g_t^2
%\]
%or
%\[
%v_t = (3-2\beta)\beta^2 m_{t-1}^2 + 2\beta(1-\beta)^2 m_{t-1}\odot g_t + (1-\beta)^2g_t^2
%\]

%%%%%%%%%%%%%%%%%%%%%%%%%%%%%%%%%%%%%%

\section{Numerical experiments}
\label{sec:experiments}

%cosine decay
%\[
%\alpha_t = \alpha_0 \frac{\cos (\pi t/T)+1}{2}
%\]

In this section, we present preliminary experiments to compare BCOS with Adam, specifically their variants with decoupled weight decay: AdamW and BCOSW.
Among the BCOSW family, we focus on BCOSW-c. 

Our first set of experiments is conducted on training a small GPT2 model with 124 million parameters \citep{radford2019language} on the OpenWebText dataset \citep{Gokaslan2019OpenWeb}.
We use global batch size 512 and run all experiments for 100k iterations.
We use linear warmup on the stepsize schedule $\{\alpha_t\}$ to reach $\alpha_\text{max}$ in the first 2k iterations, and then use cosine decay to toward the final stepsize $\alpha_\text{min}=0.01\alpha_\text{max}$.
The default hyper-parameters are chosen (based on a coarse sweep) as: peak stepsize $\alpha_\text{max}=0.002$, $\epsilon=10^{-12}$ and weight decay $\lambda=0.1$. 

\begin{figure}[t]
    \centering
\ifloadfigs
    \input{figs/adamw_bcosw_beta.tex}
\else
    \includegraphics[width=0.99\textwidth]{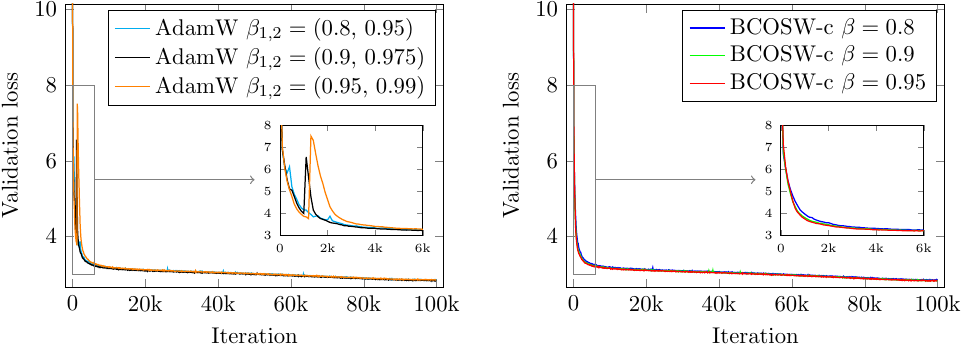}
\fi
\caption{Comparing AdamW and BCOSW-c with different momentum parameters.}
\label{fig:adamw-bcosw-beta}
\end{figure}

Figure~\ref{fig:adamw-bcosw-beta} (left) shows the test loss of AdamW on the GPT2 task with different combinations of~$\beta_1$ and~$\beta_2$. 
For each value of $\beta_1\in\{0.8, 0.9, 0.95\}$, we choose the best $\beta_2$ after sweeping $\beta_2\in\{0.8, 0.9, 0.95, 0.975, 0.99\}$. 
Their final losses achieved are all very close, around the value $2.82$. 
For most $(\beta_1,\beta_2)$ combinations, we observe some loss spikes, especially at the beginning of the training (as shown in the inset).
In contrast, Figure~\ref{fig:adamw-bcosw-beta} (right) shows that BCOSW-c obtains the same final loss but with very smooth loss curves.
%The corresponding training losses are very similar and shown in Figure~\ref{fig:adamw-bcosw-beta-train} in Appendix~\ref{sec:more-experiments}.

\begin{figure}[t]
    \centering
\ifloadfigs
    \input{figs/adamw_bcosw_gmc.tex}
\else
    \includegraphics[width=0.99\textwidth]{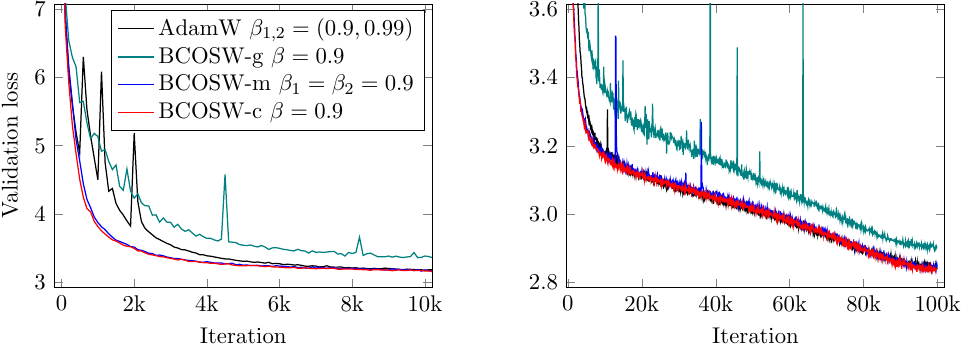}
\fi
\caption{Comparing AdamW and three variants of BCOSW. 
Left: the first 10k iterations; Right: all 100k iterations.
}
\label{fig:adamw-bcosw-gmc}
\end{figure}

%%%%%%%%%%%%%%%%%%%%%%%%%%%%%%%%%%%%%%
\begin{figure}[p]
    \centering
\ifloadfigs
    \input{figs/adam_bcos_wd.tex}
\else
    \includegraphics[width=0.97\textwidth]{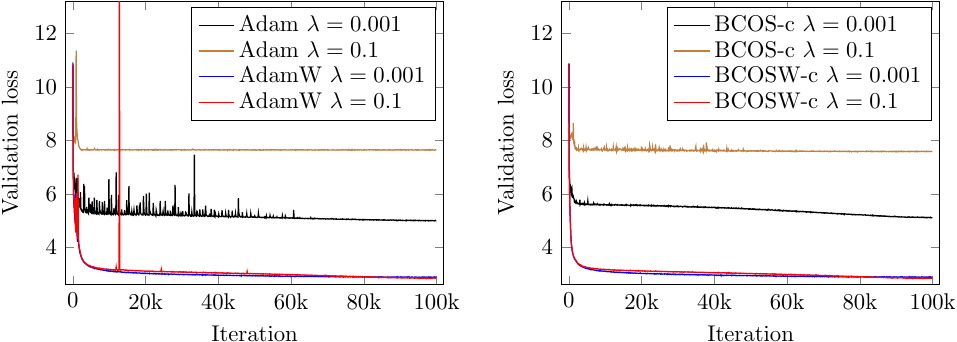}
\fi
\caption{Left: Adam(W) with $\beta_1=0.9$ and $\beta_2=0.99$. Right: BCOS(W)-c with $\beta=0.9$.}
\label{fig:adam-bcos-wd}
\vspace{2ex}
\end{figure}

%%%%%%%%%%%%%%%%%%%%%%%%%%%%%%%%%%%%%%
\begin{figure}[p]
    \centering
\ifloadfigs
  \input{figs/adam_bcosw_lr.tex}
\else
    \includegraphics[width=0.97\textwidth]{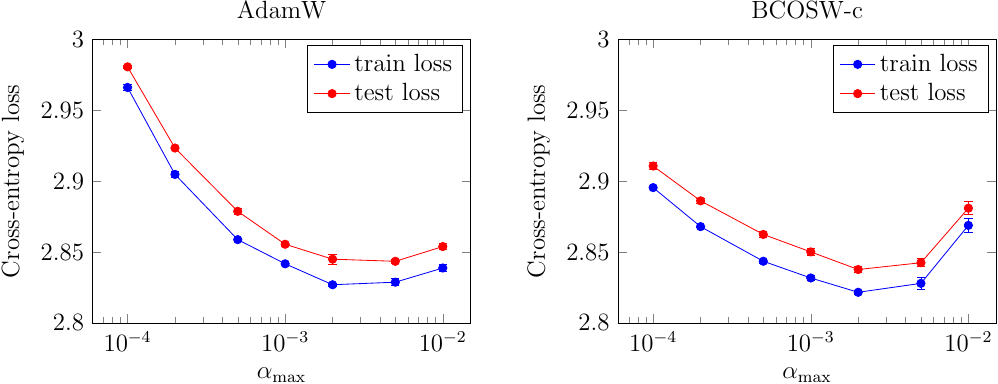}
\fi
\caption{Train and test loss by varying max value of stepsize schedule.}
\label{fig:adamw-bcosw-lr}
\vspace{2ex}
\end{figure}

%%%%%%%%%%%%%%%%%%%%%%%%%%%%%%%%%%%%%%
\begin{figure}[p]
    \centering
\ifloadfigs
    \input{figs/resnet_vit.tex}
\else
    \includegraphics[width=0.97\textwidth]{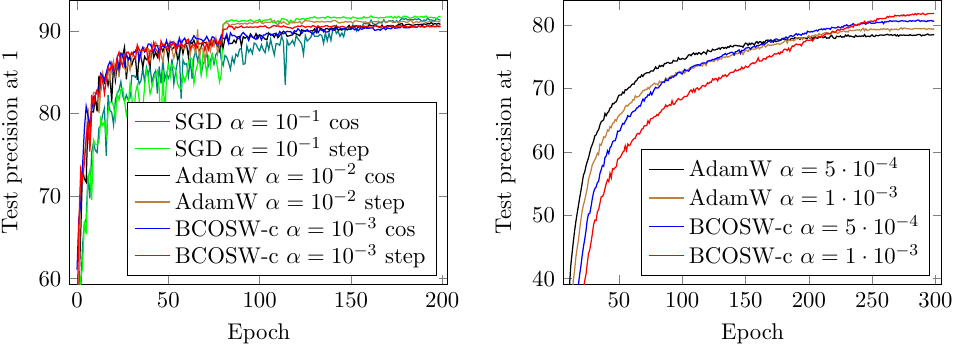}
\fi
\caption{Left: ResNet-20 on CIFAR10. Right: Vision Transformer on ImageNet.}
\label{fig:resnet-vit}
\end{figure}

Figure~\ref{fig:adamw-bcosw-gmc} compares AdamW against the three variants BCOSW-g, -m, and -c.
We observe that BCOSW-g is significantly worse than the momentum-based methods.
The loss curves for the momentum-based methods are all very close, but with spikes for both AdamW and BCOSW-m, while BCOSW-c has very smooth loss curves.

Figure~\ref{fig:adam-bcos-wd} illustrates the difference between algorithms with and without decoupled weight decay. 
For Adam and BCOS-c, the weight decay is accounted for in the gradient with $g_t=\nabla f(x_t, \xi_t)+\lambda x_t$. 
BCOS-c converges to much higher loss than BCOSW-c, and different values of $\lambda$ (weight decay) make a dramatic difference for BCOS-c but cause little change to BCOSW-c.
The same phenomenon happens for Adam versus AdamW, and we again observe more spikes from the loss curves of Adam(W) than BCOS(W)-c.
%(with different random seed than in Figures~\ref{fig:adamw-bcosw-beta} and~\ref{fig:adamw-bcosw-gmc}).

Figure~\ref{fig:adamw-bcosw-lr} shows how the final training and validation losses (after 100k iterations) vary with $\alpha_\text{max}$.
For each stepsize schedule, we repeat the training three times with different random seeds and plot the standard deviation as error bars, which are mostly invisible due to small variations among different runs.
We observe that AdamW and BCOSW obtain the lowest losses at similar values of $\alpha_\text{max}$. 
BCOSW has slightly better performance for smaller values of $\alpha_\text{max}$, but becomes more sensitive for larger values of $\alpha_\text{max}$.

Finally, in Figure~\ref{fig:resnet-vit}, we compare different algorithms for training ResNet-20 \citep{he2016resnet} on the CIFAR10 dataset \citep{krizhevsky2009learning}, and also training the Vision Transformer (ViT) \citep{touvron21vit} on the ImageNet dataset \citep{imagenet2009}. 
For the ResNet task, we tried both cosine decay (drop by factor 100) and step decay (drop by 10 at epochs 80, 120, 150).  
The hyper-parameters chosen are: $\beta=0.9$ for SGD and BCOSW-c, and $\beta_{1,2}=(0.9,\,0.99)$ for AdamW. 
We observe that the best-performing stepsize schedules are quite different for different methods. This prompts the need to tune hyper-parameters for BCOSW for different tasks even though it shares similar tuned hyper-parameters as AdamW on the GPT2 task. 

For training ViT on ImageNet, although the best tuned stepsize schedules are similar between AdamW and BCOSW, their training and test curves look quite different. 
Figure~\ref{fig:resnet-vit} (right) shows that the test precision curves for BCOSW-c rise slowly but reach slightly higher precision at the end.  

These preliminary experiments demonstrate that BCOSW-c can obtain competitive performance compared with the state-of-the-art method AdamW, but require less memory and fewer hyper-parameters to tune.
More comprehensive empirical studies are needed to fully understand its potential in training larger deep learning models.

%%%%%%%%%%%%%%%%%%%%%%%%%%%%

%Concurrently, \citep{zhang2025adams} proposed an identical Algorithm~\ref{alg:bcos-c} with the estimator in~\eqref{eqn:v-cond-simple}. In their version, the smoothing coefficient in $v_t$ can be decoupled from $\beta$. The experiments in~\citep{zhang2025adams} ranged from pre-training tasks including Llama2-7B, GPT2-small, -medium, -large, to RL post-traing tasks such as QWen2.5-3B and DeepSeek-R1-Distill-Llama-8B,  demonstrating the effectiveness of conditional estimators~\eqref{eqn:v-conditional} independently.
%\todo{Removed the discussion on simple-c and made the ugly version the prominent algorithm. Added concurrent work. In small models, (26) performs similarly as (25). But for larger model, the -c is better. New comments: the AdamS paper demonstrates that BCOS-c can be as good as Adam on Llama2-7B, so still working on larger models.} 

\section{Convergence analysis}
\label{sec:convergence}

In this section, we present convergence analysis for BCOS(W).
To simplify presentation, we stick with the setting of single-coordinate blocks. Specifically, we analyze the algorithm
\begin{equation}\label{eqn:bcosw-practical}
x_{t+1}=(1-\alpha_t\lambda) x_t -\gamma_t \odot d_t, 
\quad\text{where}\quad
\gamma_t=\frac{\alpha_t}{\sqrt{v_t+\epsilon}},
\end{equation}
%Here, stepsize $\gamma_t$ takes a slightly different form from the one used throughout Section~\ref{sec:instantiation}, where $\epsilon$ is outside the square-root in the denominator. The resulting difference between the two forms is negligible for corresponding choices of $\epsilon$, with the one in~\eqref{eqn:bcosw-practical} being squared (e.g., being $10^{-12}$ if the one in Section~\ref{sec:instantiation} is $10^{-6}$). We choose the form in~\eqref{eqn:bcosw-practical} for ease of presenting the analysis. 
%
where $d_t$ is a general search direction and $v_t\in\R_+^n$ is an online estimator of $\E_t[d_t^2]$.
Our analysis consists of two stages:
\begin{itemize}
\item
First, we analyze the convergence behavior of the \emph{conceptual} BCOS method
\begin{equation}\label{eqn:bcosw-conceptual}
x_{t+1} = (1-\alpha_t\lambda)x_t - \widetilde\gamma_t\odot d_t,
\quad\text{where}\quad
\widetilde\gamma_t=\frac{\alpha_t}{\sqrt{\E_t[d_t^2]}}.
\qquad
\end{equation}
It is called ``conceptual'' because we cannot compute $\E_t[d_t^2]$ exactly in practice. But its analysis is essential for understanding and analyzing the practical method.
Our analysis relies on an \emph{aiming condition}, which originates from the classical stochastic approximation literature 
\citep{RobbinsMonro51,Blum1954multidim,chung1954,Sakrison1966}.
Under this assumption, we establish \emph{almost sure} convergence of $x_t$ to a target point~$x_*$ and the $\cO(1/t)$ convergence rate of $\E[\|x_t-x_*\|^2]$.
\item
To analyze the practical BCOS method~\eqref{eqn:bcosw-practical}, we first bound the difference between the expected steps under the practical method and the conceptual method, i.e., 
\begin{equation}\label{eqn:step-diff}
\bigl|\E_t[\gamma_t\odot d_t] - \E_t[\widetilde\gamma_t\odot d_t]\bigr|
\leq \sigma_t \bigl|\E_t[\widetilde\gamma\odot d_t]\bigr| + \cO(\epsilon),
%+ \cO(\Var_t(v_t)),
\end{equation}
where $\leq$ denotes coordinate-wise inequalities between two vectors and $\sigma_t$ is a scalar that depends on the bias and variance of~$v_t$ as an online estimator of $\E_t[d_t^2]$.
We prove almost sure convergence of $x_t$ to a neighborhood of~$x_*$, where the radius of the neighborhood is determined by the magnitude of~$\sigma_t$ and~$\cO(\epsilon)$.
%the right-hand side of~\eqref{eqn:step-diff}.
% \todo{Need to change notation: $c_t$ to $\theta_t$, $\sigma_t$?}
\end{itemize}

The rest of this section is organized as follows.
In Section~\ref{sec:analysis-aiming}, we introduce the aiming condition and give some empirical evidence. 
% \todo{TJ: since we have moved the discussion on convexity to appendix, shall we change this to ``check the aiming condition empirially on GPT2"}
We present the convergence analysis for the conceptual BCOS method in Section~\ref{sec:analysis-conceptual} and then the analysis for the practical BCOS method in Section~\ref{sec:analysis-practical}. 
Our analysis is very general and applies to the whole BCOS family of methods, including the sign-gradient method, RMSprop and Adam(W). 
To obtain specific results for each method, we only need to estimate the bias and variance of the second-moment estimator $v_t$, which are presented in Section~\ref{sec:bias-var-tradeoff}.

\subsection{Aiming condition}
\label{sec:analysis-aiming}

First consider the stochastic approximation method~\eqref{eqn:general-sa}.
A typical set of assumptions in the classical stochastic approximation literature includes:
%\citep[e.g.,][]{Sakrison1966} includes: 
\begin{itemize} 
\item The random variables $\{\xi_t\}_{t\geq 0}$ are independent and $g(x,\xi)$ has bounded variance;
\item The step sizes $\{\alpha_t\}_{t\geq 0}$ are nonnegative and satisfy
$\sum_{t=0}^\infty \alpha_t = \infty$ and $\sum_{t=0}^\infty \alpha_t^2<\infty$;
\item Suppose $\E_\xi[g(x_*,\xi)]=0$ and there exists $L,\mu>0$ such that for all $x\in\R^n$,
\begin{equation}\label{eqn:aiming-sakrison}
L\|x-x_*\|^2 ~\geq~ \bigl\langle x - x_*,\, \E_\xi[g(x,\xi)] \bigr\rangle
~\geq~ \mu\|x-x_*\|^2. 
\end{equation}
\end{itemize}
Under these conditions, the sequence $\{x_t\}$ converges to $x_*$ in the mean-square sense, i.e., $\lim_{t\to\infty}\E[\|x_t-x_*\|^2]=0$ \citep[e.g.,][]{Sakrison1966}.
In particular, the last condition~\eqref{eqn:aiming-sakrison} ensures that the direction $-\E_\xi[g(x,\xi)]$ always points towards the target $x_*$, implying that the distance from~$x_*$ can be reduced along this direction (with appropriate stepsize).
For this reason, we call it an \emph{aiming} condition.
% \todo{I moved the nonconvex example to Appendix because it does not satisfy upper bound here.}

In the context of stochastic optimization, i.e., minimizing $F(x)=\E_\xi[f(x,\xi)]$ with $g(x,\xi)=\nabla f(x,\xi)$ and $\E_\xi[g(x,\xi)]=\nabla F(x)$, the aiming condition~\eqref{eqn:aiming-sakrison} is implied by~$F$ being $\mu$-strongly convex and $L$-smooth \citep[e.g.,][]{Nesterov04book}.
But in general it is a much weaker condition.
Moreover, the upper bound on the inner product can often be removed by leveraging more sophisticated analysis \citep[e.g.,][]{RobbinsMonro51, Wolfowitz1952} or careful choice of the search direction $g(x,\xi)$ \citep[e.g.][]{Blum1954multidim, Dvoretzky1956}. This is the case of our aiming condition, which we present next.

In order to allow reduction of the expected distance from $x_{t+1}$ to $x_*$, we rewrite the conceptual BCOS method~\eqref{eqn:bcosw-conceptual} as
\[
x_{t+1} = x_t - \alpha_t\lambda x_t - \widetilde\gamma_t\odot d_t
= x_t - \alpha_t\left(\frac{d_t}{\sqrt{\E_t[d_t^2]}}+\lambda x_t\right)
\]
and impose an aiming condition on the expected update direction
\[
\E_t\!\left[\frac{d_t}{\sqrt{\E_t[d_t^2]}}+\lambda x_t\right] = \frac{\E_t[d_t]}{\sqrt{\E_t[d_t^2]}}+\lambda x_t.
\]
\begin{assumption}[Aiming condition]\label{assum:aiming}
There exists $x_*\in\R^n$ such that 
\begin{equation}\label{eqn:aiming}
\left\langle x_t-x_*, \, \frac{\E_t[d_t]}{\sqrt{\E_t[d_t^2]}} + \lambda x_t\right\rangle \geq \lambda \|x_t - x_*\|^2
\end{equation}
holds for all $t\geq 0$ almost surely.
%If $d_t$ is independent of the past trajectory conditioned on~$x_t$, i.e., $\E_t[d_t]=\E[d_t|x_t]$, then it suffices to have~\eqref{eqn:aiming} hold for every $x\in\R^n$ (independent of the trajectory).
\end{assumption}

We make the following remarks regarding Assumption~\ref{assum:aiming}.
\begin{itemize}
\item \label{rmk:aiming-trajectory}
The condition~\eqref{eqn:aiming} in general depends on the past trajectory $\{x_0,d_0,x_1,d_1,\ldots,x_t\}$. 
%through the definition of $\E_t[\cdot]$ in~\eqref{eqn:cond-expect}.
If $\E_t[d_t]=\E[d_t|x_t]$, then we can drop the subscript~$t$ and obtain a trajectory-independent condition like~\eqref{eqn:aiming-sakrison}.
This is indeed the case when $d_t=g(x_t,\xi_t)$ and $\{\xi_0,\ldots,\xi_t\}$ are mutually independent.
We need~\eqref{eqn:aiming} to cover more general cases, e.g., when $d_t$ is the stochastic momentum, which clearly depends on the past trajectory.
\item \label{rmk:aiming-lambda} 
Instead of a generic constant~$\mu$ as in~\eqref{eqn:aiming-sakrison}, we have $\lambda$ on the right-hand side of~\eqref{eqn:aiming} due to the use of decoupled weight decay, interpreted as $L_2$-regularization in~\eqref{eqn:l2-regu}.
\item \label{rmk:aiming-no-upper} 
We do not need an upper bound on the inner product as in~\eqref{eqn:aiming-sakrison} due to the normalization effect of dividing~$d_t$ by $\sqrt{\E_t[d_t^2]}$ in BCOS.
A similar one-sided aiming condition was used in \citep{Blum1954multidim} to establish almost sure convergence of the first multi-dimensional extension of the Robbins-Monro method. 
\end{itemize}

Let $\rho_t\in[0,1]^n$ be the vector of coordinate-wise SiF defined in~\eqref{eqn:SiF-block}. Then we have  
\[
\frac{\E_t\left[d_t\right]}{\sqrt{\E_t[d_t^2]}} = \sqrt{\frac{\E_t[d_t]^2}{\E_t[d_t^2]}}\,\sign(\E_t[d_t]) = \sqrt{\rho_t}\odot \sign(\E_t[d_t]).
\]
Correspondingly, the aiming condition~\eqref{eqn:aiming} can be written as
\begin{equation}\label{eqn:aiming-sif}
\bigl\langle x_t-x_*, \, \sqrt{\rho_t}\odot\sign(\E_t[d_t])+\lambda x_t\bigr\rangle \geq \lambda\|x_t-x_*\|^2.
\end{equation}
Intuitively, the inner product in the aiming condition is predominantly determined by the coordinates that are far from target, in other words, the coordinates with relatively large $|x_{t,k}-x_{*,k}|$ and having a large SiF $\rho_{t,k}$ (effectively high signal-to-noise ratio).

In general we cannot check the aiming condition analytically, since it involves conditional expectations and an unknown target point~$x_*$.
However, we can check empirically if the inner product 
$\langle x_t-x_*, d_t/\sqrt{v_t}+\lambda x_t\rangle$ is positive while running any particular algorithm. In order to do so, we approximate $x_*$ with the last iterate after running the algorithm for many iterations.
% and then repeat the run with the same random seed in initialization and data sampling. 
% \todo{TJ: in my experiments, I actually only ran each job once and saved many the checkpoints of the model through out the training, then checked aiming using the checkpoint data.}
Figure~\ref{fig:cosine_sgd} and~\ref{fig:cosine_bcos} depict the empirical aiming condition while running SGD, SGD with momentum, BCOS-c and BCOSW-c on GPT2-124M with hyper-parameters as chosen in Section~\ref{sec:experiments}.
% with $\alpha_{\max}=0.002, \beta=0.9, \lambda=0.1$ 
Specifically, we record the cosines between $x_t-x_\ast$ and various directions including stochastic gradients $g_t$, momentum $m_t$, $m_t/\sqrt{v_t}$ and $m_t/\sqrt{v_t} + \lambda x_t$ along the training trajectory. 
% \todo{TJ: shall we add $\epsilon$ to the demonimator?}
We notice that the cosines of the angles between $x_t-x_*$ and the search directions are positive (correct aiming) most of the time, especially in the later stage of training. Moreover, the actual update directions in each algorithm exhibit the highest cosine values with $x_t-x_\ast$ among all these directions.

The above results give empirical evidence that the aiming condition can be much weaker than convexity and often holds in training deep learning models. 
We explain the connection between aiming and convexity in more detail in Appendix~\ref{sec:aiming-vs-convexity}.
% \todo{Moved all subtlety about aiming vs convexity into Appendix.}

\begin{figure}[t]
    \centering
\ifloadfigs
    \input{figs/aiming.tex}
\else
    \includegraphics[width=0.99\textwidth]{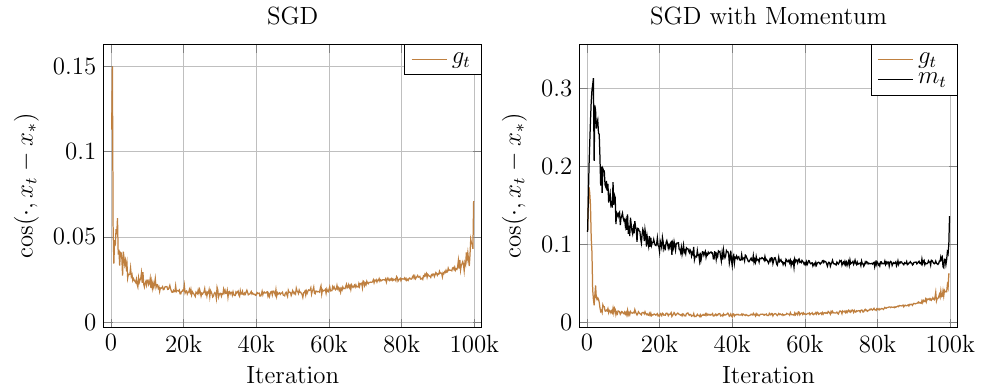}
\fi
\caption{Empirical aiming conditions on GPT2. Left: SGD. Right: SGD with momentum.
}
\label{fig:cosine_sgd}
\end{figure}

\begin{figure}[t]
    \centering
\ifloadfigs
    \input{figs/aiming.tex}
\else
    \includegraphics[width=0.99\textwidth]{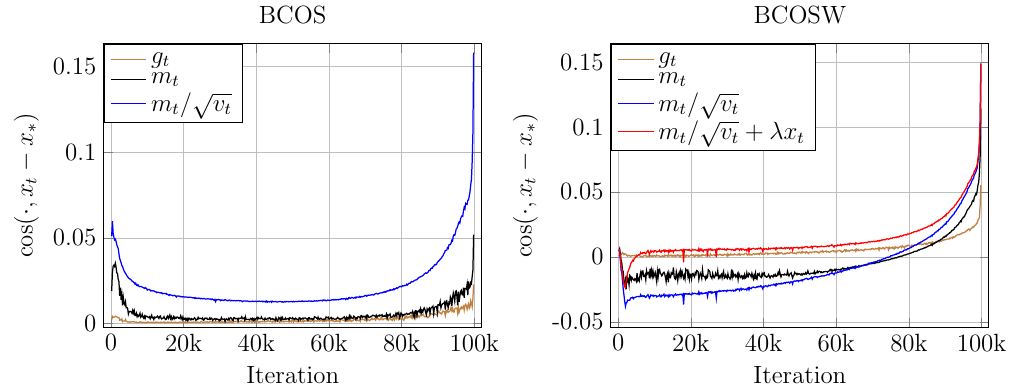}
\fi
\caption{Empirical aiming conditions on GPT2. Left: BCOS-c. Right: BCOSW-c.
}
\label{fig:cosine_bcos}
\end{figure}

% \todo{TJ: made text larger and added title. Shall we add $\epsilon$ to the denominator?}

\subsection{Analysis of conceptual BCOS}
\label{sec:analysis-conceptual}

In this section, we provide convergence analysis for the conceptual BCOS method in~\eqref{eqn:bcosw-conceptual}.
Our first result is about its one-step contraction property.
\begin{lemma}\label{lem:bcosw-one-step}
Suppose Assumption~\ref{assum:aiming} holds, $\alpha_t\geq 0$ and $\alpha_t\lambda<1$ for all $t\geq 0$. Then the sequence $\{x_t\}$ generated by~\eqref{eqn:bcosw-conceptual} satisfies, for all $t\geq 0$,
\begin{equation}\label{eqn:bcosw-contraction}
\E_t \bigl[\norm{x_{t+1}-x_\ast}^2\bigr] 
\leq (1-\alpha_t \lambda)^2\norm{x_{t}-x_\ast}^2 + \alpha_t^2 B_*,
\end{equation}
where 
\begin{equation}\label{eqn:c*}
B_\ast=n + \lambda^2 \norm{x_\ast}^2 + 2\lambda \norm{x_\ast}_1.
\end{equation}
\end{lemma}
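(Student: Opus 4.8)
The plan is to expand $\|x_{t+1}-x_\ast\|^2$, take the conditional expectation $\E_t[\cdot]$, and peel off a factor $(1-\alpha_t\lambda)^2$ multiplying $\|x_t-x_\ast\|^2$, showing that the remaining first-order term is nonpositive and the remaining $\alpha_t^2$-order term is bounded by $B_\ast$. The decisive preliminary step is to write the update in \emph{centered} form by adding and subtracting $(1-\alpha_t\lambda)x_\ast$:
\[
x_{t+1}-x_\ast = (1-\alpha_t\lambda)(x_t-x_\ast) - \alpha_t\Bigl(h_t+\lambda x_\ast\Bigr),
\qquad h_t:=\frac{d_t}{\sqrt{\E_t[d_t^2]}}\ \text{(coordinate-wise).}
\]
Squaring and taking $\E_t[\cdot]$ (under which $x_t$, $\lambda x_\ast$ and $\sqrt{\E_t[d_t^2]}$ are deterministic, and $\E_t[h_t]=\E_t[d_t]/\sqrt{\E_t[d_t^2]}$) produces
\[
\E_t[\|x_{t+1}-x_\ast\|^2]=(1-\alpha_t\lambda)^2\|x_t-x_\ast\|^2-2\alpha_t(1-\alpha_t\lambda)\langle x_t-x_\ast,\,\E_t[h_t]+\lambda x_\ast\rangle+\alpha_t^2\,\E_t[\|h_t+\lambda x_\ast\|^2].
\]

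Next I would dispose of the cross term. The subtlety is that Assumption~\ref{assum:aiming} is phrased with $\lambda x_t$ inside the inner product, while the centered expansion produces $\lambda x_\ast$. I would reconcile them via the identity $\langle x_t-x_\ast,\,\E_t[h_t]+\lambda x_\ast\rangle=\langle x_t-x_\ast,\,\E_t[h_t]+\lambda x_t\rangle-\lambda\|x_t-x_\ast\|^2$, noting that $\E_t[h_t]$ is exactly the vector $\E_t[d_t]/\sqrt{\E_t[d_t^2]}$ appearing in~\eqref{eqn:aiming}. The aiming lower bound $\langle x_t-x_\ast,\,\E_t[h_t]+\lambda x_t\rangle\ge\lambda\|x_t-x_\ast\|^2$ then turns into the clean statement $\langle x_t-x_\ast,\,\E_t[h_t]+\lambda x_\ast\rangle\ge 0$. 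Since $\alpha_t\ge 0$ and $\alpha_t\lambda<1$ give $1-\alpha_t\lambda>0$, the whole middle term is nonpositive and can be dropped.

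It then remains to show $\E_t[\|h_t+\lambda x_\ast\|^2]\le B_\ast$. Expanding, $\E_t[\|h_t+\lambda x_\ast\|^2]=\E_t[\|h_t\|^2]+2\lambda\langle\E_t[h_t],x_\ast\rangle+\lambda^2\|x_\ast\|^2$. The key computation is the self-normalizing identity: coordinate-wise $\E_t[h_{t,k}^2]=\E_t[d_{t,k}^2]/\E_t[d_{t,k}^2]=1$, so $\E_t[\|h_t\|^2]=n$, which is precisely where the term $n$ in $B_\ast$ originates. For the middle term I would use $|\E_t[h_{t,k}]|=\sqrt{\rho_{t,k}}\le 1$ (equivalently Cauchy--Schwarz, $(\E_t[d_{t,k}])^2\le\E_t[d_{t,k}^2]$) to obtain $2\lambda\langle\E_t[h_t],x_\ast\rangle\le 2\lambda\sum_k|x_{\ast,k}|=2\lambda\|x_\ast\|_1$. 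Summing the three pieces gives exactly $B_\ast=n+\lambda^2\|x_\ast\|^2+2\lambda\|x_\ast\|_1$, and combining with the nonpositive cross term yields~\eqref{eqn:bcosw-contraction}.

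I expect the main obstacle to be the bookkeeping around the $\lambda x_t$ versus $\lambda x_\ast$ mismatch in the second step: selecting the centered grouping and recognizing that the slack $\lambda\|x_t-x_\ast\|^2$ is exactly what the aiming condition supplies is what makes the first-order term vanish while leaving the clean contraction factor $(1-\alpha_t\lambda)^2$ rather than a messier coefficient. Everything else---the expansion, the identity $\E_t[\|h_t\|^2]=n$, and the $\ell_1$ bound on the $x_\ast$ cross term---is routine. I would treat $\alpha_t=0$ as a trivial case and implicitly assume $\E_t[d_{t,k}^2]>0$, as required for the conceptual method~\eqref{eqn:bcosw-conceptual} to be well defined.
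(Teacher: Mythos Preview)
Your proposal is correct and follows essentially the same approach as the paper's proof: the same centered decomposition $x_{t+1}-x_\ast=(1-\alpha_t\lambda)(x_t-x_\ast)-\alpha_t(h_t+\lambda x_\ast)$, the same reduction of the aiming condition to $\langle x_t-x_\ast,\E_t[h_t]+\lambda x_\ast\rangle\ge 0$, the self-normalizing identity $\E_t[\|h_t\|^2]=n$, and the bound $2\lambda\langle\E_t[h_t],x_\ast\rangle\le 2\lambda\|x_\ast\|_1$ via $|\E_t[h_{t,k}]|=\sqrt{\rho_{t,k}}\le 1$. The paper phrases the last step as H\"older's inequality $\langle x_\ast,\sqrt{\rho_t}\rangle\le\|x_\ast\|_1\|\sqrt{\rho_t}\|_\infty$, but this is the same argument.
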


\begin{proof}
First, we notice that the aiming condition~\eqref{eqn:aiming} is equivalent to
\begin{equation}\label{eqn:bcosw-aiming-x*}
\biggl\langle x_t-x_*, \, \frac{\E_t[d_t]}{\sqrt{\E_t[d_t^2]}} + \lambda x_* \biggr\rangle \geq 0.
\end{equation}
To see this, we simply add and subtract $\lambda x_t$ inside the inner product:
\begin{align*}
\biggl\langle x_t-x_*, \, \frac{\E_t[d_t]}{\sqrt{\E_t[d_t^2]}} + \lambda x_* \biggr\rangle 
&=
\biggl\langle x_t-x_*, \, \frac{\E_t[d_t]}{\sqrt{\E_t[d_t^2]}} + \lambda x_t - \lambda x_t + \lambda x_* \biggr\rangle \\
&=\biggl\langle x_t-x_*, \, \frac{\E_t[d_t]}{\sqrt{\E_t[d_t^2]}} + \lambda x_t \biggr\rangle - \lambda \|x_t - x_*\|^2 .
\end{align*}

Given $x_{t+1} = x_t - \widetilde\gamma_t\odot d_t - \alpha_t x_t$, we have
\begin{align}
\E_t \bigl[\|x_{t+1}-x_\ast\|^2\bigr] 
&=\E_t \!\left[\bigl\|x_{t} - \widetilde\gamma_t\odot d_t - \alpha_t \lambda x_t -x_\ast\bigr\|^2\right] \nonumber \\
&=\E_t \!\left[\bigl\|(1-\alpha_t \lambda)x_{t} - \widetilde\gamma_t\odot d_t -(1-\alpha_t \lambda)x_\ast - \alpha_t \lambda x_\ast\bigr\|^2\right] \nonumber \\
&=\E_t \!\left[\left\|(1-\alpha_t \lambda)(x_{t} - x_*) - \bigl( \widetilde\gamma_t\odot d_t + \alpha_t \lambda x_\ast\bigr) \right\|^2\right] \nonumber \\
&=\E_t \!\left[\bigl\|(1-\alpha_t \lambda)(x_{t} -x_\ast)\bigr\|^2\right] - 2 \E_t \!\left[\bigl\langle (1-\alpha_t \lambda)(x_{t} -x_\ast),\, \widetilde\gamma_t\odot d_t  + \alpha_t \lambda x_\ast\bigr\rangle\right] \nonumber \\
&\quad + \E_t \!\left[\bigl\|\widetilde\gamma_t\odot d_t  + \alpha_t \lambda x_\ast\bigr\|^2\right].
\label{eqn:bcosw-next-dist}
\end{align}
We examine the three terms in the last equation one by one.
Using the definition of $\E_t[\cdot]$, we can remove the conditional expectation from the first term, i.e.,
\[
\E_t \!\left[\bigl\|(1-\alpha_t \lambda)(x_{t} -x_\ast)\bigr\|^2\right] 
= \bigl\|(1-\alpha_t \lambda)(x_{t} -x_\ast)\bigr\|^2 
= (1-\alpha_t \lambda)^2 \bigl\|x_{t} -x_\ast\bigr\|^2 .
\]
For the second term, we use $\widetilde\gamma_t=\alpha_t/\sqrt{\E_t[d_t^2]}$ to obtain
\begin{align*}
\E_t \!\left[\bigl\langle (1-\alpha_t \lambda)(x_{t} -x_\ast),\, \widetilde\gamma_t\odot d_t  + \alpha_t \lambda x_\ast\bigr\rangle\right] 
&= \bigl\langle (1-\alpha_t \lambda)(x_{t} -x_\ast),\, \widetilde\gamma_t\odot \E_t[d_t] + \alpha_t \lambda x_\ast\bigr\rangle \\
&= \alpha_t(1-\alpha_t \lambda)\biggl\langle x_{t} -x_\ast,\, \frac{\E_t[d_t]}{\sqrt{\E_t[d_t^2]}} + \lambda x_\ast\biggl\rangle \\
&\geq 0,
\end{align*}
where the last inequality is due to~\eqref{eqn:bcosw-aiming-x*}.
For the third term in~\eqref{eqn:bcosw-next-dist}, we have
\allowdisplaybreaks
\begin{align*}
\E_t \!\left[\bigl\|\widetilde\gamma_t\odot d_t  + \alpha_t \lambda x_\ast\bigr\|^2\right]
&=\alpha_t^2\, \E_t \!\left[ \biggl\|\frac{d_t}{\sqrt{\E_t[d_t^2]}}+\lambda x_*\biggr\|^2\right] \\
&=\alpha_t^2\, \E_t \!\left[ \sum_{k=1}^n\left(\frac{d_{t,k}^2}{\E_t[d_{t,k}^2]} + \lambda^2 x_{*,k}^2 + 2\lambda x_{*,k} \frac{d_{t,k}}{\sqrt{\E_t[d_{t,k}^2]}}\right)\right] \\
&=\alpha_t^2\, \left[ \sum_{k=1}^n\left(\frac{\E_t[d_{t,k}^2]}{\E_t[d_{t,k}^2]} + \lambda^2 x_{*,k}^2 + 2\lambda x_{*,k} \frac{\E[d_{t,k}]}{\sqrt{\E_t[d_{t,k}^2]}}\right)\right] \\
&=\alpha_t^2 \sum_{k=1}^n \left(1+\lambda^2 x_{*,k}^2 + 2\lambda x_{*,k} \sqrt{\rho_{t,k}}\right) \\
&=\alpha_t^2 \bigl(n + \lambda^2\|x_*\|^2 + 2\lambda \langle x_*, \sqrt{\rho_t}\rangle\bigr) \\
&\leq \alpha_t^2 \bigl(n + \lambda^2\|x_*\|^2 + 2\lambda \| x_*\|_1 \|\sqrt{\rho_t}\|_\infty \bigr) \\
&\leq \alpha_t^2 \bigl(n + \lambda^2\|x_*\|^2 + 2\lambda \| x_*\|_1 \bigr), 
\end{align*}
where the last two inequalities are due to H\"older's inequality
$\langle x_*,\sqrt{\rho_t}\rangle\leq \|x_*\|_1\|\sqrt{\rho_t}\|_\infty$ and the fact that $\|\sqrt{\rho_t}\|_\infty\leq 1$, respectively.
Plugging the above equality and inequalities into~\eqref{eqn:bcosw-next-dist} yields the desired result.
\end{proof}

As a direct consequence of Lemma~\ref{lem:bcosw-one-step}, there exists sufficiently small $\alpha_t>0$ such that 
\[
\E_t \bigl[\norm{x_{t+1}\!-\!x_\ast}^2\bigr] \leq \|x_t\!-\!x_*\|^2, 
\qquad \forall\, t\geq 0.
\]
In fact, we can prove much stronger results, including almost sure convergence of~$x_t$ to~$x_*$ and characterizing the rate of convergence of $\E[\|x_t-x_*\|^2]$. 
These are presented in the following subsections.

\subsubsection{Almost sure convergence}

The following theorem establishes almost sure (a.s.) convergence of the conceptual BCOS algorithm under a classical condition on the sequence $\{\alpha_t\}$.

\begin{theorem}[Almost sure convergence]\label{thm:bcosw-conceptual-as}
Suppose the stepsize schedule $\{\alpha_t\}_{t\geq 0}$ satisfies
\begin{equation}\label{eqn:alpha_t-bcosw}
\alpha_t \geq 0,\quad  \alpha_t\lambda \leq 1,\quad \forall\, t\geq 0, \qquad 
\sum_{t=0}^\infty \alpha_t = \infty, \qquad \sum_{t=0}^\infty \alpha_t^2 < \infty.
\end{equation}
The under Assumption~\ref{assum:aiming}, the sequence $\{x_t\}$ generated by~\eqref{eqn:bcosw-conceptual} satisfies $\norm{x_t-x_\ast} \to 0$ a.s.
\end{theorem}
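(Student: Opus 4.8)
The plan is to recognize the one-step bound from Lemma~\ref{lem:bcosw-one-step} as a Robbins--Siegmund supermartingale inequality and then invoke the classical almost-sure convergence theorem for nonnegative (almost-)supermartingales \cite{ROBBINS1971}. Writing $V_t := \norm{x_t - x_\ast}^2$, which is measurable with respect to the history defining $\E_t[\cdot]$, I would first expand the contraction factor $(1-\alpha_t\lambda)^2 = 1 - 2\alpha_t\lambda + \alpha_t^2\lambda^2$ so that the bound~\eqref{eqn:bcosw-contraction} reads
\[
\E_t[V_{t+1}] \leq (1+\alpha_t^2\lambda^2)\,V_t - 2\alpha_t\lambda V_t + \alpha_t^2 B_\ast .
\]
This is exactly of the form $\E_t[V_{t+1}] \leq (1+a_t)V_t - b_t + c_t$ with the nonnegative, adapted quantities $a_t = \alpha_t^2\lambda^2$, $b_t = 2\alpha_t\lambda V_t$, and $c_t = \alpha_t^2 B_\ast$, where $B_\ast$ is the fixed constant in~\eqref{eqn:c*}.

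The summability hypotheses of the supermartingale theorem then follow immediately from the stepsize assumptions~\eqref{eqn:alpha_t-bcosw}: since $\sum_t \alpha_t^2 < \infty$, both $\sum_t a_t = \lambda^2 \sum_t \alpha_t^2$ and $\sum_t c_t = B_\ast \sum_t \alpha_t^2$ are finite. The Robbins--Siegmund theorem then yields two conclusions that hold almost surely: the sequence $V_t$ converges to some finite limit $V_\infty \geq 0$, and the decrease terms are summable, i.e.\ $\sum_{t=0}^\infty \alpha_t\lambda V_t < \infty$.

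It remains to upgrade this to $V_\infty = 0$, which is where the weight-decay parameter $\lambda>0$ does the essential work, playing the role of the strong-monotonicity constant $\mu$ in the classical aiming condition~\eqref{eqn:aiming-sakrison}. Dividing the summability conclusion by $\lambda$ gives $\sum_t \alpha_t V_t < \infty$ almost surely; combined with the non-summability $\sum_t \alpha_t = \infty$, I would argue by contradiction on the full-probability event where $V_t \to V_\infty$: if $V_\infty > 0$, then $V_t \geq V_\infty/2$ for all large $t$, which forces $\sum_t \alpha_t V_t \geq (V_\infty/2)\sum_t \alpha_t = \infty$, a contradiction. Hence $V_\infty = 0$ almost surely, that is, $\norm{x_t - x_\ast} \to 0$ a.s. I expect the main (if mild) obstacle to be precisely this last step: the supermartingale theorem alone only delivers a finite limit, and pinning it to zero hinges on the Robbins--Monro-type balance between $\sum_t\alpha_t = \infty$ and the $\lambda$-driven decrease term. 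In the degenerate case $\lambda=0$ one would instead only retain summability of $\sum_t\alpha_t\langle x_t-x_\ast,\, \E_t[d_t]/\sqrt{\E_t[d_t^2]}\rangle$, whose nonnegative summand is not bounded below by a multiple of $V_t$, so that convergence of $V_t$ could not by itself be forced to zero without strengthening the aiming condition.
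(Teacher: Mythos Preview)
Your proposal is correct and follows essentially the same route as the paper: apply Lemma~\ref{lem:bcosw-one-step}, expand $(1-\alpha_t\lambda)^2$, cast the resulting recursion in Robbins--Siegmund form, and use $\sum_t\alpha_t=\infty$ together with the summability of $\sum_t\alpha_t\lambda V_t$ to force the limit to zero. The only cosmetic difference is that your labeling of the additive and subtractive terms ($b_t$ versus $c_t$) is swapped relative to the paper's statement of the almost-supermartingale lemma, and you spell out the final contradiction argument in slightly more detail than the paper does.
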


The proof of Theorem~\ref{thm:bcosw-conceptual-as} is based on the following \emph{almost supermartingale} lemma of \citet{ROBBINS1971}.
\begin{lemma}[{\citet[Theorem~1]{ROBBINS1971}}]
\label{lem:almost_supermartingale}
Let $(\Omega, \cF, P)$ be a probability space, and $\cF_0 \subset \cF_1 \subset \ldots$ be a sequence of sub-$\sigma$-algebras of $\cF$. For each $t\geq 0$, let $X_t$, $a_t$, $b_t$ and $c_t$ be non-negative $\cF_t$-measurable random variables such that
\begin{equation}\label{eq:almost_supermartingale_recursive}
    \E[X_{t+1} \vert \cF_t] \leq X_t(1+a_t) + b_t - c_t.
\end{equation}
If $\sum_{t=0}^\infty a_t < \infty$ and $\sum_{t=0}^\infty b_t < \infty$, then $\lim_{t \to \infty} X_t$ exists and is finite, and $\sum_{t=0}^\infty c_t < \infty$ almost surely (a.s.).
\end{lemma}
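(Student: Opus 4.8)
The statement is exactly the Robbins--Siegmund almost-supermartingale lemma, so the plan is to reduce it to Doob's classical convergence theorem for nonnegative supermartingales, by two successive changes of variable followed by a localization argument. Throughout I treat the hypotheses $\sum_t a_t<\infty$ and $\sum_t b_t<\infty$ as almost-sure statements, so that the relevant events have full measure.

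First I would eliminate the multiplicative term $a_t$. Set $U_t=\prod_{s=0}^{t-1}(1+a_s)$ with $U_0=1$; this is nonnegative, nondecreasing, $\cF_{t-1}$-measurable, and bounded below by $1$. Since $U_{t+1}=(1+a_t)U_t$ is $\cF_t$-measurable, dividing~\eqref{eq:almost_supermartingale_recursive} by $U_{t+1}$ and writing $\tilde X_t=X_t/U_t$, $\tilde b_t=b_t/U_{t+1}$, $\tilde c_t=c_t/U_{t+1}$ yields $\E[\tilde X_{t+1}\mid\cF_t]\le\tilde X_t+\tilde b_t-\tilde c_t$, i.e. the same recursion with $a_t\equiv 0$. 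On $\{\sum_t a_t<\infty\}$ one has $U_t\uparrow U_\infty\in[1,\infty)$, so $\sum_t\tilde b_t\le\sum_t b_t<\infty$, and convergence of $\tilde X_t$ together with $\sum_t\tilde c_t<\infty$ transfer back to $X_t=U_t\tilde X_t$ and to $c_t\le U_\infty\tilde c_t$. Hence it suffices to treat the case $a_t\equiv 0$.

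Second, with $a_t\equiv 0$ I would exhibit an honest supermartingale by Doob-type bookkeeping. Define $M_t=X_t+\sum_{s=0}^{t-1}c_s-\sum_{s=0}^{t-1}b_s$; a one-line computation from~\eqref{eq:almost_supermartingale_recursive} gives $\E[M_{t+1}\mid\cF_t]\le M_t$, so $(M_t)$ is an $(\cF_t)$-supermartingale with $M_0=X_0$. It is not bounded below globally, but $M_t\ge-\sum_{s=0}^{t-1}b_s$, so I localize: for $M\in\NN$ let $\tau_M=\inf\{t:\sum_{s=0}^{t-1}b_s>M\}$, which is a stopping time because $\sum_{s=0}^{t-1}b_s$ is $\cF_{t-1}$-measurable. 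The stopped process $M_{t\wedge\tau_M}$ is a supermartingale bounded below by $-M$, so $M_{t\wedge\tau_M}+M$ is a nonnegative supermartingale and converges almost surely by Doob's theorem. On $\{\tau_M=\infty\}=\{\sum_s b_s\le M\}$ the stopped process coincides with $M_t$, so $M_t$ converges a.s. there; taking the union over $M$ covers $\{\sum_s b_s<\infty\}$, which is almost all of $\Omega$.

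Finally, on that event $\sum_{s=0}^{t-1}b_s$ converges, hence $X_t+\sum_{s=0}^{t-1}c_s=M_t+\sum_{s=0}^{t-1}b_s$ converges to a finite limit; since $\sum_{s=0}^{t-1}c_s$ is nondecreasing and dominated by this convergent nonnegative quantity, it must have a finite limit, giving $\sum_s c_s<\infty$, and subtracting it shows $X_t$ itself converges to a finite limit. I expect the main obstacle to be precisely the localization step and its integrability/measurability bookkeeping: confirming that each stopped process is a genuine $L^1$ supermartingale (which tacitly uses integrability of the $X_t$, ensured by $\E[M_{t\wedge\tau_M}]\le\E[X_0]$ together with the lower bound $-M$), and that the a.s. conclusions on the nested events $\{\sum_s b_s\le M\}$ glue correctly as $M\to\infty$. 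The reduction step requires the parallel care that $U_{t+1}$ is $\cF_t$-measurable, which is what legitimizes pulling it outside the conditional expectation.
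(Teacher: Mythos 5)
The paper offers no proof of this lemma to compare against: it is imported verbatim from Robbins and Siegmund \cite[Theorem~1]{ROBBINS1971} and used as a black box in the almost-sure convergence arguments. What you have written is the classical proof of that theorem, and its overall architecture is sound: discounting by $U_t=\prod_{s=0}^{t-1}(1+a_s)$ to reduce to $a_t\equiv 0$ (with the correct observation that $U_{t+1}$ is $\cF_t$-measurable, which licenses pulling it through the conditional expectation), compensating to form $M_t=X_t+\sum_{s=0}^{t-1}c_s-\sum_{s=0}^{t-1}b_s$, localizing on the partial sums of $b$, invoking Doob's theorem for nonnegative supermartingales, and transferring the conclusions back through $U_\infty<\infty$. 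The reduction step and the final bookkeeping (monotone $\sum_{s=0}^{t-1}c_s$ dominated by a convergent quantity) are correct.

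One step fails as written. With your $\tau_M=\inf\{t:\sum_{s=0}^{t-1}b_s>M\}$, the stopped process $M_{t\wedge\tau_M}$ is \emph{not} bounded below by $-M$: on $\{\tau_M\le t\}$ the stopped value is $M_{\tau_M}\ge-\sum_{s=0}^{\tau_M-1}b_s$, and by the definition of $\tau_M$ this partial sum has already overshot $M$, by the increment $b_{\tau_M-1}$, which is an unbounded random variable; so no uniform lower bound holds and Doob's theorem cannot be applied to $M_{t\wedge\tau_M}+M$ as stated. The repair follows from the predictability you yourself note: since $\{\tau_M\le t\}=\{\sum_{s=0}^{t-1}b_s>M\}\in\cF_{t-1}$, the random time $\sigma_M:=(\tau_M-1)^+$ satisfies $\{\sigma_M\le t\}=\{\tau_M\le t+1\}\in\cF_t$ and hence is a stopping time, and $t\le\sigma_M$ forces $\sum_{s=0}^{t-1}b_s\le M$, so $M_{t\wedge\sigma_M}+M$ is a genuine nonnegative supermartingale; equivalently, define $\tau_M=\inf\{t:\sum_{s=0}^{t}b_s>M\}$ from the start. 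Since $\{\sigma_M=\infty\}=\{\sum_{s=0}^\infty b_s\le M\}$, your gluing over $M\in\NN$ is unaffected. A second, smaller caveat: your integrability check $\E[M_{t\wedge\tau_M}]\le\E[X_0]$ tacitly assumes $\E[X_0]<\infty$, which the lemma does not grant. Read the conditional expectations in the sense of nonnegative random variables (always well defined) and, if needed, localize additionally on $\{X_0\le K\}\in\cF_0$ — multiplying the shifted stopped process by $\ones_{\{X_0\le K\}}$ preserves the supermartingale property and restores integrability — then let $K\to\infty$. With these two repairs your proposal is a complete and correct proof.
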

    
\begin{proof}[Proof of Theorem~\ref{thm:bcosw-conceptual-as}]
Define $X_t:=\norm{x_{t}-x_\ast}^2$ and $\cF_t$ to be the $\sigma$-algebra generated by $X_0, \cdots, X_t$. 
Lemma~\ref{lem:bcosw-one-step} implies the following recursion,
in the form of~\eqref{eq:almost_supermartingale_recursive},
\begin{align*}
\E[X_{t+1}\vert\cF_t] 
&\leq (1 - \alpha_t \lambda)^2 X_t + \alpha_t^2 B_\ast \\
&= (1 + \alpha_t^2 \lambda^2) X_t + \alpha_t^2 B_\ast  - 2 \alpha_t \lambda X_t\\
&= \left(1 + a_t \right)X_t + b_t - c_t,
\end{align*}
where $a_t= \alpha_t^2 \lambda^2$, $b_t=\alpha_t^2 B_\ast$ and $c_t=2 \alpha_t \lambda X_t$. 
Notice that $X_t$, $a_t$, $b_t$ and $c_t$ are all non-negative, and the square-summable assumption of $\alpha_t $ guarantees that
\[
\sum_{t=0}^\infty a_t = \sum_{t=0}^\infty \alpha_t^2 \lambda^2 < \infty, \qquad \sum_{t=0}^\infty b_t = \sum_{t=0}^\infty \alpha_t^2 B_\ast < \infty.
\]
Therefore, all the assumptions in Lemma~\ref{lem:almost_supermartingale} are satisfied. We may now apply the lemma to conclude that 
$X_t=\norm{x_{t}-x_\ast}^2 \to X$ for some $X < \infty$ and
% \todo{Tao: could you check the technicality about a.s. convergence you raised in email early?}
\[
\sum_{t=0}^\infty c_t= \sum_{t=0}^\infty 2 \alpha_t \lambda \norm{x_{t}-x_\ast}^2 < \infty \quad \text{ a.s.}
\]
This is compatible with the assumption $\sum_{t=0}^\infty \alpha_t = \infty$ only if $\norm{x_{t}-x_*}^2 \to 0$ a.s.
\end{proof}

\subsubsection{Rates of convergence}
First, with a constant stepsize schedule, we have linear convergence to a neighborhood of~$x_*$ as stated by the following corollary.
\begin{corollary}\label{cor:bcosw_constant}
Consider the conceptual BCOS method~\eqref{eqn:bcosw-conceptual} with a constant step size schedule $\alpha_t = \alpha$ where $\alpha$ satisfies $\alpha\lambda<1$. 
Then under Assumption~\ref{assum:aiming}, we have
\[
\E\bigl[\norm{x_{t}-x_\ast}^2\bigr] \leq (1-\alpha \lambda)^{2t}\,\E\bigl[\norm{x_{0}-x_\ast}^2\bigr] + \frac{\alpha^2 B_\ast}{1-(1-\alpha\lambda)^{2}},
\]
where $B_\ast$ is defined in~\eqref{eqn:c*}.
\end{corollary}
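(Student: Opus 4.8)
The plan is to combine the one-step contraction from Lemma~\ref{lem:bcosw-one-step} with the tower property of conditional expectation, and then unroll the resulting deterministic scalar recursion. First I would specialize Lemma~\ref{lem:bcosw-one-step} to the constant stepsize $\alpha_t=\alpha$, which gives directly
\[
\E_t\bigl[\norm{x_{t+1}-x_\ast}^2\bigr] \leq (1-\alpha\lambda)^2\,\norm{x_t-x_\ast}^2 + \alpha^2 B_\ast .
\]
Taking total expectation on both sides and invoking the tower property $\E\bigl[\E_t[\cdot]\bigr]=\E[\cdot]$ removes the dependence on the trajectory and yields a recursion purely in the scalars $E_t := \E\bigl[\norm{x_t-x_\ast}^2\bigr]$, namely
\[
E_{t+1} \leq (1-\alpha\lambda)^2 E_t + \alpha^2 B_\ast .
\]

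Next I would solve this linear recursion. Writing $q:=(1-\alpha\lambda)^2$, the standing assumption $0<\alpha\lambda<1$ guarantees $q\in[0,1)$, so the iteration is a contraction with a constant forcing term $\alpha^2 B_\ast$. Iterating from step $t$ back to the initial step produces
\[
E_t \leq q^t E_0 + \alpha^2 B_\ast \sum_{j=0}^{t-1} q^j .
\]
Using the closed form of the geometric sum, $\sum_{j=0}^{t-1} q^j = \frac{1-q^t}{1-q}$, and bounding $1-q^t\leq 1$, I obtain
\[
E_t \leq q^t E_0 + \frac{\alpha^2 B_\ast}{1-q} .
\]
Substituting back $q=(1-\alpha\lambda)^2$ recovers exactly the claimed bound, since $1-q = 1-(1-\alpha\lambda)^2$ is precisely the denominator appearing in the statement.

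This argument is entirely routine and I do not expect any genuine obstacle. The only points needing minor care are: ensuring $q<1$ so that the geometric series converges and the denominator $1-(1-\alpha\lambda)^2$ is strictly positive (both handled by $0<\alpha\lambda<1$, which holds because $\alpha>0$ and $\lambda>0$), and correctly applying the tower property to pass from the conditional bound of Lemma~\ref{lem:bcosw-one-step} to an unconditional recursion in $E_t$. Everything else reduces to summing a finite geometric series.
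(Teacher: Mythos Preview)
Your proposal is correct and follows essentially the same route as the paper: apply Lemma~\ref{lem:bcosw-one-step} with $\alpha_t=\alpha$, take total expectation via the tower property, and unroll the resulting scalar recursion using a geometric sum bounded by $\frac{1}{1-(1-\alpha\lambda)^2}$. The paper's proof is more terse (it writes the unrolled sum directly), but the argument is identical.
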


\begin{proof}
Recursive application of Lemma~\ref{lem:bcosw-one-step} with $\alpha_t=\alpha$ and taking total expectation yields 
% \todo{TJ: added taking total expectation}
\begin{align*}
    \E\bigl[\norm{x_{t}-x_\ast}^2\bigr] &\leq (1-\alpha \lambda)^{2t}\, \E\bigl[\norm{x_{0}-x_\ast}^2\bigr] + \sum_{s=0}^{t-1} (1-\alpha\lambda)^{2(t-s-1)}\alpha^2 B_\ast\\
    &= (1-\alpha \lambda)^{2t}\, \E\left[\norm{x_{0}-x_\ast}^2\right] + \sum_{s=0}^{t-1} (1-\alpha\lambda)^{2s}\alpha^2 B_\ast\\
    &\leq (1-\alpha \lambda)^{2t}\, \E\left[\norm{x_{0}-x_\ast}^2\right] + \frac{\alpha^2 B_\ast}{1-(1-\alpha\lambda)^{2}},
\end{align*}
which is the desired result.
\end{proof}

\smallskip

With a diminishing stepsize schedule $\alpha_t$, we can show that $\E[\|x_t-x_*\|^2]\to 0$ at a sublinear rate, as given in the following theorem.

\begin{theorem}\label{thm:bcosw_1/t}
Consider the algorithm~\eqref{eqn:bcosw-conceptual} with stepsize schedule $\alpha_t = \frac{\alpha}{t+1}$.
Suppose the constant $\alpha$ satisfies $1/2<\alpha \lambda<1$ and Assumption~\ref{assum:aiming} holds.
Then we have for all $t\geq 0$,
\[
\E [\norm{x_{t}-x_\ast}^2] \leq \frac{\alpha^2 \left(\lambda^2 \E\left[\norm{x_{0}-x_\ast}^2\right] + (1+\pi^2/6) B_\ast \right)}{2\alpha \lambda - 1} \frac{1}{t+1} + \cO\left(\frac{1}{(t+1)^2}+\frac{1}{(t+1)^{2\alpha \lambda}}\right),
\]
where $B_\ast$ is defined in~\eqref{eqn:c*}.
\end{theorem}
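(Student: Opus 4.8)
The plan is to collapse the stochastic one-step bound into a deterministic scalar recursion and then analyze that single real sequence. Taking total expectation in Lemma~\ref{lem:bcosw-one-step} and writing $a_t := \E[\norm{x_t-x_\ast}^2]$, I obtain $a_{t+1}\le(1-\alpha_t\lambda)^2 a_t + \alpha_t^2 B_\ast$, where with $\alpha_t=\alpha/(t+1)$ we have $\alpha_t\lambda = \alpha\lambda/(t+1)\in(0,1)$. Setting $c:=\alpha\lambda\in(1/2,1)$, the entire theorem becomes a statement about the recursion $a_{t+1}\le\bigl(1-\tfrac{c}{t+1}\bigr)^2 a_t + \tfrac{\alpha^2 B_\ast}{(t+1)^2}$.

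First I would record a crude uniform bound. Since $0<\alpha_t\lambda<1$ gives $(1-\alpha_t\lambda)^2\le1$, the recursion yields $a_{t+1}\le a_t+\alpha^2 B_\ast/(t+1)^2$; telescoping and $\sum_{n\ge1}n^{-2}=\pi^2/6$ then give $a_t\le M:=a_0+\tfrac{\pi^2}{6}\alpha^2 B_\ast$ for all $t$ — this is where the $\pi^2/6$ enters the final constant. Next I would use the exact identity $\bigl(1-\tfrac{c}{t+1}\bigr)^2 a_t=\bigl(1-\tfrac{2c}{t+1}\bigr)a_t+\tfrac{c^2}{(t+1)^2}a_t$ and bound the two $\cO(1/(t+1)^2)$ terms via $a_t\le M$ together with $c^2<1$ (so that $c^2\cdot\tfrac{\pi^2}{6}\alpha^2 B_\ast\le\tfrac{\pi^2}{6}\alpha^2 B_\ast$), obtaining $c^2 a_t+\alpha^2 B_\ast\le \alpha^2\lambda^2 a_0+(1+\tfrac{\pi^2}{6})\alpha^2 B_\ast=:D$. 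This reduces everything to the clean recursion $a_{t+1}\le\bigl(1-\tfrac{2c}{t+1}\bigr)a_t+\tfrac{D}{(t+1)^2}$, whose coefficient is nonnegative for $t\ge1$ because $2c<2$.

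Finally I would solve this simplified recursion. Set $C:=D/(2c-1)$; the hypothesis $1/2<\alpha\lambda<1$ is exactly what makes $2c-1>0$, so $C$ is finite and positive, and it is precisely the stated leading coefficient $\alpha^2(\lambda^2 a_0+(1+\pi^2/6)B_\ast)/(2\alpha\lambda-1)$. The key algebraic fact is $D-2cC=-C$, i.e. $C(1-2c)+D=0$: substituting the ansatz $a_t\le\tfrac{C}{t+1}$ gives $a_{t+1}\le\tfrac{C}{t+1}-\tfrac{C}{(t+1)^2}$, and this is $\le\tfrac{C}{t+2}$ because $(t+1)(t+2)\ge(t+1)^2$, so the pure $C/(t+1)$ bound is self-propagating once a valid base case is available. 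The remainder terms in the theorem arise from two sources: the transient at small $t$, where the base case $a_t\le C/(t+1)$ need not yet hold, must be carried by the homogeneous factor $\prod_{s=1}^{t}(1-c/s)^2$, which by $\prod_{s=1}^t(1-c/s)=\Gamma(t+1-c)/(\Gamma(t+1)\Gamma(1-c))$ and Stirling decays like $\mathrm{const}\cdot(t+1)^{-2c}$, producing the $\cO(1/(t+1)^{2\alpha\lambda})$ term; and the discretization gaps $\tfrac{1}{t+1}-\tfrac{1}{t+2}=\tfrac{1}{(t+1)(t+2)}$ accumulate into the $\cO(1/(t+1)^2)$ term.

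I expect the main obstacle to be exactly this remainder bookkeeping: turning the elegant self-propagating induction into a bound that holds for \emph{all} $t\ge0$, including the small-$t$ regime where $1-2c/(t+1)$ can be negative and the bare $C/(t+1)$ ansatz is not yet valid. Verifying that the transient genuinely collapses into the homogeneous $(t+1)^{-2\alpha\lambda}$ contribution, and that the accumulated discretization defects stay $\cO(1/(t+1)^2)$ rather than something larger, is the delicate part; everything else is the routine reduction and the one-line induction closing inequality.
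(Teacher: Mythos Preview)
Your reduction is exactly the paper's: take total expectation in Lemma~\ref{lem:bcosw-one-step}, telescope to get the uniform bound $a_t\le a_0+\tfrac{\pi^2}{6}\alpha^2 B_\ast$, expand $(1-\alpha_t\lambda)^2=1-2\alpha_t\lambda+\alpha_t^2\lambda^2$, absorb the $\alpha_t^2\lambda^2 a_t$ term via the uniform bound, and arrive at the clean recursion $a_{t+1}\le\bigl(1-\tfrac{2\alpha\lambda}{t+1}\bigr)a_t+\tfrac{D}{(t+1)^2}$ with the stated constant $D$. The only difference is that at this point the paper simply invokes Chung's lemma (Lemma~\ref{lem:chung1}, i.e.\ \cite[Lemma~1]{chung1954}) as a black box, whereas you propose to re-derive its conclusion by hand via the self-propagating ansatz $a_t\le C/(t+1)$ plus a transient controlled by the homogeneous product $\prod_s(1-c/s)^2\asymp (t+1)^{-2c}$. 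Your sketch of that argument is correct, and the ``remainder bookkeeping'' you flag as the main obstacle is precisely what Chung's lemma packages; citing it would spare you that work, but your direct route is a valid and essentially equivalent alternative.
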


Our proof of Theorem~\ref{thm:bcosw_1/t} is based on Lemma~\ref{lem:bcosw-one-step} and a classical result of \citet{chung1954}.

\begin{lemma}[{\citet[Lemma~1]{chung1954}}]
\label{lem:chung1}
    Suppose that $\{X_t\}$ is a sequence of real numbers such that for all $t\geq 1$, 
    \begin{equation}\label{eqn:chung1-recursion}
        X_{t+1} \leq \left(1 - \frac{a}{t}\right) X_t + \frac{b}{t^{p+1}},
    \end{equation}
where $a > p > 0$ and $b > 0$. Then
    \[
    X_t \leq \frac{b}{a-p} \frac{1}{t^p} + \mathcal O \left(\frac{1}{t^{p+1}} + \frac{1}{t^a}\right).
    \]
\end{lemma}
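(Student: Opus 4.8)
The plan is to prove Lemma~\ref{lem:chung1} by a direct induction that compares $X_t$ against the candidate profile $c\,t^{-p}$ with $c:=\frac{b}{a-p}$. The choice of $c$ is dictated by an algebraic \emph{discrete fixed-point} identity: plugging $c\,t^{-p}$ into the right-hand side of~\eqref{eqn:chung1-recursion} gives $(1-\frac{a}{t})c\,t^{-p}+b\,t^{-p-1}=c\,t^{-p}-(ac-b)t^{-p-1}$, and since $ac-b=\frac{ab}{a-p}-b=\frac{pb}{a-p}=pc$, this equals $c\,t^{-p}-pc\,t^{-p-1}$, i.e. exactly the first-order Taylor expansion of $c\,(t+1)^{-p}$. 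Thus $c\,t^{-p}$ is a stationary profile of the recursion up to second order, which is precisely why $c$ is the leading coefficient. I would set $R_t:=X_t-c\,t^{-p}$ and aim to show $R_t\le K\bigl(t^{-p-1}+t^{-a}\bigr)$ for a constant $K$ and all $t$ beyond a threshold $t_0>a$ (chosen so that $1-\frac{a}{t}\in(0,1)$ throughout); the finitely many earlier indices only inflate the constant hidden in $\cO(\cdot)$.

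The engine is two elementary convexity inequalities for the decreasing convex map $t\mapsto t^{-\beta}$ (with $\beta>0$): first $(t+1)^{-\beta}\ge t^{-\beta}\bigl(1-\beta/t\bigr)$, and second, applied with $\beta=p$, the residual $\eta_t:=\bigl(1-\tfrac{a}{t}\bigr)c\,t^{-p}+b\,t^{-p-1}-c\,(t+1)^{-p}=c\,t^{-p}-pc\,t^{-p-1}-c(t+1)^{-p}$ satisfies $\eta_t\le 0$ (and $\eta_t=\cO(t^{-p-2})$). Subtracting $c\,(t+1)^{-p}$ from both sides of~\eqref{eqn:chung1-recursion} and substituting $X_t=c\,t^{-p}+R_t$ then yields the clean linear inequality $R_{t+1}\le \bigl(1-\tfrac{a}{t}\bigr)R_t+\eta_t\le\bigl(1-\tfrac{a}{t}\bigr)R_t$. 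I would now close the induction by a case split on the sign of $a-(p+1)$. If $a\ge p+1$, I use the single-term ansatz $R_t\le K\,t^{-p-1}$: the step reduces to $(1-\tfrac{a}{t})t^{-p-1}\le (t+1)^{-p-1}$, which follows from $1-\tfrac{a}{t}\le 1-\tfrac{p+1}{t}\le (t+1)^{-p-1}t^{\,p+1}$ (the first inequality using $a\ge p+1$, the second being the convexity bound). If instead $p<a\le p+1$, I use $R_t\le K\,t^{-a}$, and the step reduces to $(1-\tfrac{a}{t})t^{-a}\le (t+1)^{-a}$, which is exactly the convexity bound with $\beta=a$. In either regime a negative value of $R_t$ is harmless, since $R_t<0$ forces $R_{t+1}\le(1-\tfrac{a}{t})R_t<0$, and $R_t\le K\,t^{-\min(a,\,p+1)}$ immediately gives $R_t\le K\bigl(t^{-p-1}+t^{-a}\bigr)$; adding back $c\,t^{-p}$ produces the stated bound.

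The main obstacle is bookkeeping rather than any deep difficulty: one must pin down the threshold $t_0$ and the base-case constant $K$ so that a \emph{single} $K$ serves both the base case $R_{t_0}\le K\,t_0^{-\min(a,p+1)}$ (available because $X_{t_0}$ is a fixed finite number) and every induction step, and one must confirm that the convexity inequalities leave \emph{strict} slack at the boundary $a=p+1$ so that the case $a\ge p+1$ covers it. I would also make explicit the interpretation that $t^{-a}$ absorbs both the homogeneous decay $\prod_{s=t_0}^{t-1}(1-a/s)\asymp t^{-a}$ and the initial transient, whereas $t^{-p-1}$ is the genuine second-order correction to the leading profile---exactly the two terms appearing in the conclusion. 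An alternative route is to solve the linear recursion explicitly, bounding the accumulated forcing $\sum_{s=t_0}^{t-1}\bigl(\prod_{r=s+1}^{t-1}(1-a/r)\bigr)b\,s^{-p-1}$ via Gamma-function asymptotics and Euler--Maclaurin; this reproduces $\frac{b}{a-p}t^{-p}$ together with the same $\cO(t^{-p-1}+t^{-a})$ remainder, but the induction above sidesteps the delicate uniform control of those sums.
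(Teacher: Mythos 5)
The paper does not prove this lemma at all: it is imported verbatim as a classical result, citing \cite[Lemma~1]{chung1954}, and is used as a black box in the proof of Theorem~\ref{thm:bcosw_1/t}. So there is no in-paper argument to compare against; your proposal supplies a self-contained proof, and after checking it, I find it correct. The two pillars both verify. The fixed-point identity is exact: with $c=\frac{b}{a-p}$ one has $ac-b=pc$, so $\bigl(1-\frac{a}{t}\bigr)c\,t^{-p}+b\,t^{-p-1}=c\,t^{-p}-pc\,t^{-p-1}$, and the tangent-line inequality $(1+u)^{-\beta}\geq 1-\beta u$ (convexity of $u\mapsto(1+u)^{-\beta}$ at $u=0$, valid for $u>-1$) gives both $\eta_t\leq 0$ with $\beta=p$ and the induction steps $\bigl(1-\frac{a}{t}\bigr)t^{-\beta}\leq (t+1)^{-\beta}$ with $\beta=p+1$ (Case $a\geq p+1$, via $1-\frac{a}{t}\leq 1-\frac{p+1}{t}$) and $\beta=a$ (Case $p<a\leq p+1$). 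Consequently $R_{t+1}\leq\bigl(1-\frac{a}{t}\bigr)R_t$ closes the induction $R_t\leq K\,t^{-\min(a,\,p+1)}$ for $t\geq t_0>a$, which is exactly the claimed $\cO(t^{-p-1}+t^{-a})$ remainder; finitely many initial indices are absorbed into $K$ as you say. Two of your worries are non-issues: the sign case split on $R_t$ is unnecessary, since once $t\geq t_0>a$ the factor $1-\frac{a}{t}$ is nonnegative and multiplying the inductive hypothesis by it is valid regardless of the sign of $R_t$; and no strict slack is needed at $a=p+1$, because the non-strict convexity bound already suffices there (both cases cover the boundary). Compared with Chung's original argument, which works with the rescaled quantity $t^pX_t$ and a limsup analysis, and with your alternative route through the explicit product-sum solution and Gamma-function asymptotics, your supersolution comparison is the most elementary of the three: it isolates why $c=\frac{b}{a-p}$ is forced (the profile $c\,t^{-p}$ is stationary to second order) and makes the two error exponents $p+1$ (second-order correction to the profile) and $a$ (homogeneous decay and transient) transparent, at the cost only of the threshold-and-constant bookkeeping you already flag.
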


In order to apply Chung's lemma, we need to first derive an upper bound on $\E[\|x_t-x_*\|^2]$ for any $t\geq 0$, which will be incorporated into the constant~$b$ in the lemma.
To this end, we start with the inequality~\eqref{eqn:bcosw-contraction} and use the law of total expectation to get
\begin{align}
\E\bigl[\norm{x_{t}-x_\ast}^2\bigr] 
&= \E\!\left[\E_{t-1} [\norm{x_{t}-x_\ast}^2\bigr]\right] \nonumber \\
&\leq \E\!\left[(1-\alpha_{t-1}\lambda)^2 \|x_{t-1}-x_*\|^2 + \alpha_{t-1}^2 B_\ast\right] \nonumber \\
&\leq \E\!\left[ \|x_{t-1}-x_*\|^2\right] + \alpha_{t-1}^2 B_\ast \nonumber \\
&\leq \E\bigl[ \|x_{0}-x_*\|^2\bigr] +  B_\ast\sum_{s=0}^{t-1}\alpha_s^2,
\label{eqn:dist-upper-bound}
\end{align}
where the last term is bounded if $\sum_{t=1}^\infty\alpha_t^2<\infty$. 

\bigskip

\begin{proof}[Proof of Theorem~\ref{thm:bcosw_1/t}]
Taking total expectation on both sides of~\eqref{eqn:bcosw-contraction}, we obtain
\begin{align}
\E \bigl[\norm{x_{t+1}-x_\ast}^2\bigr] 
&\leq (1-\alpha_t \lambda)^2\E\bigl[\norm{x_{t}-x_\ast}^2\bigr] + \alpha_t^2 B_\ast  \nonumber \\
&= (1-2\alpha_t \lambda)\E\bigl[\norm{x_{t}-x_\ast}^2\bigr] + \alpha_t^2 \left(\lambda^2\E\bigl[\|x_t-x_*\|^2\bigr]+B_\ast\right) .
\label{eqn:dist-expand-square}
\end{align}
We bound~$\E[\|x_t-x_*\|^2]$ by substituting $\alpha_t=\frac{\alpha}{t+1}$ into~\eqref{eqn:dist-upper-bound}, which leads to
\begin{align*}
\E\bigl[\norm{x_{t}-x_\ast}^2\bigr] 
& \leq \E\bigl[ \|x_{0}-x_*\|^2\bigr] +  B_\ast\sum_{s=0}^{\infty}\frac{\alpha^2}{(s+1)^2} \\
&= \E\bigl[ \|x_{0}-x_*\|^2\bigr] +  \frac{\pi^2}{6} \alpha^2 B_\ast.
\end{align*}
Applying the above bound to the last term of~\eqref{eqn:dist-expand-square} and setting $\alpha_t=\frac{\alpha}{t+1}$, we get
\begin{align*}
\E \bigl[\norm{x_{t+1}-x_\ast}^2\bigr] 
&\leq \left(1-\frac{2\alpha \lambda}{t+1}\right)\E\bigl[\norm{x_{t}-x_\ast}^2\bigr] + \frac{\alpha^2\!\left(\lambda^2\E\bigl[\|x_0-x_*\|^2\bigr] +(\pi^2/6)\alpha^2\lambda^2 B_\ast + B_\ast\right)}{(t+1)^2} \\
&\leq \left(1-\frac{2\alpha \lambda}{t+1}\right)\E\bigl[\norm{x_{t}-x_\ast}^2\bigr] + \frac{\alpha^2\!\left(\lambda^2\E\bigl[\|x_0-x_*\|^2\bigr] +(1+\pi^2/6) B_\ast\right)}{(t+1)^2} ,
\end{align*}
where in the second inequality we used the assumption $\alpha\lambda<1$.

Let $X_{t+1}=\E[\|x_t-x_*\|^2]$, $a=2\alpha\lambda$, $p=1$ and $b=\alpha^2\left(\lambda^2\E[\|x_0-x_*\|^2]+(1+\pi^2/6)B_\ast\right)$. 
Then the above inequality can be written as
\[
X_{t+2} \leq \left(1-\frac{a}{t+1}\right) X_{t+1} + \frac{b}{(t+1)^{p+1}},
\]
which is in the form of~\eqref{eqn:chung1-recursion} except with the index shift $t\to t+1$.
Applying Lemma~\ref{lem:chung1} gives the desired result.
\end{proof}

\smallskip

Theorem~\ref{thm:bcosw_1/t} requires $\alpha\lambda>1/2$ to obtain the $\cO(1/t)$ rate of convergence. We can remove this condition by working with the stepsize schedule $\alpha_t=\frac{\alpha}{(t+1)^p}$ with $p\in(\frac{1}{2},1)$. 
%The following theorem states that in this case we have $\E[\|x_t-x_*\|^2]=\cO(1/t^p)$.
In this case we have an \emph{asymptotic} $\cO(1/t^p)$ rate, as stated in the following theorem.

\begin{theorem}\label{thm:bcosw_1/t^p}
Consider the algorithm~\eqref{eqn:bcosw-conceptual} with stepsize schedule $\alpha_t = \frac{\alpha}{(t+1)^p}$ where $p \in \left(\frac{1}{2},1\right)$ and the constant $\alpha$ satisfies $\alpha \lambda < 1$. 
Then under Assumption~\ref{assum:aiming}, we have
\begin{equation}\label{eqn:bcosw_1/t^p}
    \limsup_{t\to \infty}\, (t+1)^{p}\,\E \bigl[\norm{x_{t}-x_\ast}^2\bigr] \leq \frac{\alpha \left(\lambda^2 \E\bigl[\norm{x_{0}-x_\ast}^2\bigr] + (1+K_p) B_\ast \right)}{2\lambda},
\end{equation}
where $K_p=\sum_{t=1}^\infty \frac{1}{(t+1)^{2p}}$ is finite for $p\in(\frac{1}{2},1)$.
\end{theorem}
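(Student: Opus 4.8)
The plan is to collapse the vector iteration into a scalar recursion for $a_t:=\E[\norm{x_t-x_\ast}^2]$ and then apply a Chung-type lemma tailored to the slower schedule $\alpha_t=\alpha/(t+1)^p$. I would follow the proof of Theorem~\ref{thm:bcosw_1/t} line by line up to the expansion of the square: taking total expectation in the one-step contraction~\eqref{eqn:bcosw-contraction} and writing $(1-\alpha_t\lambda)^2=1-2\alpha_t\lambda+\alpha_t^2\lambda^2$ gives
\[
a_{t+1}\le\Bigl(1-\tfrac{2\alpha\lambda}{(t+1)^p}\Bigr)a_t+\tfrac{\alpha^2}{(t+1)^{2p}}\bigl(\lambda^2 a_t+B_\ast\bigr).
\]
The time-varying factor $b_t:=\alpha^2(\lambda^2 a_t+B_\ast)$ is uniformly bounded: since $p\in(\tfrac12,1)$ we have $\sum_{s\ge0}\alpha_s^2=\alpha^2\sum_{s\ge0}(s+1)^{-2p}=\alpha^2(1+K_p)<\infty$, so the uniform estimate~\eqref{eqn:dist-upper-bound} gives $a_t\le a_0+\alpha^2(1+K_p)B_\ast$ for all $t$, whence $\sup_t b_t<\infty$. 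Tracking these constants (using $\alpha\lambda<1$, hence $\alpha^2\lambda^2\le1$) is exactly what produces the explicit numerator $\alpha^2\bigl(\lambda^2 a_0+(1+K_p)B_\ast\bigr)$ reported in~\eqref{eqn:bcosw_1/t^p}; I would cite~\eqref{eqn:dist-upper-bound} for this bookkeeping rather than redo it.

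The essential new ingredient is a generalization of Chung's lemma (Lemma~\ref{lem:chung1}) from the $1/t$ schedule to the sub-linear $1/t^p$ schedule: if $X_{t+1}\le(1-c/t^p)X_t+b/t^{2p}$ with $c>0$ and $p\in(0,1)$, then $\limsup_{t\to\infty}t^p X_t\le b/c$. Matching $c=2\alpha\lambda$ with the numerator constant above (so the additive exponent is $2p$ and the rate exponent is $p$) yields the claimed bound $b/c=\alpha(\lambda^2 a_0+(1+K_p)B_\ast)/(2\lambda)$. Two structural features explain the differences from Theorem~\ref{thm:bcosw_1/t}. First, whereas the $1/t$ version needs $a>p$ (here $2\alpha\lambda>1$, i.e.\ $\alpha\lambda>\tfrac12$), the sub-linear version imposes \emph{no} lower threshold on $c$: the homogeneous product $\prod_s(1-c/(s+1)^p)$ decays like $\exp(-\tfrac{c}{1-p}t^{1-p})$, faster than any polynomial, so every $c>0$ works. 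This is precisely why the hypothesis $\alpha\lambda>\tfrac12$ can be dropped. Second, because the transient from $x_0$ is killed stretched-exponentially rather than polynomially, the natural conclusion is asymptotic, giving a $\limsup$ at rate $\cO(1/t^p)$ rather than a bound valid at every finite $t$.

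The main obstacle is establishing this generalized lemma and certifying that its leading constant is exactly $b/c$. I would prove it by unrolling the recursion into
\[
X_t\le\Bigl(\prod_{s<t}(1-c/(s+1)^p)\Bigr)X_0+b\sum_{s<t}(s+1)^{-2p}\prod_{s<j<t}(1-c/(j+1)^p),
\]
bounding each product by $\exp(-c\sum(\cdot)^{-p})$ and using $\sum_{j=s+1}^{t}(j+1)^{-p}\approx\tfrac{1}{1-p}\bigl(t^{1-p}-(s+1)^{1-p}\bigr)$; a Laplace/Watson-type estimate of the resulting sum, dominated by indices $s$ near $t$ where the exponent vanishes, extracts the leading term $\tfrac{b}{c}\,t^{-p}$. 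A cleaner alternative sidesteps the asymptotic analysis: since $p\in(\tfrac12,1)$ gives $\sum\alpha_t=\infty$ and $\sum\alpha_t^2<\infty$, Theorem~\ref{thm:bcosw-conceptual-as} already yields $a_t\to0$, so $\limsup_t b_t=\alpha^2 B_\ast$ and a direct comparison against the sequence $(b/c)(t+1)^{-p}$ recovers the stated bound with room to spare. I expect the only delicate points to be tracking the constants and justifying the interchange of $\limsup$ with the recursion; everything else parallels Theorem~\ref{thm:bcosw_1/t}.
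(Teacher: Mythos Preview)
Your main approach is essentially the paper's: derive the scalar recursion from~\eqref{eqn:bcosw-contraction}, uniformly bound $\lambda^2 a_t+B_\ast$ via~\eqref{eqn:dist-upper-bound}, and feed the result into a Chung-type lemma for the $1/t^p$ schedule. The only difference is that the paper does not prove this lemma from scratch; it simply quotes it as a classical result (Chung~\cite[Lemma~4]{chung1954}, restated in the paper as Lemma~\ref{lem:chung_lem4}), which gives exactly $\limsup t^{q-p}X_t\le b/a$ for $X_{t+1}\le(1-a/t^p)X_t+b/t^q$ with $0<p<1$ and $p<q$. Your Laplace/Watson unrolling would recover this, but citing the lemma is far shorter and yields the constant $b/c$ directly.

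Your ``cleaner alternative'' has a gap and does not actually bypass the lemma. Theorem~\ref{thm:bcosw-conceptual-as} gives $\norm{x_t-x_\ast}\to 0$ almost surely, not $a_t=\E[\norm{x_t-x_\ast}^2]\to 0$; passing from one to the other requires a uniform-integrability or dominated-convergence argument you have not supplied. And even granting $\limsup_t b_t=\alpha^2 B_\ast$, concluding $\limsup t^p X_t\le b/c$ from the recursion is precisely the content of the Chung lemma you were trying to avoid, so the alternative collapses back into the main route.
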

The proof is based on another lemma of \citet{chung1954}, restated below.

\begin{lemma}[{\citet[Lemma 4]{chung1954}}]
\label{lem:chung_lem4}
    Suppose that $\{X_t\}$ is a sequence of real numbers such that for all $t\geq 1$, 
    \begin{equation}\label{eq:chung_inequality_cpt3_lem4}
        X_{t+1} \leq \left(1 - \frac{a_t}{t^p}\right) X_t + \frac{b}{t^{q}},
    \end{equation}
    where $0< p < 1, p<q, a_t \geq a > 0, b>0$. Then
    \[
    \limsup_{t \to \infty}\, t^{q-p} X_t \leq \frac{b}{a}.
    \]
\end{lemma}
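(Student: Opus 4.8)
The plan is to prove the lemma by a \emph{comparison (supersolution) argument}: I will show that for every constant $c > b/a$ the scaled sequence $Z_t := t^{q-p}X_t$ eventually stays below $c$, and then let $c \downarrow b/a$. Throughout I write $r := q-p > 0$ and use the standing sign hypothesis $X_t \ge 0$ (which holds in all our applications and is genuinely needed, since the factor $1 - a_t/t^p$ may be negative). The engine of the whole proof is a single one-step inequality: plugging the candidate bound $X_t \le c/t^r$ into~\eqref{eq:chung_inequality_cpt3_lem4}, using $q = r+p$, and expanding $1/t^r - 1/(t+1)^r = r\,t^{-(r+1)}(1+\cO(1/t))$, one sees that the desired $X_{t+1} \le c/(t+1)^r$ is implied by $b - a_t c \le -\,cr\,t^{-(1-p)}\bigl(1+\cO(1/t)\bigr)$. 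Since $a_t \ge a$ and $c > b/a$ make the left-hand side a fixed negative number, while the right-hand side tends to $0^-$, this holds for all $t$ beyond some threshold $T_c$. This is exactly where $p<1$ (equivalently $q < r+1$) is used: it guarantees the restoring term decays slower than the correction. Moreover $T_c$ can be chosen uniformly over all $c$ lying above any fixed constant exceeding $b/a$, since enlarging $c$ only strengthens the inequality $b - a_t c < 0$.

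First I would establish that $Z_t$ is \emph{bounded}. Fix any $C_1 > b/a$ with associated threshold $T_1$, and set $C := \max\bigl\{C_1,\,\max_{0\le t\le T_1} Z_t\bigr\}$. Because $C \ge C_1$, the one-step inequality remains valid for all $t \ge T_1$, and the choice of $C$ supplies the base case $Z_{T_1}\le C$. (When $1 - a_t/t^p < 0$ the monotonicity step is replaced by $X_{t+1}\le b/t^q \le C/(t+1)^r$, valid for large $t$ using $X_t\ge 0$.) Induction then gives $Z_t = t^r X_t \le C$ for all $t \ge T_1$, hence $Z_t \le \bar Z$ for some finite $\bar Z$ and all $t$.

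The crux — and the step I expect to be the main obstacle — is showing the sequence genuinely descends, namely $\liminf_t Z_t \le b/a$, which supplies the \emph{base case} for the sharp bound. I would argue by contradiction: if $Z_t \ge b/a + 2\epsilon'$ for all large $t$, I rewrite~\eqref{eq:chung_inequality_cpt3_lem4} as a recursion for $Z_t$, which carries a drift term $\bigl((1+1/t)^r-1\bigr)Z_t = \cO(\bar Z/t)$ (here the boundedness from the previous step is essential) and a restoring term $-\,\tfrac{1}{t^p}(a_t Z_t - b) \le -\,\tfrac{2a\epsilon'}{t^p}$. Since $p<1$, the $t^{-1}$ drift is dominated by the $t^{-p}$ restoring term, so $Z_{t+1}-Z_t \le -\tfrac{a\epsilon'}{t^p}$ for large $t$; summing and invoking the divergence $\sum_t t^{-p} = \infty$ forces $Z_t \to -\infty$, contradicting $Z_t > 0$. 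Hence $\liminf_t Z_t \le b/a$.

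Finally I combine the pieces. Given $\epsilon>0$, set $c = b/a+\epsilon$ with threshold $T_c$. Since $\liminf_t Z_t \le b/a < c$, there is some $t_0 \ge T_c$ with $Z_{t_0}\le c$; the one-step inequality then propagates $Z_t \le c$ to all $t \ge t_0$, so $\limsup_t Z_t \le c = b/a+\epsilon$. Letting $\epsilon\downarrow 0$ yields $\limsup_{t\to\infty} t^{q-p} X_t \le b/a$, as claimed. The only routine computations are the asymptotic expansions $(1+1/t)^{\pm r} = 1 \pm r/t + \cO(1/t^2)$ underlying the one-step inequality; all the conceptual content lies in the interplay between the unsummable restoring rate $t^{-p}$ (thanks to $p<1$) and the faster $t^{-1}$ and $t^{-(1-p)}$ corrections.
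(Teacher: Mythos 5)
The paper offers no proof of this lemma at all --- it is quoted directly from Chung \cite[Lemma~4]{chung1954} --- so there is no internal argument to compare against; judged on its own, your proof is correct and is in substance the classical argument behind Chung's lemma: a one-step barrier computation showing that $X_t \le c/t^{q-p}$ with $c>b/a$ propagates for large $t$ (precisely because the correction $c\,r\,t^{-(1-p)}$ vanishes as $p<1$), a forced-descent contradiction $Z_{t+1}-Z_t \le -a\epsilon'\,t^{-p}$ combined with $\sum_t t^{-p}=\infty$ to guarantee the barrier is eventually crossed, and the limit $c\downarrow b/a$. Your one-step algebra checks out ($q=r+p$ turns the requirement into $a_t c - b \ge c\,r\,t^{p-1}(1+\cO(1/t))$), and your uniformity claim for the threshold $T_c$ over $c\ge C_1$ is right: dividing by $c$, the condition reads $a_t - b/c \ge r\,t^{p-1}(1+\cO(1/t))$, whose left side is monotone increasing in $c$; this is exactly what legitimizes the boundedness step, where $c=C$ depends on the trajectory. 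The liminf step is also clean, since the rewriting $Z_{t+1}-Z_t \le \bigl((1+1/t)^r-1\bigr)Z_t - (1+1/t)^r\,t^{-p}(a_t Z_t - b)$ is pure algebra requiring no sign assumption on the coefficient $1-a_t/t^p$, and the drift $\cO(\bar{Z}/t)$ is indeed dominated by the restoring term $t^{-p}$ because $p<1$.

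Your side remark about nonnegativity is not pedantry but a genuine repair of the statement: as written, the lemma permits arbitrary real $X_t$ and only bounds $a_t$ from below, and it is then false --- if $a_t$ exceeds $t^p$ at arbitrarily large times, a negative $X_t$ is flipped by the negative coefficient into an arbitrarily large positive value consistent with the recursion (choose $X_t=-1$, $a_t=t^p(1+M_t)$ with $M_t\to\infty$), so $\limsup_t t^{q-p}X_t=\infty$ can occur. Either $X_t\ge 0$ (your choice, under which a negative coefficient yields the even better bound $X_{t+1}\le b/t^q$) or a cap $a_t\le t^p$ restores the claim; the paper's application satisfies both concerns, since there $X_{t+1}=\E\bigl[\norm{x_t-x_\ast}^2\bigr]\ge 0$ and $a_t=2\alpha\lambda$ is constant, so the coefficient is eventually positive. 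In short: the proposal is sound, fills in the one hypothesis the quoted statement leaves implicit, and coincides with the standard proof of the cited result.
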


\begin{proof}[Proof of Theorem~\ref{thm:bcosw_1/t^p}]
Substituting $\alpha_t=\frac{\alpha}{(t+1)^p}$ into~\eqref{eqn:dist-upper-bound}, we obtain
\[
\E\bigl[\|x_t-x_*\|^2\bigr] 
\leq \E\bigl[\|x_0-x_*\|^2\bigr] 
+ B_\ast\sum_{s=0}^\infty \frac{\alpha^2}{(t+1)^{2p}}
= \E\bigl[\|x_0-x_*\|^2\bigr] + K_p\alpha^2 B_\ast.
\]
Taking total expectation of both sides of~\eqref{eqn:bcosw-contraction} and plugging in $\alpha_t=\frac{\alpha}{(t+1)^p}$, we get
\begin{align*}
\E \bigl[\norm{x_{t+1}-x_\ast}^2\bigr] 
        &\leq (1-\alpha_{t} \lambda)^2 \E\bigl[\norm{x_{t}-x_\ast}^2\bigr] + \alpha_t^2 B_\ast\\
        &= \left(1-2\alpha_t \lambda\right) \E[\norm{x_{t}-x_\ast}^2] + \alpha_t^2\left(B_\ast + \lambda^2 \E\bigl[\norm{x_{t}-x_\ast}^2\bigr]\right)\\
        &\leq \left(1-\frac{2\alpha \lambda}{(t+1)^p}\right) \E[\norm{x_{t}-x_\ast}^2] + \frac{\alpha^2 \left(\lambda^2 \E\bigl[\norm{x_{0}-x_\ast}^2\bigr]+(1+K_p)B_\ast \right)}{(t+1)^{2p}}.
\end{align*}
The desired result follows by applying Lemma~\ref{lem:chung_lem4} with the definitions
$X_{t+1}=\E[\norm{x_{t}-x_\ast}^2]$, $a_t=a=2\alpha \lambda$, $b=\alpha^2 \left(\lambda^2 \E[\norm{x_{0}-x_\ast}^2] + (1+K_p)B_\ast\right)$ and $q=2p$.
\end{proof}

\subsubsection{Convergence analysis with $\lambda=0$}
%\subsubsection{Analysis for not using decoupled weight decay}

Here we briefly explain the convergence analysis of BCOS in the case of $\lambda=0$.
%or in the case of $\lambda>0$ but without using decoupled weight decay.
In other words, we consider
\begin{equation}\label{eqn:bcos-no-dwd}
x_{t+1} = x_t - \widetilde\gamma_t\odot d_t = x_t - \alpha_t\frac{d_t}{\sqrt{\E_t[d_t^2]}}.
\end{equation}
The aiming condition~\eqref{eqn:aiming} becomes
\begin{equation}\label{eqn:aiming-wd0}
\biggl\langle x_t-x_*,\,\frac{\E_t[d_t]}{\sqrt{\E_t[d_t^2]}}\biggr\rangle \geq 0 .
\end{equation}
In order to establish similar convergence guarantees as before, we need slightly stronger aiming conditions, as discussed below. 
\begin{enumerate}
\item
First, with $\lambda=0$, we have the following inequality in place of~\eqref{eqn:bcosw-contraction} (see Lemma~\ref{lem:bcosw-one-step}):
\[
\E_t\bigl[\|x_{t+1}-x_*\|^2\bigr] \leq \|x_t-x_*\|^2 - \alpha_t \biggl\langle x_t-x_*,\,\frac{\E_t[d_t]}{\sqrt{\E_t[d_t^2]}}\biggr\rangle + \alpha_t^2 n.
\]
To have contraction with sufficiently small~$\alpha_t$, we need the aiming condition~\eqref{eqn:aiming-wd0} to hold with strict inequality for any $x_t\neq x_*$.
\item 
Almost sure convergence as in Theorem~\ref{thm:bcosw-almost-sure} requires a slightly stronger condition
\[
\biggl\langle x_t-x_*,\,\frac{\E_t[d_t]}{\sqrt{\E_t[d_t^2]}}\biggr\rangle \geq \phi(\|x_t-x_*\|),
\]
where $\phi$ is any strictly positive and continuous function.
\item 
To obtain sublinear rates of convergence as in Theorems~\ref{thm:bcosw_1/t} and~\ref{thm:bcosw_1/t^p}, we need an even stronger aiming condition, specifically,
\begin{equation}\label{eqn:aiming-mu}
\biggl\langle x_t-x_*,\,\frac{\E_t[d_t]}{\sqrt{\E_t[d_t^2]}}\biggr\rangle \geq \mu \|x_t-x_*\|^2,
\end{equation}
with some $\mu>0$ for all $t\geq 0$.
\end{enumerate}

\subsection{Analysis for practical BCOS}
\label{sec:analysis-practical}

In this section, we provide convergence analysis for the practical BCOS method
\begin{equation}\label{eq:bcosw-practical-d}
    x_{t+1} = (1-\alpha_t\lambda) x_t - \alpha_t \frac{d_t}{\sqrt{v_t+\epsilon}},
\end{equation}
where $v_t$ is an online estimator of the conditional second moment $\E_t[d_t^2]$.
As explained at the beginning of Section~\ref{sec:convergence}, our analysis is based on bounding the difference between the expected steps under the practical method and the conceptual method, in the form of~\eqref{eqn:step-diff}.
To proceed, we first decompose their difference into two terms, i.e., 
\begin{equation}\label{eqn:difference-bound-in-two}
    \left|\frac{\E_t[d_t ]}{\sqrt{\E[d_t^2]}} - \E_t\biggl[\frac{d_t}{\sqrt{v_t \!+\!\epsilon}}\biggr]\right| \leq \left| \frac{\E_t[d_t ]}{\sqrt{\E[d_t^2]}} - \frac{\E_t[d_t]}{\sqrt{\E_t[v_t]\!+\!\epsilon}}\right| + \left|\frac{\E_t[d_t]}{\sqrt{\E_t[v_t]\!+\!\epsilon}} - \E_t\biggl[\frac{d_t}{\sqrt{v_t \!+\!\epsilon}} \biggr]\right|,
\end{equation}
and then bound the two error terms on the right-hand side separately. Intuitively, the magnitudes of these two error terms are determined by the quality of the estimator~$v_t$.
We make the following assumption on the bias of $v_t$.

\begin{assumption}[Bias of second-moment estimator]
\label{assum:v_t-bias}
There exists $\tau\in(0,1)$ and $\epsilon>0$ such that for all $t\geq 0$,
\begin{equation}\label{eqn:estimator-bias}
\left |\E_t[v_t] - \E_t [d_t^2] \right| \leq \tau \E_t[d_t^2] +\epsilon.
\end{equation}
\end{assumption}

Here $\epsilon$ is an additive error bound that leaves room for $\E_t[v_t]$ when $\E_t[d_t^2]$ is very small.
There is no loss of generality by restricting $\tau<1$ in the sense that we can always find an estimator that satisfies the assumption with sufficiently large~$\epsilon$.
As an extreme case, by setting $\epsilon\geq\|\E_t[d_t^2]\|_\infty$ for all~$t$, the trivial estimator $v_t=0$ satisfies~\eqref{eqn:estimator-bias} for any $\tau\in(0,1)$.

\subsubsection{Bounding the difference between conceptual and practical updates}

The following lemma derives an upper bound on the first term on the right-hand side of~\eqref{eqn:difference-bound-in-two}.
The proof is given in Appendix~\ref{sec:appendix-frac-expect}.

\begin{lemma}\label{lem:bound-frac-expect}
Under Assumption~\ref{assum:v_t-bias}, it holds that:
\begin{equation}
\left| \frac{\E_t[d_t]}{\sqrt{\E_t[d_t^2]}} - \frac{\E_t[d_t]}{\sqrt{\E_t[v_t]+\epsilon}} \right| 
%\leq \frac{4\tau+3\tau^2+\cO(\tau^3)}{8}
\leq \frac{\tau+\cO(\tau^2)}{2}
\left|\frac{\E_t[d_t]} {\sqrt{\E_t[d_t^2]}}\right| + \cO(\epsilon).
 \end{equation}
\end{lemma}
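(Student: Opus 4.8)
The plan is to prove the bound coordinate by coordinate, since both sides of the claimed inequality are defined entrywise. I would fix an index $k$ and abbreviate $a:=\E_t[d_{t,k}^2]$, $w:=\E_t[v_{t,k}]$ and $\mu:=\E_t[d_{t,k}]$. The starting point is to factor the common numerator out of the difference,
\[
\left|\frac{\mu}{\sqrt{a}}-\frac{\mu}{\sqrt{w+\epsilon}}\right|
=\frac{|\mu|}{\sqrt{a}}\,\Bigl|1-\sqrt{\tfrac{a}{w+\epsilon}}\Bigr|,
\]
so that $\bigl|\mu/\sqrt{a}\bigr|=\bigl|\E_t[d_{t,k}]/\sqrt{\E_t[d_{t,k}^2]}\bigr|$ appears explicitly and the whole problem reduces to controlling the scalar factor $\bigl|1-\sqrt{a/(w+\epsilon)}\bigr|$ using the bias Assumption~\ref{assum:v_t-bias}.

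Next I would convert Assumption~\ref{assum:v_t-bias}, namely $|w-a|\le \tau a+\epsilon$, into two-sided control of the ratio $a/(w+\epsilon)$: the lower side $w+\epsilon\ge (1-\tau)a$ gives $a/(w+\epsilon)\le (1-\tau)^{-1}$, while the upper side $w+\epsilon\le (1+\tau)a+2\epsilon$ gives $a/(w+\epsilon)\ge \bigl(1+\tau+2\epsilon/a\bigr)^{-1}$. Taking square roots and invoking the two elementary estimates $(1-\tau)^{-1/2}-1=\tfrac{\tau}{2}+\cO(\tau^2)$ and $(1+s)^{-1/2}\ge 1-\tfrac{s}{2}$ for $s\ge 0$ (the latter because $(1+s)^{-1/2}$ is convex and thus lies above its tangent at $s=0$), I would obtain
\[
\Bigl|1-\sqrt{\tfrac{a}{w+\epsilon}}\Bigr|\;\le\;\frac{\tau+\cO(\tau^2)}{2}+\frac{\epsilon}{a}.
\]

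Finally I would multiply through by $|\mu|/\sqrt{a}$. The first summand reproduces exactly the advertised multiplicative term $\tfrac{\tau+\cO(\tau^2)}{2}\,\bigl|\E_t[d_{t,k}]/\sqrt{\E_t[d_{t,k}^2]}\bigr|$, while the second becomes $\epsilon|\mu|/a^{3/2}$, which I bound by $\epsilon/a$ using the Cauchy--Schwarz/Jensen inequality $|\E_t[d_{t,k}]|\le\sqrt{\E_t[d_{t,k}^2]}$ (so that $|\mu|\le\sqrt{a}$); this is the $\cO(\epsilon)$ additive term. Reassembling the coordinatewise estimates then yields the stated vector inequality.

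The main obstacle is the additive $\epsilon$ contribution. The multiplicative $\tau$ factor falls out cleanly, but the residual is genuinely of the form $\epsilon/\E_t[d_{t,k}^2]$, so labeling it $\cO(\epsilon)$ relies on treating $\epsilon$ as small relative to the conditional second moment $\E_t[d_{t,k}^2]$ — consistent with the scale-invariant design of BCOS, where $\sqrt{\rho_{t,k}}=\bigl|\E_t[d_{t,k}]\bigr|/\sqrt{\E_t[d_{t,k}^2]}$ is $\cO(1)$ and $\epsilon$ sits on the scale of $v_{t,k}\approx\E_t[d_{t,k}^2]$. The care required is to keep the floor $w+\epsilon\ge(1-\tau)a>0$ and the bound $|\mu|\le\sqrt{a}$ coupled throughout, rather than crudely bounding $|\mu|/\sqrt{a}\le 1$ too early, so that both the multiplicative factor and the additive remainder emerge in their sharp forms.
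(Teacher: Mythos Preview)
Your argument is correct and lands on the same bound, but the route differs from the paper's. The paper Taylor-expands $\phi(y)=1/\sqrt{y}$ at $y=\E_t[d_t^2]$ with perturbation $\delta=\E_t[v_t]+\epsilon-\E_t[d_t^2]$, bounds $|\delta|\le\tau\E_t[d_t^2]+2\epsilon$ via Assumption~\ref{assum:v_t-bias}, carries the full series $\sum_{p\ge1}\frac{(2p)!}{4^p(p!)^2}\tau^p$ term by term, and reads off $\tau/2+\cO(\tau^2)$ from the $p=1$ coefficient. Your approach is more elementary: you factor out $|\mu|/\sqrt{a}$ at the outset, sandwich the ratio $a/(w+\epsilon)$ between $(1+\tau+2\epsilon/a)^{-1}$ and $(1-\tau)^{-1}$ directly from the bias assumption, and invoke only the convexity inequality $(1+s)^{-1/2}\ge 1-s/2$ together with a one-term expansion of $(1-\tau)^{-1/2}$. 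What you gain is transparency and no need to justify convergence of an infinite series; what the paper gains is an explicit closed form for the higher-order $\tau$ remainder. The hidden constant in your additive term, $\epsilon/\E_t[d_{t,k}^2]$, is exactly the same dependence buried in the paper's $\cO(\epsilon)$; you are simply more candid about it.
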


\bigskip

To bound the second term on the right-hand side of~\eqref{eqn:difference-bound-in-two}, we need the following lemma, whose proof is given in Appendix~\ref{sec:proof-lemma-ratio-sqrt}.
% \todo{$\frac{Y}{\sqrt{Z}+\epsilon}$, actually, this may not look good.}
\begin{lemma}\label{lem:expectation_ratio_sqrt}
   Let $Y,Z$ be two random variables and $Z>0$ almost surely, then 
\begin{equation}\label{eq:lem:expectation_ratio_sqrt}
\begin{aligned}
    \E \left[\frac{Y}{\sqrt{Z}}\right] =& \frac{\E[Y]}{\sqrt{\E[Z]}} \left(1 - \frac{\cov(Y,Z)}{2\E[Y]\E[Z]} + \frac{3\var(Z)}{8\E[Z]^2}
+ \cO\left(\frac{\Var(Z)}{\E[Z]^2}\right)
\right) 
\end{aligned}
\end{equation}
where $\cO\left(\frac{\Var(Z)}{\E[Z]^2}\right)$ includes higher-order statistical terms
\[
\frac{\E\bigl[(Y-\E[Y])(Z-\E[Z])^{p-1}\bigr]}{\E[Y]\E[Z]^{p-1}}
\qquad\text{and}\qquad
\frac{\E\bigl[(Z-\E[Z])^p\bigr]}{\E[Z]^{p}},
\]
for $p\geq 3$.
\end{lemma}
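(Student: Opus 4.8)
The plan is to treat the claim as a (formal) Taylor expansion of the map $z\mapsto z^{-1/2}$ about $z=\E[Z]$, organized so that each order of the expansion produces a statistical moment of exactly the stated form. First I would normalize the fluctuation of $Z$ by writing $Z=\E[Z](1+W)$ with $W:=(Z-\E[Z])/\E[Z]$, so that $\E[W]=0$ and $\E[W^p]=\E[(Z-\E[Z])^p]/\E[Z]^p$; in particular $\E[W^2]=\Var(Z)/\E[Z]^2$. With this substitution,
\[
\frac{Y}{\sqrt{Z}}=\frac{Y}{\sqrt{\E[Z]}}\,(1+W)^{-1/2},
\]
and I would apply the generalized binomial series $(1+W)^{-1/2}=\sum_{p\ge 0}\binom{-1/2}{p}W^p=1-\tfrac12 W+\tfrac38 W^2-\cdots$.

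Taking expectations term by term gives
\[
\E\!\left[\frac{Y}{\sqrt{Z}}\right]=\frac{1}{\sqrt{\E[Z]}}\sum_{p\ge 0}\binom{-1/2}{p}\,\E\bigl[Y W^p\bigr],
\]
and the heart of the argument is to evaluate $\E[Y W^p]=\E[Y(Z-\E[Z])^p]/\E[Z]^p$ and split each factor via
\[
\E\bigl[Y(Z-\E[Z])^p\bigr]=\E[Y]\,\E\bigl[(Z-\E[Z])^p\bigr]+\E\bigl[(Y-\E[Y])(Z-\E[Z])^p\bigr].
\]
The $p=0$ term is $\E[Y]$; the $p=1$ term uses $\E[Z-\E[Z]]=0$ together with $\E[(Y-\E[Y])(Z-\E[Z])]=\cov(Y,Z)$, so with the coefficient $-\tfrac12$ it produces exactly the $-\cov(Y,Z)/(2\E[Y]\E[Z])$ correction once I factor $\E[Y]/\sqrt{\E[Z]}$ out front. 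The $p=2$ term, with coefficient $\tfrac38$, splits into the $\E[Y]\Var(Z)$ piece — giving the displayed $3\Var(Z)/(8\E[Z]^2)$ term — plus the mixed third-order piece $\E[(Y-\E[Y])(Z-\E[Z])^2]$, which is precisely one of the higher-order statistical terms absorbed into the $\cO(\Var(Z)/\E[Z]^2)$ remainder.

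Collecting the $p=0,1,2$ contributions and factoring $\E[Y]/\sqrt{\E[Z]}$ out front yields exactly the claimed identity; every $p\ge 3$ term, by the same decomposition, contributes a central moment $\E[(Z-\E[Z])^p]/\E[Z]^p$ of $Z$ and a mixed moment $\E[(Y-\E[Y])(Z-\E[Z])^p]/(\E[Y]\E[Z]^p)$, matching the two families of higher-order terms listed in the statement. The main obstacle is rigorously justifying the termwise interchange of expectation and the infinite binomial series: since $(1+W)^{-1/2}$ converges pointwise only for $W>-1$ (i.e.\ $Z<2\E[Z]$) and the series is not uniformly controlled when $Z$ can be small, the equality is best read as an asymptotic expansion in the small-fluctuation regime where the coefficient of variation $\Var(Z)/\E[Z]^2$ is small. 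To make this precise I would either assume $W$ is bounded below away from $-1$ so the series converges absolutely, or truncate the binomial expansion at a finite order with an explicit Lagrange remainder and bound the tail $\E[Y\,R_p(W)]$ using moment hypotheses on $Y$ and $Z$; the $\cO(\cdot)$ notation then encodes that all omitted terms are of order at least $\Var(Z)/\E[Z]^2$ relative to the leading factor $\E[Y]/\sqrt{\E[Z]}$.
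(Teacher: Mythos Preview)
Your proposal is correct and yields exactly the same terms as the paper's proof. The paper's organization is slightly different: it first proves a general helper lemma expressing $\E[\phi(X)]$ as a multivariable Taylor expansion of a smooth $\phi$ about $\E[X]$, then specializes to $X=(Y,Z)$ and $\phi(y,z)=y/\sqrt{z}$, computing all mixed partials $\partial^p\phi/\partial y^q\partial z^{p-q}$ (which vanish for $q\geq 2$ by linearity in~$y$). Your univariate binomial expansion in $W=(Z-\E[Z])/\E[Z]$ exploits that linearity up front and then splits $\E[Y(Z-\E[Z])^p]$ into a centered-$Y$ part and an $\E[Y]$ part; this is more direct for this specific $\phi$, while the paper's bivariate route is more general-purpose but heavier on bookkeeping. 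Both treatments are formal expansions without a rigorous convergence argument, so your closing remarks on justifying the interchange of expectation and series are already more careful than the paper itself.
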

\iffalse
\todo{An alternative version of Lemma 5.6 with expansion on function $\frac{Y}{\sqrt{Z}+\epsilon}$}
\begin{lemma}\label{lem:expectation_ratio_sqrt}
   Let $Y,Z$ be two random variables and $Z>0$ almost surely, then 
\begin{equation}\label{eq:lem:expectation_ratio_sqrt}
\begin{aligned}
    \E \left[\frac{Y}{\sqrt{Z+\epsilon}}\right] =& \frac{\E[Y]}{\sqrt{\E[Z]+\epsilon}} \left(1 - \frac{\cov(Y,Z)}{2\E[Y](\E[Z]+\epsilon)} + \frac{3\var(Z)}{8(\E[Z]+\epsilon)^2}
+ \cO\left(\frac{\Var(Z)}{(\E[Z]+\epsilon)^2}\right)
\right) 
\end{aligned}
\end{equation}
where $\cO\left(\frac{\Var(Z)}{(\E[Z]+\epsilon)^2}\right)$ includes higher-order statistical terms
\[
\frac{\E\bigl[(Y-\E[Y])(Z-\E[Z])^{p-1}\bigr]}{\E[Y](\E[Z]+\epsilon)^{p-1}}
\qquad\text{and}\qquad
\frac{\E\bigl[(Z-\E[Z])^p\bigr]}{(\E[Z]+\epsilon)^{p}},
\]
for $p\geq 3$.
\end{lemma}
\fi
Applying Lemma~\ref{lem:expectation_ratio_sqrt} with $Y=d_t$ and $Z=v_t+\epsilon$, we get the coordinate-wise inequality
\begin{align}
\left|\frac{\E_t[d_t]}{\sqrt{\E_t[v_t]\!+\!\epsilon}} - \E_t\!\left[\frac{d_t}{\sqrt{v_t\!+\!\epsilon}} \right]\right| 
\leq \left|\frac{\E_t[d_t]}{\sqrt{\E_t[v_t]\!+\!\epsilon}}\right|\odot\left|\frac{-\cov_t(d_t, v_t\!+\!\epsilon)}{2\E_t[d_t]\E_t[v_t\!+\!\epsilon]} 
+ \cO\!\left(\frac{\Var_t(v_t\!+\!\epsilon)}{\E_t[v_t\!+\!\epsilon]^2}\right) \right|.
\label{eqn:2nd-error-bound-interm}
\end{align}
In order to gain more intuition, we express the above bound using the conditional \emph{signal-to-noise ratio} (SNR) of $d_t$ and $v_t+\epsilon$, defined as
\[
\SNR_t(d_t)=\frac{\E_t[d_t]^2}{\Var_t(d_t)}, \qquad
\SNR_t(v_t+\epsilon)=\frac{\E_t[v_t+\epsilon]^2}{\Var_t(v_t+\epsilon)} =\frac{\E_t[v_t+\epsilon]^2}{\Var_t(v_t)}.
\]
With the definition of SNRs, the covariance term in~\eqref{eqn:2nd-error-bound-interm} can be expressed as
\begin{align*}
\frac{\cov_t(d_t, v_t+\epsilon)}{\E_t[d_t]\E_t[v_t+\epsilon]}
&=\frac{\cov_t(d_t, v_t)}{\E_t[d_t]\E_t[v_t+\epsilon]}\\
&=  \frac{\cov_t(d_t, v_t)}{\sqrt{\var_t(d_t)\var_t(v_t)}}\sqrt{\frac{\var_t(d_t)\var_t(v_t)}{\E_t[d_t]^2\E_t[v_t+\epsilon]^2}}\\
    &=\corr_t(d_t, v_t)\frac{1}{\sqrt{\SNR_t(d_t)\SNR_t(v_t+\epsilon)}},
\end{align*} 
where we use the definition of correlation of two random variables: $\corr(Y,Z)=\frac{\cov(Y,Z)}{\sqrt{\Var(Y)\Var(Z)}}$.
Substituting the expressions using SNRs into~\eqref{eqn:2nd-error-bound-interm}, we obtain
\begin{align*}
\left|\frac{\E_t[d_t]}{\sqrt{\E_t[v_t]\!+\!\epsilon}} - \E_t\!\left[\frac{d_t}{\sqrt{v_t\!+\!\epsilon}} \right]\right| 
  \leq \left|\frac{\E_t[d_t]}{\sqrt{\E_t[v_t]\!+\!\epsilon}}\right|\odot \left|\frac{-(1/2)\corr_t(d_t,v_t)}{\sqrt{\SNR_t(d_t)\SNR_t(v_t\!+\!\epsilon)}} 
+ \cO\!\left(\frac{1}{\SNR_t(v_t\!+\!\epsilon)} \right)\right|.
\end{align*}

\bigskip

We are ready to bound the difference between the expected updates of the practical BCOS and the conceptual BCOS methods. 
Specifically, bounding the first term on the right-hand side of~\eqref{eqn:difference-bound-in-two} using Lemma~\ref{lem:bound-frac-expect} and the second term using the last inequality above, we obtain
%\allowdisplaybreaks
\begin{align*}
\left|\frac{\E_t[d_t]}{\sqrt{\E_t[d_t^2]}} - \E_t\left[\frac{d_t}{\sqrt{v_t+\epsilon}}\right]\right| 
&\leq \frac{\tau+\cO(\tau^2)}{2}\left|\frac{\E_t[d_t]}{\sqrt{\E_t[d_t^2]}}\right| +\cO(\epsilon) \\
&\quad + \left|\frac{\E_t[d_t]}{\sqrt{\E_t[v_t]+\epsilon}}\right|\odot \left|\frac{-(1/2)\corr_t(d_t,v_t)}{\sqrt{\SNR_t(d_t)\SNR_t(v_t+\epsilon)}} 
+ \cO\!\left(\frac{1}{\SNR_t(v_t+\epsilon)} \right)\right|.
\end{align*}
We can further merge the two terms on the right-hand side by bounding $\left|\frac{\E_t[d_t]}{\sqrt{\E_t[v_t]+\epsilon}}\right|$ in terms of $\left|\frac{\E_t[d_t]}{\sqrt{\E_t[d_t^2]}}\right|$.
To this end, we use Lemma~\ref{lem:bound-frac-expect} to obtain
\[
\left|\frac{\E_t[d_t]}{\sqrt{\E_t[v_t]+\epsilon}}\right|
\leq \left(1+\frac{\tau+\cO(\tau^2)}{2}\right)
\left|\frac{\E_t[d_t]}{\sqrt{\E_t[d_t^2]}}\right| + \cO(\epsilon).
\]
Combining the two inequalities above, we arrive at the following result.

\begin{lemma}\label{lem:practicalBCOS_error_all}
Under Assumptions~\ref{assum:v_t-bias}, we have for all $t\geq 0$,
\begin{align*}
\left|\E_t\!\left[\frac{d_t}{\sqrt{v_t+\epsilon}}\right] - \frac{\E_t[d_t]}{\sqrt{\E_t[d_t^2]}}\right|
&\leq \sigma_t\left|\frac{\E_t[d_t]}{\sqrt{\E_t[d_t^2]}}\right| 
 + \cO(\epsilon),
\end{align*}
where 
% \todo{Need to change notation $c_t$ to $\theta_t$? to avoid clash with $c_t$ in Chung's lemmas and Dvoretzky's theorem.}
\begin{equation}\label{eqn:c_t-def}
\sigma_t=\frac{\tau \!+\! \cO(\tau^2)}{2} + \left(1+\frac{\tau \!+\! \cO(\tau^2)}{2}\right)\left\|\frac{-(1/2)\corr_t(d_t, v_t\!)}{\sqrt{\SNR_t(d_t)\SNR_t(v_t\!+\!\epsilon)}} + \cO\!\left(\frac{1}{\SNR_t(v_t\!+\!\epsilon)}\right) \right\|_\infty \!\!.
\end{equation}
\end{lemma}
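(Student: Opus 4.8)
The plan is to assemble the lemma directly from the triangle-inequality decomposition in~\eqref{eqn:difference-bound-in-two} together with the two component bounds already in hand, so that the genuine analytic work (the Taylor-type expansion of a random numerator divided by a random square root) lives entirely in Lemma~\ref{lem:bound-frac-expect} and Lemma~\ref{lem:expectation_ratio_sqrt}. What remains is mostly bookkeeping: collapsing coordinate-wise (Hadamard) vector inequalities into a single scalar prefactor $\sigma_t$ multiplying $\bigl|\E_t[d_t]/\sqrt{\E_t[d_t^2]}\bigr|$, while tracking the $\cO(\epsilon)$ and $\cO(\tau^2)$ remainders so they match the statement.

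First I would invoke the split in~\eqref{eqn:difference-bound-in-two}, which bounds the target quantity $\bigl|\E_t[d_t/\sqrt{v_t+\epsilon}]-\E_t[d_t]/\sqrt{\E_t[d_t^2]}\bigr|$ by the sum of a ``bias'' term and a ``fluctuation'' term. The bias term is controlled by Lemma~\ref{lem:bound-frac-expect}, contributing $\frac{\tau+\cO(\tau^2)}{2}\bigl|\E_t[d_t]/\sqrt{\E_t[d_t^2]}\bigr|+\cO(\epsilon)$. The fluctuation term is controlled by the coordinate-wise SNR inequality derived from Lemma~\ref{lem:expectation_ratio_sqrt} with $Y=d_t$ and $Z=v_t+\epsilon$; that bound carries the prefactor $\bigl|\E_t[d_t]/\sqrt{\E_t[v_t]+\epsilon}\bigr|$ multiplied, coordinate by coordinate, by the correlation/SNR vector appearing in~\eqref{eqn:c_t-def}.

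The key merging step is to re-express the fluctuation prefactor in terms of the conceptual quantity. Writing $A=\E_t[d_t]/\sqrt{\E_t[d_t^2]}$ and $B=\E_t[d_t]/\sqrt{\E_t[v_t]+\epsilon}$, the elementary bound $|B|\le|A|+|A-B|$ combined with Lemma~\ref{lem:bound-frac-expect} applied to $|A-B|$ supplies the multiplicative factor $1+\frac{\tau+\cO(\tau^2)}{2}$ plus an additive $\cO(\epsilon)$. I would then pass from the Hadamard product to a scalar multiple by bounding each coordinate of the correlation/SNR vector by its $\|\cdot\|_\infty$ norm; collecting the total coefficient of $|A|$ reproduces precisely the definition of $\sigma_t$ in~\eqref{eqn:c_t-def}, and the leftover additive pieces are gathered into a single $\cO(\epsilon)$.

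The main obstacle is exactly this last conversion from coordinate-wise inequalities to one scalar $\sigma_t$ acting on the whole vector: because the fluctuation bound is a true Hadamard product, extracting a common factor forces the $\|\cdot\|_\infty$ that appears in~\eqref{eqn:c_t-def}, and I must verify that the cross term—the product of the $\cO(\epsilon)$ from the prefactor replacement with the correlation/SNR vector—remains $\cO(\epsilon)$, which holds provided that vector is bounded (i.e.\ the SNRs are bounded away from zero so the higher-order remainders stay controlled). A secondary point of care is that composing Lemma~\ref{lem:bound-frac-expect} twice generates nested $\cO(\tau^2)$ contributions that must be absorbed consistently into the single $\cO(\tau^2)$ written inside $\sigma_t$.
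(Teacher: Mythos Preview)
Your proposal is correct and follows essentially the same route as the paper: split via~\eqref{eqn:difference-bound-in-two}, bound the bias piece by Lemma~\ref{lem:bound-frac-expect}, bound the fluctuation piece by Lemma~\ref{lem:expectation_ratio_sqrt} rewritten in SNR form, then convert the prefactor $\bigl|\E_t[d_t]/\sqrt{\E_t[v_t]+\epsilon}\bigr|$ back to $\bigl|\E_t[d_t]/\sqrt{\E_t[d_t^2]}\bigr|$ via a second application of Lemma~\ref{lem:bound-frac-expect} and pass to the $\ell_\infty$ norm to extract the scalar~$\sigma_t$. Your explicit flagging of the cross-term (the $\cO(\epsilon)$ from the prefactor replacement multiplied by the correlation/SNR vector) is a point the paper glosses over but handles implicitly in the same way you suggest.
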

% \todo{$\omega$ is used in Dvoretzky's theorem. Ops, then $\sigma_t$ looks good.}

\subsubsection{Almost sure convergence of practical BCOS}

%We are ready to prove almost sure convergence of the practical BCOS method.
The following lemma is a variant of the aiming condition with the practical BCOS update.

\begin{lemma}\label{lem:practical-aiming}
Under Assumptions~\ref{assum:v_t-bias}, it holds that
\begin{align}
   \left \langle x_t-x_\ast, \E_t \!\left[\frac{d_t}{\sqrt{v_t + \epsilon}} \right]\right \rangle \geq 
   &\left \langle x_t-x_\ast, \frac{\E_t[d_t]}{\sqrt{\E_t[d_t^2]}}\right \rangle - \norm{x_t-x_\ast}_1 \bigl( \sigma_t + \cO(\epsilon)\bigr),
\label{eqn:practical-aiming}
\end{align}
where $\sigma_t$ is given by~\eqref{eqn:c_t-def}.
\end{lemma}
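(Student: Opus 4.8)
The plan is to reduce everything to the coordinate-wise estimate already established in Lemma~\ref{lem:practicalBCOS_error_all}. First I would split the left-hand side of~\eqref{eqn:practical-aiming} by adding and subtracting the conceptual direction:
\[
\left\langle x_t-x_\ast,\, \E_t\!\left[\frac{d_t}{\sqrt{v_t+\epsilon}}\right]\right\rangle
= \left\langle x_t-x_\ast,\, \frac{\E_t[d_t]}{\sqrt{\E_t[d_t^2]}}\right\rangle
+ \left\langle x_t-x_\ast,\, w_t\right\rangle,
\]
where $w_t := \E_t[d_t/\sqrt{v_t+\epsilon}] - \E_t[d_t]/\sqrt{\E_t[d_t^2]}$ is precisely the vector whose coordinate-wise magnitude is controlled by Lemma~\ref{lem:practicalBCOS_error_all}. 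It then suffices to lower-bound the error inner product $\langle x_t-x_\ast, w_t\rangle$, since the first term on the right is exactly the conceptual aiming quantity appearing in~\eqref{eqn:practical-aiming}.

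For the second step I would bound the error term from below by its worst case through a coordinate-wise (Hölder-type) inequality,
\[
\langle x_t-x_\ast,\, w_t\rangle \;\geq\; -\sum_{k=1}^n |x_{t,k}-x_{\ast,k}|\,|w_{t,k}|,
\]
and then apply Lemma~\ref{lem:practicalBCOS_error_all} coordinate-wise, which gives $|w_{t,k}| \leq \sigma_t\,\bigl|\E_t[d_{t,k}]/\sqrt{\E_t[d_{t,k}^2]}\bigr| + \cO(\epsilon)$. Using the identity $\bigl|\E_t[d_{t,k}]/\sqrt{\E_t[d_{t,k}^2]}\bigr| = \sqrt{\rho_{t,k}}$ from the definition of the signal fraction in~\eqref{eqn:SiF-block}, this produces
\[
\sum_{k=1}^n |x_{t,k}-x_{\ast,k}|\,|w_{t,k}|
\leq \sigma_t \sum_{k=1}^n |x_{t,k}-x_{\ast,k}|\,\sqrt{\rho_{t,k}}
+ \cO(\epsilon)\sum_{k=1}^n |x_{t,k}-x_{\ast,k}|.
\]

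The decisive simplification is to invoke $\sqrt{\rho_{t,k}}\in[0,1]$, so that each summand satisfies $|x_{t,k}-x_{\ast,k}|\sqrt{\rho_{t,k}} \leq |x_{t,k}-x_{\ast,k}|$; both sums then collapse to $\norm{x_t-x_\ast}_1$, yielding $\langle x_t-x_\ast, w_t\rangle \geq -\norm{x_t-x_\ast}_1(\sigma_t+\cO(\epsilon))$. Combining this with the decomposition of the first step immediately gives~\eqref{eqn:practical-aiming}. I do not expect a serious obstacle here: the genuine technical work lives in the preceding lemmas (especially Lemmas~\ref{lem:bound-frac-expect} and~\ref{lem:expectation_ratio_sqrt}, which feed into Lemma~\ref{lem:practicalBCOS_error_all}), and what remains is the routine passage from a coordinate-wise magnitude bound to an inner-product bound. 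The only point demanding a little care is the bookkeeping of the additive $\cO(\epsilon)$ term: it must be kept coordinate-wise and summed against $|x_{t,k}-x_{\ast,k}|$ so as to yield the clean $\norm{x_t-x_\ast}_1\,\cO(\epsilon)$ contribution rather than an extraneous dimension-dependent factor.
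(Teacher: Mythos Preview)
Your proposal is correct and follows essentially the same route as the paper: the paper also adds and subtracts the conceptual direction $\E_t[d_t]/\sqrt{\E_t[d_t^2]}$, then bounds the error inner product via H\"older's inequality and Lemma~\ref{lem:practicalBCOS_error_all}, and finally uses $\|\sqrt{\rho_t}\|_\infty\leq 1$ to absorb the signal-fraction factor. The only cosmetic difference is that the paper applies the $\ell_1$--$\ell_\infty$ H\"older bound in one shot rather than writing the coordinate sum explicitly, but the ingredients and the resulting estimate are identical.
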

   
\begin{proof}
   Adding and subtracting the term $\frac{\E_t[d_t]}{\sqrt{\E_t[d_t^2]}}$ from the inner product, we obtain:
\allowdisplaybreaks
\begin{align*}
    &\left \langle x_t-x_\ast, \E_t \!\left[\frac{d_t}{\sqrt{v_t + \epsilon}} \right]\right \rangle \nonumber\\
    =~& \left \langle x_t-x_\ast, \frac{\E_t[d_t]}{\sqrt{\E_t[d_t^2]}} \right \rangle + \left \langle x_t-x_\ast, \E_t \!\left[\frac{d_t}{\sqrt{v_t + \epsilon}} \right] -  \frac{\E_t[d_t]}{\sqrt{\E_t[d_t^2]}}\right \rangle \nonumber\\
    \geq~& \left \langle x_t-x_\ast,  \frac{\E_t[d_t]}{\sqrt{\E_t[d_t^2]}}\right \rangle - \norm{x_t-x_\ast}_1\cdot \norm{\E_t\!\left[\frac{d_t}{\sqrt{v_t + \epsilon}}\right] -  \frac{\E_t[d_t]}{\sqrt{\E_t[d_t^2]}}}_\infty \nonumber\\
    \geq~& \left \langle x_t-x_\ast, \frac{\E_t[d_t]}{\sqrt{\E_t[d_t^2]}}\right \rangle - \norm{x_t-x_\ast}_1 \left(\sigma_t \norm{\frac{\E_t[d_t]}{\sqrt{\E_t[d_t^2]}}}_\infty + \cO(\epsilon)\right),
%    + \cO\!\left(\epsilon,\, \frac{1}{\SNR_t(d_t)},\, \frac{1}{\SNR_t(v_t+\epsilon)}\right)\right),
%\label{lem:eq:inner_prod_lbd1}
\end{align*}
where the first inequality is due to H\"older's inequality 
and the last inequality is due to Lemma~\ref{lem:practicalBCOS_error_all}.
% \todo{We use H\"older's inequality here. With Cauchy-Schwarz 2-norm, need to add factor $\sqrt{n}$ to error terms $\sigma_t$ and $\cO(\epsilon)$.}
In addition, we notice that 
\begin{equation}\label{eqn:rho_t-inf-norm-bounded}
\biggl\|\frac{\E_t[d_t]}{\sqrt{\E_t[d_t^2]}}\biggr\|_\infty
=\left\|\sqrt{\rho_t}\right\|_\infty\leq 1,
\end{equation}
which concludes the proof.
\end{proof}

\begin{theorem}[Almost sure convergence of practical BCOS]
\label{thm:bcosw-almost-sure}
Consider the practical BCOS method~\eqref{eqn:bcosw-practical}.
Suppose that Assumptions~\ref{assum:aiming} and~\ref{assum:v_t-bias} hold, $d_t$ is bounded almost surely, and $\{\alpha_t\}$ satisfies~\eqref{eqn:alpha_t-bcosw}. 
Then there exists $\delta>0$ such that we have with probability one,
\begin{equation}\label{eqn:limsup-delta}
\limsup_{t\to\infty} \|x_t-x_*\|^2 \leq \delta^2.
\end{equation}
More specifically, $\delta$ is the smallest constant such that $\|x-x_*\|\geq \delta$ implies
% \todo{Here intentionally use $\|x-x_*\|$, not $\|x_t-x_*\|$!}
\begin{equation}\label{eqn:neighberhood-cond}
2\|x-x_*\|_1\bigl(\sigma_t+\cO(\epsilon)\bigr) \leq \lambda\|x-x_*\|^2, \qquad \forall\, t\geq 0,
\end{equation}
where $\sigma_t$ is given by~\eqref{eqn:c_t-def}.
%
%More specifically, let $\delta_t$ be the smallest constant such that $\|x_t-x_*\|\geq \delta_t$ implies
%\begin{equation}\label{eqn:neighberhood-cond}
%2\|x_t-x_*\|_1\bigl(c_t+\cO(\epsilon)\bigr) \leq \lambda\|x_t-x_*\|^2, \qquad \forall\, t\geq 0,
%\end{equatiok}
%where $c_t$ is given by~\eqref{eqn:c_t-def}.
%Then we have $\delta=\limsup_{t\to\infty}\delta_t$.
\end{theorem}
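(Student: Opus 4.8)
The plan is to mirror the one-step analysis of Lemma~\ref{lem:bcosw-one-step} but to replace the exact aiming inequality by its practical counterpart from Lemma~\ref{lem:practical-aiming}, and then to push the resulting recursion through a ``convergence-to-a-neighborhood'' argument. First I would expand $\|x_{t+1}-x_*\|^2$ exactly as in the proof of Lemma~\ref{lem:bcosw-one-step}, writing $x_{t+1}-x_*=(1-\alpha_t\lambda)(x_t-x_*)-\alpha_t\bigl(\tfrac{d_t}{\sqrt{v_t+\epsilon}}+\lambda x_*\bigr)$ and taking $\E_t$. The only new ingredient is the cross term $\langle x_t-x_*,\,\E_t[\tfrac{d_t}{\sqrt{v_t+\epsilon}}]+\lambda x_*\rangle$: adding $\lambda\langle x_t-x_*,x_*\rangle$ to the bound of Lemma~\ref{lem:practical-aiming} and then invoking the equivalent aiming form~\eqref{eqn:bcosw-aiming-x*} lower-bounds it by $-\|x_t-x_*\|_1(\sigma_t+\cO(\epsilon))$. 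The quadratic remainder $\alpha_t^2\,\E_t[\|\tfrac{d_t}{\sqrt{v_t+\epsilon}}+\lambda x_*\|^2]$ is bounded by $\alpha_t^2 B'$ for a finite constant $B'$, using that $d_t$ is bounded a.s. (say $\|d_t\|_\infty\le D$) and that the $\epsilon$ floor forces $|d_{t,k}|/\sqrt{v_{t,k}+\epsilon}\le D/\sqrt{\epsilon}$. This produces the master recursion
\[
\E_t\bigl[\|x_{t+1}-x_*\|^2\bigr]\le(1-\alpha_t\lambda)^2\|x_t-x_*\|^2+2\alpha_t\|x_t-x_*\|_1(\sigma_t+\cO(\epsilon))+\alpha_t^2 B'.
\]

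Next I would record that the iterates stay bounded. Taking norms in the update gives $\|x_{t+1}-x_*\|\le(1-\alpha_t\lambda)\|x_t-x_*\|+\alpha_t\bigl(D\sqrt{n}/\sqrt{\epsilon}+\lambda\|x_*\|\bigr)$, a contraction-plus-bounded-drift recursion in which $1-\alpha_t\lambda\ge0$ is used, so $\sup_t\|x_t-x_*\|\le B_0$ almost surely for an explicit deterministic $B_0$. Boundedness renders the otherwise-dangerous term $\alpha_t^2\lambda^2\|x_t-x_*\|^2$ (from expanding the square) summable and lets me fold it into the constant, so I may treat $\tilde X_t:=\|x_t-x_*\|^2$ as a bounded, nonnegative, adapted process obeying the master recursion.

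The definition of $\delta$ in~\eqref{eqn:neighberhood-cond} is tailored precisely so that on the event $\{\tilde X_t\ge\delta^2\}$ the middle term is absorbed: $2\|x_t-x_*\|_1(\sigma_t+\cO(\epsilon))\le\lambda\tilde X_t$, whence $\E_t[\tilde X_{t+1}]\le\tilde X_t-\alpha_t\lambda\tilde X_t+\alpha_t^2\tilde B$ for a constant $\tilde B$. Thus outside the ball of radius $\delta$ the process is an almost-supermartingale with a strict downward drift of order $\alpha_t\lambda\tilde X_t\ge\alpha_t\lambda\delta^2$ and summable $\alpha_t^2$ noise, whereas inside the ball a single step can increase $\tilde X_t$ by at most $\cO(\alpha_t)\to0$.

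The main obstacle is the final passage from this ``supermartingale outside the ball'' estimate to $\limsup_t\tilde X_t\le\delta^2$ a.s.; note that Lemma~\ref{lem:almost_supermartingale} cannot be applied globally, since inside the ball the error injects a perturbation of order $\alpha_t$ (not $\alpha_t^2$), whose sum diverges because $\sum_t\alpha_t=\infty$ --- this is exactly why the iterates converge only to a neighborhood rather than to $x_*$. I would handle it by a standard excursion argument: fix $\eta>0$ and analyze up-crossings of the interval $[\delta^2+\eta,\,\delta^2+2\eta]$. First, $\liminf_t\tilde X_t\le\delta^2$ a.s., since otherwise $\tilde X_t$ would eventually be an almost-supermartingale decreasing by at least $\alpha_t\lambda\delta^2$ per step with $\sum_t\alpha_t=\infty$, contradicting nonnegativity. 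Then each excursion above $\delta^2+\eta$ starts from a value at most $\delta^2+\eta+\cO(\alpha_t)$ (the entering jump vanishes), and while the excursion is in progress $\tilde X_t$ is an almost-supermartingale with summable noise, so applying Lemma~\ref{lem:almost_supermartingale} (or the martingale convergence theorem) to the stopped process shows the total upward fluctuation is finite and only finitely many excursions reach level $\delta^2+2\eta$. Hence $\limsup_t\tilde X_t\le\delta^2+2\eta$ a.s., and letting $\eta\downarrow0$ along a countable sequence yields~\eqref{eqn:limsup-delta}. The delicate points are making the stopping-time bookkeeping rigorous and verifying that the within-ball overshoot is genuinely $o(1)$, which is where the boundedness of $d_t$ and $\alpha_t\to0$ are essential.
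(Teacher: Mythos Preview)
Your derivation of the master recursion
\[
\E_t\bigl[\|x_{t+1}-x_*\|^2\bigr]\le(1-\alpha_t\lambda)^2\|x_t-x_*\|^2+2\alpha_t\|x_t-x_*\|_1(\sigma_t+\cO(\epsilon))+\alpha_t^2 B'
\]
is exactly what the paper obtains, via the same combination of Lemma~\ref{lem:practical-aiming} and the reformulated aiming condition~\eqref{eqn:bcosw-aiming-x*}. The two approaches diverge only at the last step. The paper does not run an excursion argument at all: it decomposes $x_{t+1}=T_t(x_0,\ldots,x_t)+y_t$ with $T_t$ the $\E_t$-conditional mean of the update and $y_t$ the martingale difference, verifies the two-case bound
\[
\|T_t-x_*\|^2\le\max\bigl\{a_t,\,(1+b_t)\|x_t-x_*\|^2-c_t+h_t\bigr\}
\]
(where $a_t\to\delta^2$ captures the ``inside-the-ball'' case), and then invokes an extension of Dvoretzky's theorem (Theorem~\ref{thm:Dvoretzky}) as a black box to conclude $\limsup_t\|x_t-x_*\|^2\le a_\infty=\delta^2$ almost surely. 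Your route---deterministic boundedness of the iterates, $\liminf\le\delta^2$ from the unsummable drift, then an up-crossing count above $\delta^2+\eta$---is essentially a hands-on proof of the special case of Dvoretzky's theorem that is needed here (compare the proofs in \cite{DermanSacks1959,Venter1966}). The paper's route is shorter and offloads precisely the delicate stopping-time bookkeeping you flag onto a classical result; your route is more self-contained and makes the mechanism (supermartingale outside the ball, $o(1)$ overshoot when re-entering) explicit, at the cost of having to make the ``only finitely many excursions reach $\delta^2+2\eta$'' step fully rigorous---which is in fact the nontrivial core of Dvoretzky's proof and is not a direct consequence of Lemma~\ref{lem:almost_supermartingale} as stated.
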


The proof of Theorem~\ref{thm:bcosw-almost-sure} is based on a classical result of stochastic approximation due to \citet{Dvoretzky1956} and its extension by \citet{DermanSacks1959}. Here we present a version that best fits our purpose.

\begin{theorem}[An extension of Dvoretzky's Theorem]
\label{thm:Dvoretzky}
Let $(\Omega=\{\omega\},\cF,P)$ be a probability space. 
Let $\{x_t\}$ and $\{y_t\}$ be sequences of random variables such that, for all $t\geq 0$,
% \todo{$\omega$ is used in Dvoretzky's theorem.}
\begin{equation}\label{eqn:Dvoretzky-alg}
x_{t+1}(\omega) = T_t\bigl(x_0(\omega),\ldots,x_t(\omega)\bigr) + y_t(\omega),
\end{equation}
where the transformation $T_t$ satisfy, for any $x_0,\ldots,x_t\in\R^n$,
\begin{equation}\label{eqn:Dvoretzky-ineq}
\bigl\|T_t(x_0,\ldots,x_t)-x_*\|^2 \leq \max\bigl\{a_t,\,(1+b_t)\|x_t-x_*\|^2 - c_t + h_t\bigr\} ,
\end{equation}
and the sequences $\{a_t\}$, $\{b_t\}$, $\{c_t\}$ and $\{h_t\}$ are non-negative and satisfy
\begin{align}
\lim_{t\to\infty}a_t = a_\infty, \qquad
%\limsup_{t\to\infty}a_t \leq a_\infty, \qquad
\sum_{t=0}^\infty b_t < \infty, \qquad
\sum_{t=0}^\infty c_t = \infty, \qquad
\sum_{t=1}^\infty h_t < \infty.
\label{eqn:Dvoretzky-conditions}
\end{align}
% \todo{Need to change notation $h_t$ due to clash with search direction. Maybe use $h_t$ here?}
In addition, suppose the following conditions hold with probability one:
\begin{equation}\label{eqn:yt-squared-summable}
\E[\|x_0\|^2]<\infty,\qquad 
\sum_{t=0}^\infty \E[\|y_t\|^2]<\infty, \qquad
\E\bigl[y_t|x_0,\ldots,x_t\bigr] = 0 \quad\forall\,t\geq 0.
\end{equation}
Then we have with probability one,
\[
\limsup_{t\to\infty} \|x_t-x_*\|^2 \leq a_\infty.
\]
\end{theorem}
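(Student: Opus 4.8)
The plan is to run the recursion through the Robbins--Siegmund lemma (Lemma~\ref{lem:almost_supermartingale}) wherever it genuinely behaves like a supermartingale, and to confine the one feature that is \emph{not} supermartingale-like---the persistent floor $a_t\to a_\infty$---to a truncation/excursion argument. Write $X_t:=\norm{x_t-x_*}^2$ and $\cF_t:=\sigma(x_0,\ldots,x_t)$. First I would take the one-step conditional expectation of $X_{t+1}$. Using \eqref{eqn:Dvoretzky-alg} and expanding, $X_{t+1}=\norm{T_t-x_*}^2+2\langle T_t-x_*,\,y_t\rangle+\norm{y_t}^2$; since $T_t=T_t(x_0,\ldots,x_t)$ is $\cF_t$-measurable and $\E[y_t\mid\cF_t]=0$ by \eqref{eqn:yt-squared-summable}, the cross term drops out in conditional expectation, so with $e_t:=\E[\norm{y_t}^2\mid\cF_t]$ the pathwise bound \eqref{eqn:Dvoretzky-ineq} yields
\[
\E[X_{t+1}\mid\cF_t]=\norm{T_t-x_*}^2+e_t\leq \max\bigl\{a_t,\,(1+b_t)X_t-c_t+h_t\bigr\}+e_t .
\]
Because $\sum_t\E[e_t]=\sum_t\E[\norm{y_t}^2]<\infty$ by \eqref{eqn:yt-squared-summable}, Tonelli gives $\sum_t e_t<\infty$ almost surely, so the noise enters only as an a.s.-summable nonnegative perturbation.

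Second, I would remove the multiplicative factor $1+b_t$. Since $\sum_t b_t<\infty$, the partial products $\pi_t:=\prod_{s=0}^{t-1}(1+b_s)$ increase to a finite limit $\pi_\infty\in[1,\infty)$. Rescaling $V_t:=X_t/\pi_t\geq 0$ and dividing the displayed recursion by $\pi_{t+1}=(1+b_t)\pi_t$ (the $\max$ commutes with division by a positive scalar, and $(1+b_t)X_t/\pi_{t+1}=V_t$) gives $\E[V_{t+1}\mid\cF_t]\leq \max\{\hat a_t,\,V_t-\hat c_t+\hat h_t\}+\hat e_t$, where $\hat a_t=a_t/\pi_{t+1}\to a_\infty/\pi_\infty$ and, using $1\leq\pi_t\leq\pi_\infty$, one still has $\sum_t\hat c_t=\infty$, $\sum_t\hat h_t<\infty$, and $\sum_t\E[\hat e_t]<\infty$. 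As $\pi_t\to\pi_\infty>0$, $\limsup_t X_t=\pi_\infty\limsup_t V_t$, so it suffices to show $\limsup_t V_t\leq a_\infty/\pi_\infty$; renaming, I may assume $b_t\equiv 0$ and that the floor tends to $a_\infty$.

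Third, fix $\epsilon>0$, set $\ell:=a_\infty+\epsilon$, and aim to prove $\limsup_t V_t\leq\ell$ (then intersect over a sequence $\epsilon\downarrow 0$). Pick $T_0$ with $a_t\leq a_\infty+\epsilon/2$ for $t\geq T_0$. A short check of the two branches of the $\max$ shows that on $\{V_t\geq\ell\}$ one has, for $t\geq T_0$, $\E[V_{t+1}\mid\cF_t]\leq V_t+(h_t+e_t)-c_t'$ with $c_t':=\min\{c_t,\epsilon/2\}$, and crucially $\sum_t c_t'=\infty$. I would first establish recurrence below $\ell$: for $N\geq T_0$ let $\tau:=\inf\{t\geq N:V_t<\ell\}$ and apply Lemma~\ref{lem:almost_supermartingale} to the stopped process $V_{t\wedge\tau}$, which obeys $\E[V_{(t+1)\wedge\tau}\mid\cF_t]\leq V_{t\wedge\tau}+(h_t+e_t)\mathbf 1\{t<\tau\}-c_t'\mathbf 1\{t<\tau\}$. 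The lemma forces $\sum_t c_t'\mathbf 1\{t<\tau\}<\infty$ a.s., which is incompatible with $\sum_{t\geq N}c_t'=\infty$ unless $\tau<\infty$ a.s. Since $N$ was arbitrary, $V_t<\ell$ infinitely often, i.e.\ $\liminf_t V_t\leq\ell$.

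Finally, I would upgrade $\liminf_t V_t\leq\ell$ to $\limsup_t V_t\leq\ell$ by controlling the upward excursions above $\ell$. Writing $R_t:=\sum_{s\geq t}(h_s+e_s)\downarrow 0$ a.s., the computation above shows that $G_t:=V_t+R_t\geq 0$ is a supermartingale while the path stays in $\{V\geq\ell\}$ (each step loses at least $c_t'$) and is reset to a value near $\ell$ whenever the path dips below $\ell$; hence the height of an excursion begun at a time $N\geq T_0$ can only come from the martingale fluctuations $V_{t+1}-\E[V_{t+1}\mid\cF_t]=\pi_{t+1}^{-1}(2\langle T_t-x_*,y_t\rangle+\norm{y_t}^2-e_t)$, whose conditional second moments are controlled by $X_t$ times the summable noise terms and whose tail past $N$ is small since $\sum_{s\geq N}e_s\to 0$. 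This last step is the main obstacle: the nonvanishing floor means neither $X_t$ nor $V_t$ converges and there is no global supermartingale, while the natural truncation $(V_t-\ell)^+$ cannot be bounded in conditional expectation through Jensen (convexity points the wrong way), so the overshoots must be controlled directly via the martingale decomposition. Because the conditional variance bound involves the not-yet-bounded $X_t$, making the noise martingale square-integrable requires a stopping-time localization on $\{X_t\leq K\}$ followed by letting $K\to\infty$; this is precisely the delicate core of Dvoretzky's theorem in the martingale form of Derman and Sacks~\cite{DermanSacks1959}. (In the application, Theorem~\ref{thm:bcosw-almost-sure}, $d_t$ is bounded a.s., hence so is $y_t$, which bounds these fluctuations uniformly and streamlines the overshoot control.)
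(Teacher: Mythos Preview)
The paper does not give a self-contained proof of this theorem. The Remark immediately following the statement explains that it is a minor variation of Venter's Theorem~1, with the non-constant $\{a_t\}$ handled as in Dvoretzky and Derman--Sacks and the summable $\{h_t\}$ absorbed by a straightforward argument; the subsequent proof environment in the paper (labeled ``Proof of Theorem~\ref{thm:Dvoretzky}'') in fact proves Theorem~\ref{thm:bcosw-almost-sure} by \emph{invoking} this result as a black box---note its final line ``we can invoke Theorem~\ref{thm:Dvoretzky} to finish the proof.'' So there is no in-paper argument to compare against beyond those literature pointers.

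Your attempt is therefore more ambitious than what the paper offers, and your first three steps are correct and cleanly executed: the conditional-expectation expansion that kills the cross term via $\E[y_t\mid\cF_t]=0$, the rescaling by $\pi_t=\prod_{s<t}(1+b_s)$ to eliminate the multiplicative $(1+b_t)$, and the Robbins--Siegmund argument on the stopped process to force $\tau<\infty$ a.s.\ and hence $\liminf_t V_t\le\ell$ all go through (your two-case check of the $\max$ branches against $c_t':=\min\{c_t,\epsilon/2\}$ is exactly what is needed). The gap is step~4, and you are candid about it: you sketch why the overshoot control should work but do not carry it out. The observation that $G_t=V_t+R_t$ is a supermartingale along each excursion above $\ell$ is correct and is the right tool, but upgrading $\liminf\le\ell$ to $\limsup\le\ell$ requires bounding the \emph{entry value} $V_{\sigma_k}$ at the start of each late excursion, and this is precisely where the single-step noise $2\langle T_t-x_*,y_t\rangle$ enters with a coefficient of order $\sqrt{X_t}$---the quantity you have not yet shown is bounded. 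Your proposed fix (localize on $\{X_t\le K\}$, then let $K\to\infty$) is the standard route, but it is not written out. So the skeleton is right and aligns with the Derman--Sacks treatment the paper defers to; what remains is to execute that last localization rather than gesture at it.
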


\paragraph{Remark.} \label{Dvoretzky-remarks}
There are many extensions of the original result by \citet{Dvoretzky1956}. 
Theorem~\ref{thm:Dvoretzky} is a minor variation of \citet[][Theorem~1]{Venter1966}. More concretely,
\begin{itemize}
\item Theorem~1 of \citet{Venter1966} has the sequence $\{a_t\}$ being a constant sequence, i.e., $a_t=a_\infty$ for all $t\geq 0$. The extension to a non-constant sequence $\{a_t\}$ is outlined in the original work of \citet{Dvoretzky1956} and admits a simple proof due to \citet{DermanSacks1959}.
\item Theorem~1 of \citet{Venter1966} does not include the sequence $\{h_t\}$. The extension with $\sum_{t=0}^\infty h_t<\infty$ is straightforward based on a simple argument of \citet{Dvoretzky1956}.
\item More generally, the sequences $\{a_t\}$, $\{b_t\}$, $\{c_t\}$, $\{h_t\}$ can be non-negative measurable functions of $x_0,\ldots,x_t$, and the conclusion of Theorem~\ref{thm:Dvoretzky} holds if $a_\infty$ is an upper bound on $\limsup_{t\to\infty}a_t(x_0,\ldots,x_t)$ uniformly for all sequences $\{x_t\}$ \citep{DermanSacks1959,ROBBINS1971}.
\end{itemize}

\smallskip

\begin{proof}[Proof of Theorem~\ref{thm:Dvoretzky}]
We first write the practical BCOS method in the form of~\eqref{eqn:Dvoretzky-alg}:
\begin{align*}
x_{t+1} 
&= (1-\alpha_t\lambda) x_t - \alpha_t\frac{d_t}{\sqrt{v_t+\epsilon}} \\
&=(1-\alpha_t\lambda)x_t - \alpha_t\E_t\!\left[\frac{d_t}{\sqrt{v_t+\epsilon}}\right] + \alpha_t\left(\E_t\!\left[\frac{d_t}{\sqrt{v_t+\epsilon}}\right]-\frac{d_t}{\sqrt{v_t+\epsilon}}\right).
\end{align*}
Thus we have $x_{t+1}=T_t(x_0,\ldots,x_t)+y_t$ with
\begin{align*}
T_t(x_0,\ldots,x_t)
&=(1-\alpha_t\lambda)x_t - \alpha_t\E_t\!\left[\frac{d_t}{\sqrt{v_t+\epsilon}}\right], \\
y_t&= \alpha_t\left(\E_t\!\left[\frac{d_t}{\sqrt{v_t+\epsilon}}\right]-\frac{d_t}{\sqrt{v_t+\epsilon}}\right).
\end{align*}
Apparently $\E_t[y_t]=\E[y_t|x_0,\ldots,x_t]=0$.
We also have $\sum_{t=0}^\infty\E[\|y_t\|^2]<\infty$ due to the assumptions that $\sum_{t=0}^\infty\alpha_t^2<\infty$ and~$d_t$ is bounded almost surely.
Therefore the conditions in~\eqref{eqn:yt-squared-summable} are all satisfied.
%\todo{Can we show $\E[\|y_t\|^2]$ bounded based on Lemma~\ref{lem:practicalBCOS_error_all}? Then no need to assume $d_t$ bounded.}

The squared distance between $T_t(x_0,\ldots,x_t)$ and $x_*$ admits the following expansion:
\begin{align*}
\bigl\|T_t(x_0,\ldots,x_t)-x_*\bigr\|^2
&=\left\|(1-\alpha_t\lambda)x_t - \alpha_t\E_t\!\left[\frac{d_t}{\sqrt{v_t+\epsilon}}\right] - x_*\right\|^2 \\
&=\left\|(1-\alpha_t\lambda)(x_t-x_*) - \alpha_t\left(\E_t\!\left[\frac{d_t}{\sqrt{v_t+\epsilon}}\right] + \lambda x_*\right)\right\|^2 \\
&=(1-\alpha_t\lambda)^2\|x_t-x_*\|^2 - 2\alpha_t(1-\alpha_t\lambda)\left\langle x_t-x_*,\,\E_t\!\left[\frac{d_t}{\sqrt{v_t+\epsilon}}\right]+\lambda x_*\right\rangle \\
&\quad + \alpha_t^2\left\|\E_t\!\left[\frac{d_t}{\sqrt{v_t+\epsilon}}\right]+\lambda x_*\right\|^2.
\end{align*}
Using Lemma~\ref{lem:practical-aiming} and then the aiming condition~\eqref{eqn:bcosw-aiming-x*}, we obtain
\begin{align*}
\left \langle x_t-x_\ast,\, \E_t\!\left[\frac{d_t}{\sqrt{v_t + \epsilon}}\right]\!+\!\lambda x_*\right \rangle 
&\geq \left \langle x_t-x_\ast,\,\frac{\E_t[d_t]}{\sqrt{\E_t[d_t^2]}}+\!\lambda x_*\right \rangle  - \|x_t-x_*\|_1\left(\sigma_t + \cO(\epsilon)\right) \\
&\geq - \|x_t-x_*\|_1\left(\sigma_t + \cO(\epsilon)\right) .
\end{align*}
By the assumption that $d_t$ is bounded almost surely, there exists a constant $B$ such that 
$$\left\|\E_t\!\left[\frac{d_t}{\sqrt{v_t+\epsilon}}\right]+\lambda x_*\right\|^2\leq B, \qquad \forall\, t\geq 0. $$
Together with $0<1-\alpha_t\lambda<1$, we conclude that
\begin{align}
\bigl\|T_t(x_0,\ldots,x_t)-x_*\bigr\|^2
&\leq (1-\alpha_t\lambda)^2\norm{x_{t}-x_\ast}^2 + 2\alpha_t(1-\alpha_t\lambda)\|x_t-x_*\|_1(\sigma_t+\cO(\epsilon)) + \alpha_t^2 B \nonumber\\
&\leq (1-\alpha_t\lambda)^2\norm{x_{t}-x_\ast}^2 + 2\alpha_t\|x_t-x_*\|_1(\sigma_t+\cO(\epsilon)) + \alpha_t^2 B \nonumber\\
&= (1+\alpha_t^2\lambda^2)\norm{x_{t}-x_\ast}^2 - 2\alpha_t\lambda \|x_t-x_*\|^2 + 2\alpha_t\|x_t-x_*\|_1(\sigma_t+\cO(\epsilon)) \nonumber\\
&\quad~ + \alpha_t^2 B  \nonumber\\
&= (1+\alpha_t^2\lambda^2)\norm{x_{t}-x_\ast}^2 + \alpha_t\left(2\|x_t-x_*\|_1(\sigma_t+\cO(\epsilon))-\lambda\|x_t-x_*\|^2\right)  \nonumber\\
&\quad~ - \alpha_t\lambda \|x_t-x_*\|^2 + \alpha_t^2 B. 
\label{eqn:T-upper-bound}
\end{align}

We observe that there exist $\delta>0$ such that for all $t\geq 0$,
\[
\|x_t-x_*\| \geq \delta \quad \Longrightarrow \quad
2\|x_t-x_*\|_1(\sigma_t+\cO(\epsilon))-\lambda\|x_t-x_*\|^2 \leq 0.
\]
Therefore, when $\|x_t-x_*\|\geq \delta$, we deduce from~\eqref{eqn:T-upper-bound} that 
\[
\norm{T_t(x_0,\ldots,x_t)-x_\ast}^2 
\leq (1+\alpha_t^2\lambda^2)\|x_t-x_*\|^2 - \alpha_t\lambda\|x_t-x_*\|^2 + \alpha_t^2 B.
\]
Otherwise, when $\|x_t-x_*\|\leq\delta$, we have 
\[
\norm{T_t(x_0,\ldots,x_t)-x_\ast}^2 
\leq (1-\alpha_t\lambda)^2\delta^2 + 2\alpha_t (1-\alpha_t\lambda)\sqrt{n}\delta(\sigma_t + \cO(\epsilon)) + \alpha_t^2 B,
\]
where we used $\|x_t-x_*\|_1\leq\sqrt{n}\|x_t-x_*\| \leq \sqrt{n}\delta$.
%Define $a_t$ as the right-hand side of the above inequality, i.e., 
With the definition 
\begin{equation}\label{eqn:a_t-definition}
a_t = (1-\alpha_t\lambda)^2\delta^2 + 2\alpha_t (1-\alpha_t\lambda)\sqrt{n}\delta(\sigma_t + \cO(\epsilon)) + \alpha_t^2 B,
%a_t = (1-\alpha_t\lambda)^2\delta^2 + 2\alpha_t \bigl(c \|\sqrt{\rho_t}\|\delta + \cO(\epsilon)\bigr) + \alpha_t^2 B.
\end{equation}
we can combine the above two cases as
\[
\norm{T_t(x_0,\ldots,x_t)-x_\ast}^2 
\leq \max\bigl\{a_t, \, (1+\alpha_t^2\lambda^2)\|x_t-x_*\|^2 - \alpha_t\lambda\|x_t-x_*\|^2 + \alpha_t^2 B\bigr\}.
\]
With the additional definitions of
\[
b_t=\alpha_t^2\lambda^2, \qquad
c_t=\alpha_t\lambda\|x_t-x_*\|^2, \qquad
h_t=\alpha_t^2 B,
\]
we arrive at the key inequality~\eqref{eqn:Dvoretzky-ineq}. 

In order to apply Theorem~\ref{thm:Dvoretzky}, we are left to check the conditions in~\eqref{eqn:Dvoretzky-conditions}.
Using the assumption $\alpha_t\to 0$, the definition of~$a_t$ in~\eqref{eqn:a_t-definition} implies that $\lim_{t\to\infty}a_t=\delta^2$. The conditions on $\{b_t\}$ and $\{h_t\}$ are satisfied due to the assumption $\sum_{t=0}^\infty\alpha_t^2\leq\infty$.
For $\{c_t\}$, if $\sum_{t=0}^\infty c_t<\infty$, then we must have $\|x_t-x_\star\|^2\to 0$ almost surely because of the assumption~$\sum_{t=0}^\infty\alpha_t=\infty$, 
and the conclusion of the theorem holds trivially.
Otherwise, $\sum_{t=0}^\infty c_t=\infty$ allows all the conditions in~\eqref{eqn:Dvoretzky-conditions} to hold, so we can invoke Theorem~\ref{thm:Dvoretzky} to finish the proof.
\end{proof}

Following the third remark after Theorem~\ref{thm:Dvoretzky}, the radius of the neighborhood~$\delta$ can be refined as follows. 
Let $\delta_t$ be the smallest constant such that $\|x_t-x_*\|\geq \delta_t$ implies
\[
2\|x_t-x_*\|_1\bigl(\sigma_t+\cO(\epsilon)\bigr) \leq \lambda\|x_t-x_*\|^2,
\]
where $\sigma_t$ is given by~\eqref{eqn:c_t-def}.
Then we can set $\delta=\limsup_{t\to\infty}\delta_t$ in~\eqref{eqn:limsup-delta}.

\subsection{Bias and variance trade-off}
\label{sec:bias-var-tradeoff}

Theorem~\ref{thm:bcosw-almost-sure} states that the sequence $\{x_t\}$ generated by the practical BCOS method converges almost surely to a neighborhood of~$x_*$, and the radius of the neighborhood is determined by the magnitude of~$\sigma_t$ and~$\epsilon$. 
Specifically, smaller~$\sigma_t$ and~$\epsilon$ ensure a smaller neighborhood of convergence.
According to~\eqref{eqn:c_t-def}, the magnitude of~$\sigma_t$ depends on both the bias and variance of the estimator~$v_t$. 
In this section, we examine the delicacy of this bias-variance trade-off and illustrate its effects through several examples. 

For convenience, we repeat the definition of~$\sigma_t$ here for quick reference:
\begin{equation}\label{eqn:c_t}
\sigma_t=\frac{\tau \!+\! \cO(\tau^2)}{2} + \left(1+\frac{\tau \!+\! \cO(\tau^2)}{2}\right)\left\|\frac{-(1/2)\corr_t(d_t, v_t)}{\sqrt{\SNR_t(d_t)\SNR_t(v_t\!+\!\epsilon)}} + \cO\!\left(\frac{1}{\SNR_t(v_t\!+\!\epsilon)}\right) \right\|_\infty \!\!.
\end{equation}
Apparently, in order to make~$\sigma_t$ small, it is desirable for the random variable~$v_t$ (as an estimator of $\E_t[d_t^2]$) to possess the following properties.
\begin{itemize}\itemsep 0pt
\item \emph{Low bias}. This means that Assumption~\ref{assum:v_t-bias} holds with small~$\tau$ and~$\epsilon$.
\item \emph{Low variance}, which naturally leads to \emph{high SNR}.
Notice that using a larger~$\epsilon$ increases $\SNR_t(v_t+\epsilon)=\E_t[v_t+\epsilon]^2/\Var_t(v_t)$, leading to a smaller~$\sigma_t$. However, it also increases the $\cO(\epsilon)$ term in~\eqref{eqn:practical-aiming}, which affects the radius of neighborhood through~\eqref{eqn:neighberhood-cond}.
\end{itemize}
In practice, it is hard to construct estimators with low bias and low variance simultaneously.
Below we give several examples to illustrate the trade-off in algorithm design. 
\begin{enumerate}
\item 
\textbf{Using $d_t^2$ as the estimator.}
The simplest estimator of $\E_t[d_t^2]$ is $d_t^2$ itself. 
Setting $v_t=d_t^2$ exhibiting low bias and high variance, specifically,
\[
\mathrm{Bias}=\left|\E_t[v_t]-\E_t[d_t^2]\right|=0, \qquad \mathrm{Variance}=\var_t(v_t)=\var_t(d_t^2).
\]
In this case, Assumption~\ref{assum:v_t-bias} is satisfied with $\tau=0$ and $\epsilon=0$. Then we have
\[
x_{t+1} ~=~ x_t - \alpha_t \frac{d_t}{\sqrt{d_t^2 +\epsilon}} ~=~ x_t - \alpha_t \sign(d_t),
\]
which corresponds to the sign-gradient method~\eqref{eqn:sign-gradient} or the sign-momentum method~\eqref{eqn:sign-momentum}, depending on the choice of~$d_t$.
Between them, sign-momentum is preferred because $\Var_t(m_t^2)$ is usually much smaller than~$\Var_t(g_t^2)$.
\item 
\textbf{Using a constant estimator across coordinates.}
Setting $v_t=c\ones_n$ for some constant $c>0$ exhibits high bias and low variance, specifically,
\[
\mathrm{Bias}=\left|\E_t[v_t]-\E_t[d_t^2]\right|=\left|c\ones_n-\E_t[d_t^2]\right|, \qquad \mathrm{Variance}=\var_t(v_t)=0.
\]
In this case, we need large~$\tau$ and~$\epsilon$ for Assumption~\ref{assum:v_t-bias} to hold. Correspondingly
\[
x_{t+1} = x_t - \alpha_t \frac{d_t}{\sqrt{c\ones_n +\epsilon}} = x_t - \alpha_t' d_t,
\]
where $\alpha_t'=\alpha_t/\sqrt{c +\epsilon}$ is a scalar stepsize.
This is equivalent to the classical SGD with or without momentum, depending on $d_t=m_t$ or $d_t=g_t$ respectively.
\item 
\textbf{Using EMA of $d_t^2$ as the estimator.}
EMA is widely adopted as an online estimator to approximate the mean of a random variable. For example, $v_t=\beta_1 v_{t-1} + (1-\beta)d_t^2$ is used in BCOS-g (Algorithm~\ref{alg:bcos-g}) and BCOS-m (Algorithm~\ref{alg:bcos-m}) to approximate $\E_t[d_t^2]$, with $d_t=g_t$ and $d_t=m_t$ respectively. The bias can be expressed as
\begin{align*}
\mathrm{Bias}&=\left|\E_t[v_t]-\E_t[d_t^2]\right|
=\left|\E_t\biggl[\sum_{k=1}^t (1-\beta) \beta^{t-k} d_k^2\biggr]-\E_t[d_t^2]\right|.
\end{align*}
It is possible to derive a sensible bound on the bias with a smoothness assumption on the loss function, but we do not delve into it here. 
On the other hand, we have a simple expression for its variance (derivation given in Appendix~\ref{sec:appendix-bias-var}):
\begin{align*}
    \var_t(v_t) &= (1-\beta)^2 \var_t ( d_t^2),
\end{align*}
which is much smaller than $\var_t(d_t^2)$ obtained by simply using $d_t^2$ as the estimator.
\item 
\textbf{The Adam estimator.} 
Adam uses stochastic momentum as the search direction, i.e., $d_t=m_t=\beta_1 d_{t-1}+(1-\beta_1)g_t$. 
However, it uses the EMA of~$g_t^2$, rather than the EMA of~$m_t^2$ (as in the previous example), to estimate $\E_t[m_t^2]$.
This causes a mismatch from the perspective of BCOS, and it is compensated by using a larger smoothing factor~$\beta_2$ in the estimator~$v_t=\beta_2 v_{t-1}+(1-\beta_2)g_t^2$.
Its bias can be expressed as
\begin{align*}
\mathrm{Bias}
%&=\bigl|\E_t[v_t]-\E_t[d_t^2]\bigr|
=\left|\E_t\biggl[\sum_{k=1}^t (1-\beta_2) \beta_2^{t-k} g_k^2\biggr]-\E_t\left[\biggl(\sum_{k=1}^t (1-\beta_1) \beta_1^{t-k} g_k\biggr)^{\!2}\right]\right|.
\end{align*}
Again, deriving a sensible upper bound on the bias can be quite involved and we do not pursue it here.
As for the variance, we get (see Appendix~\ref{sec:appendix-bias-var})
\begin{align*}
\var_t(v_t) &= (1-\beta_2)^2 \var_t \left( g_t^2 \right).
\end{align*}
If we choose $\beta_2=1-(1-\beta_1)^2$, then $\var_t(v_t)=(1-\beta_1)^4\var_t(g_t^2)$.
Therefore, the Adam estimator exhibits a very low variance.
\item 
\textbf{The conditional estimator of BCOS-c.}
The BCOS-c method (Algorithm~\ref{alg:bcos-c}) also uses $m_t$ as the search direction. Instead of relying on EMA, it uses a conditional estimator for $\E_t[m_t^2]$, i.e., $v_t=\beta^2m_{t-1}^2 + 2 \beta (1-\beta) m_{t-1} m_t + (1-\beta)^2 g_t^2$, where~$\beta$ is the smoothing factor for computing~$m_t$.
The bias and variance can be expressed as:
%This estimator exhibits low bias and low variance properties:
\begin{align*}
\mathrm{Bias_t}(v_t)
%&=\left|\E_t[v_t]-\E_t[d_t^2]\right|
&=2\beta^2(1-\beta) \left|m_{t-1}\odot\bigl(m_{t-1} - \E_t[g_t]\bigr) \right|, \\
\var_t(v_t) &= (1-\beta)^4 \left(4 \beta^2  m_{t-1}^2 \var_t(g_t) + \,\var_t(g_t^2) + 4 \beta m_{t-1}(\E_t[g_t^3] - \E_t[g_t] \E_t[g_t^2])\right).
\end{align*}
Notice that $|m_{t-1}-\E_t[g_t]|$ is the bias of using $m_{t-1}$ to estimate $\E_t[d_t]$, and the bias of~$v_t$ can be much smaller than that.
% Meanwhile, the variance of BCOS-c is the same as Adam when setting~$\beta_2$ in Adam as $\beta_2=1-(1-\beta)^2$. \todo{TJ: added the full BCOS-c.}
For the simple alternative $v_t=(1-(1-\beta)^2)m_{t-1}^2 + (1-\beta)^2 g_t^2$, we have simple expressions for both the bias and the variance:
%This estimator exhibits low bias and low variance properties:
\begin{align*}
\mathrm{Bias_t}(v_t)
%&=\left|\E_t[v_t]-\E_t[d_t^2]\right|
&=2\beta(1-\beta) \left|m_{t-1}\odot\bigl(m_{t-1} - \E_t[g_t]\bigr) \right|, \\
\var_t(v_t) &= (1-\beta)^4 \,\var_t(g_t^2).
\end{align*}
The variance of the simple alternative is the same as Adam when setting~$\beta_2$ in Adam as $\beta_2=1-(1-\beta)^2$.

\end{enumerate}

In addition to the bias-variance trade-off illustrated above, the expression of~$\sigma_t$ in~\eqref{eqn:c_t} also reveals the following tips for algorithm design:
\begin{itemize}
\item \emph{High SNR of $d_t$}. This is consistent with the empirical observation that using the momentum~$m_t$ as search direction is often better than using the stochastic gradient~$g_t$, because $m_t$ has much smaller variance, thus higher SNR, than~$g_t$.
\item \emph{Positive correlation between~$d_t$ and~$v_t$}. The negative sign before $\corr_t(d_t,v_t)$ implies that a positive correlation can lead to a smaller~$\sigma_t$.
Intuitively, a negative correlation between~$d_t$ and $v_t$ may cause large fluctuations of the ratio $d_t/\sqrt{v_t+\epsilon}$.
\end{itemize}
In summary, our theory captures several essential trade-offs that match intuitions and empirical observations.

\section{Conclusion}
\label{sec:conclusion}

BCOS is a family of stochastic approximation methods that exploit the flexibility of using different block-coordinate stepsizes.
Rather than using sophisticated techniques from optimization such as preconditioning, it is derived from the simple idea of minimizing the distance from the next iterate to a target point.
While the optimal stepsizes are not computable, we make several simplifications and focus on constructing efficient statistical estimators (for the second moment of the search direction) in determining the stepsizes.
In particular, by leveraging a simple conditional estimator, we derive variants of BCOS that achieve competitive performance against AdamW but require less memory and fewer hyperparameters. 

Our convergence analysis builds upon several classical results from the stochastic approximation literature. 
In particular, we extend the classical aiming condition towards a target point to account for coordinate-wise stepsizes and decoupled weight-decay.
%(interpreted as $L_2$-regularization). <-- This is not true!
%This condition does not assume convexity or smoothness, thus has broad applicability.
For the conceptual BCOS method, which assumes exact second moment, we establish almost-sure convergence and $\cO(1/t)$ rate of convergence. 
For practical BCOS methods with various second-moment estimators, we establish almost-sure convergence to a neighborhood of the target point, where the radius of the neighborhood is determined by the bias and variance of the second-moment estimator. 
This framework covers convergence analysis for a broad family of methods, including the 
%stochastic gradient/momentum method, 
%\todo{Added SGD to the algorithm class that we can analyze. But these are too obvious and covered by classical analysis anyway.}
sign stochastic gradient/momentum method, RMSProp and Adam(W).
In addition, it provides novel insights that can guide the development of new algorithms.

\iffalse
Future work: (Too obvious or too secretive, so we skip ...)
\begin{itemize}
\item more empirical study?
stepsize scheduler: currently based on the popular cosine scheduler which works well for Adam. More room for improvement, especially consdering adaptive schedulers!
\item
More work to do on spectral BCOS methods!
\end{itemize}
\fi

%%%%%%%%%%%%%%%%%%%%%%%%%%%%%%%%%%%%%%%%%%%%%%%%%%%%%%%%%%%%
\section*{Acknowledgments}

We thank Zeyuan Allen-Zhu for helping us with running experiments on GPT2, especially with rotary embedding and distributed data parallelism in PyTorch. We are grateful to Lisa Jin for her codebase that made our lives much easier on experimenting with ResNet and vision Transformers.
We also thank Yann Olivier for careful reading of an early version of the draft and providing feedback.
Last but not least, we extend our gratitude to Damek Davis, Anil Damle, Adrian Lewis, James Renegar and Katya Scheinberg for their comments and insights that helped us clarify and refine the theoretical underpinnings of the paper.

%%%%%%%%%%%%%%%%%%%%%%%%%%%%%%%%%%%%%%%%%%%%%%%%%%%%%%%%%%%%
\appendix

\section{}

\subsection{Aiming versus convexity}
\label{sec:aiming-vs-convexity}

Our aiming condition in Assumption~\ref{assum:aiming} does not have an upper bound as in the classical condition~\eqref{eqn:aiming-sakrison}, meaning that it does not require smoothness. 
Smoothness would give a better bound on the bias and variance of the second-moment estimator of the practical BCOS algorithm, but does not affect stability of the algorithm as long as Assumption~\ref{assum:v_t-bias} holds.
Here we focus on the connection of the aiming condition with convexity.

For ease of comparison, we consider the general block structure described in Section~\ref{sec:notations} with $d_{t}=\nabla f(x_t,\xi_t)$ and set $\lambda=0$.
In this case, we have $\E[d_t]=\nabla F(x_t)$ and 
\begin{equation}\label{eqn:general_block_weak_aiming}
    \sum_{k=1}^m \left\langle x_{k}-x_{\ast, k}, ~\frac{\nabla_k F(x)}{\sqrt{\E\bigl[\norm{\nabla_k f(x, \xi)}^2\bigr]}}\right\rangle \geq 0, \qquad \forall\, x,
\end{equation}
where $\nabla_k F(x)\in\R^{n_k}$ denotes the sub-vector of $\nabla F(x)$ corresponding to the $k$th block.
In the specific case of a full-dimensional block ($m=1$), the denominator $\sqrt{\E\bigl[\|\nabla f(x,\xi)\|^2\bigr]}$ can be omitted without affecting the inequality and the aiming condition simplifies to:
\begin{equation}\label{eqn:aiming-full-block}
\left\langle x-x_\ast, \nabla F(x)\right\rangle \geq 0, \qquad \forall\, x.
\end{equation}
This is a direct consequence of convexity when $x_\ast$ is a minimizer of $F$, which is characterized by
\citep[e.g.,][]{Nesterov04book}:
\[
    \left\langle x-y, \nabla F(x) - \nabla F(y)\right\rangle \geq 0, \qquad \forall\, x, y.
\]
To see the implication, simply substituting $y=x_\ast$ and $\nabla F(x_\ast)=0$ into the above inequality gives the full-dimensional aiming condition.

It is easy to come up with examples that satisfy the aiming condition but not convexity.
%To illustrate the aiming condition~\eqref{eqn:aiming-sakrison}, 
For simplicity, we consider the 1-dimensional case and without loss of generality let $x^*=0$. Let $g(x):=\E_{\xi}[g(x,\xi)]$ be any continuous function that satisfies $ g(x) > 0$ for $x> 0$ and $g(x)<0$ for $x<0$, which leads to
$$\langle x-x^*, g(x)\rangle =x \cdot g(x) > 0,$$ for all $x\neq 0$.
For example, consider
$$g(x)=x(x-3)^2(x+2)^2+\lambda x,$$ 
which is a non-monotone polynomial, thus the gradient of a nonconvex polynomial. Yet, this function satisfies the aiming condition~\eqref{eqn:aiming-sakrison} for $x^*=0$ and for any $\lambda\geq 0$. 

%Suppose $g(x)$ is the gradient of some loss function $f(x)$. Then this condition is much weaker than convexity, which would require monotonicity of $g(x)$, that is, $(x-y)(g(x)-g(y))>0$ for all $x,y\in\R$. 

However, other than the full-dimensional block case, the aiming condition~\eqref{eqn:general_block_weak_aiming} is not necessarily implied by convexity.
As an example, consider the case of single coordinate blocks with uniform SiF, i.e., $\rho_{t,k}=\rho_{t,k'}$ for all $k,k'\in\{1,\ldots,n\}$.
In light of the equivalent formulation using SiF in~\eqref{eqn:aiming-sif}, the corresponding aiming condition becomes
\begin{equation}\label{eqn:sign-aiming}
    \left\langle x-x_\ast,\, \sign(\nabla F(x))\right\rangle \geq 0, \qquad \forall\, x.
\end{equation}
Comparing this condition with convexity, it is easy to construct examples that satisfy one of them while failing the other.

To illustrate the fundamental differences between the coordinate-wise aiming condition~\eqref{eqn:sign-aiming} and the standard convexity condition~\eqref{eqn:aiming-full-block}, we provide the following two counterexamples, each satisfying one condition while failing the other:
\begin{itemize}
    \item \textit{Aiming but not convex}: Let $f(x):=\log(x)$ with the target solution $x_\ast = 0$. On the domain of $\R_+$, the gradient is $f'(x)=\frac{1}{x}$, and thus $\sign(f'(x))=1$ for all $x > 0$. Consequently, for any $x\in \R_+$, we have 
    \[
    \langle x-x_\ast, \sign(\nabla f(x))\rangle = x \geq 0,
    \]
    satisfying the aiming condition~\eqref{eqn:sign-aiming}. However, $\log(x)$ is a concave function, thus failing the convex inequality~\eqref{eqn:aiming-full-block}.
    \item \textit{Convex but not aiming}: Consider the quadratic function $f:\R^2 \to \R, f(x) = \frac{1}{2}x^T A x$. Choose the coefficient matrix as
    $$A = \begin{pmatrix}
    1\\
    -2
    \end{pmatrix}
    \begin{pmatrix}
    1\\
    -2
    \end{pmatrix}^T=
    \begin{bmatrix}
        1 & -2\\
        -2 & 4
    \end{bmatrix}\succeq 0.$$
    Since $A$ is positive semidefinite, the function $f$ is convex and attains its minimum at $x_\ast = \mathbf{0}$. The gradient of $f$ is 
    \[
    \nabla f(x) = Ax = \begin{bmatrix}
    x_1 - 2x_2\\
    -2x_1 + 4x_2.
    \end{bmatrix}.
    \]
    Evaluating the aiming condition~\eqref{eqn:sign-aiming} at $x=(1.5,1)^T$, we get 
    \[
    \langle x-x_\ast, \sign(\nabla f(x))\rangle  = 1.5 \times \sign(-0.5) + 1 \times \sign(1)  = 1.5 \times (-1) + 1 \times 1 = -0.5 \leq 0.
    \]
    Thus, the aiming condition~\eqref{eqn:sign-aiming} does not hold at this point even though $f$ is convex. 
\end{itemize}

\subsection{Proof of Lemma~\ref{lem:bound-frac-expect}}
\label{sec:appendix-frac-expect}
\begin{proof}
The proof leverages the Taylor expansion of $\phi(y):=1/\sqrt{y}$. Specifically, we have 
\[
\phi(y+\delta)=\sum_{p=0}^{\infty}\frac{\phi^{(p)}(y)}{p!}\delta^p, 
\]
where
\[
\phi^{(p)}=(-1)^p \left(\frac{1}{2} \cdot \frac{3}{2} \cdots \left(p-\frac{1}{2}\right)\right) y^{-\frac{2p+1}{2}} 
= (-1)^p \frac{(2p)!}{4^p p!} y^{-\frac{2p+1}{2}}.
\]
Taylor expansion (element-wise) at $y=\E_t[d_t^2]$ with $\delta=\E_t[v_t]+\epsilon-\E_t[d_t^2]$ yields the following approximation:
\begin{align}
\left| \frac{\E_t[d_t]}{\sqrt{\E_t[d_t^2]}} - \frac{\E_t[d_t]}{\sqrt{\E_t[v_t]+\epsilon}}\right| 
&= \left| \E_t[d_t](\phi(y)-\phi(y+\delta))\right| 
 =\left| \E_t[d_t]\left(\sum_{p=1}^{\infty}\frac{\phi^{(p)}(y)}{p!}\delta^p\right)\right| \nonumber\\
&= \left|\E_t[d_t]\left(\sum_{p=1}^{\infty}(-1)^p \frac{(2p)!}{4^p (p!)^2} \E_t[d_t^2]^{-\frac{2p+1}{2}} (\E_t[v_t]+\epsilon-\E_t[d_t^2])^p \right)\right|\nonumber\\
&\leq \left|\E_t[d_t]\left(\sum_{p=1}^{\infty} \frac{(2p)!}{4^p (p!)^2} \E_t[d_t^2]^{-\frac{2p+1}{2}} (\tau \E_t[d_t^2] + 2\epsilon)^p \right)\right|\label{eqn:apply-bound-frac-expect}\\
&= \left|\E_t[d_t]\left(\sum_{p=1}^{\infty} \frac{(2p)!}{4^p (p!)^2} \E_t[d_t^2]^{-\frac{2p+1}{2}} \tau^p \E_t[d_t^2]^p + \cO(\epsilon) \right)\right|\nonumber\\
&= \left|\frac{\E_t[d_t]}{\sqrt{\E_t[d_t^2]}}\left(\sum_{p=1}^{\infty} \frac{(2p)! \tau^p}{4^p (p!)^2}  + \cO(\epsilon) \right)\right|\nonumber\\
&= \frac{\tau +\cO(\tau^2)}{2} \left|\frac{\E_t[d_t]} {\sqrt{\E_t[d_t^2]}}\right| + \cO(\epsilon) \nonumber
\end{align}
where the inequality~\eqref{eqn:apply-bound-frac-expect} is a consequence of Assumption~\ref{assum:v_t-bias}.
\end{proof}

\subsection{Proof of Lemma~\ref{lem:expectation_ratio_sqrt}}
\label{sec:proof-lemma-ratio-sqrt}

We first derive a useful approximation for a general smooth function.

\begin{lemma}\label{lem:expectation_g_genearl}
Let $\phi:\R^n\to\R$ be a smooth ($C^\infty$) function and $X\in\R^n$ a random variable. Then we have
\begin{equation}\label{eq:lem:expectation_g_genearl}
    \E[\phi(X)] = \phi(\E[X]) + \frac{1}{2} \bigl\langle \nabla^2 \phi(\E[X]), \cov(X)\bigr\rangle + \sum_{|\bm{p}|=3}^\infty \!\frac{D^{(\bm{p})} \phi(\E[X])}{\bm{p}!} \E\bigl[(X\!-\!\E[X])^{(\bm{p})}\bigr],
\end{equation}
where $\langle\cdot,\cdot\rangle$ denotes matrix inner product, i.e,  $\langle A, B\rangle=\Tr(A^T B)$. 
For the multi-variable derivatives, 
we denote $\bm{p}\in \{0,1,2,\ldots\}^n$, $|\bm{p}|=p_1+\cdots+p_n$,
$\bm{p}!=p_1!\cdots p_n!$, and
\begin{align*}
D^{(\bm{p})}\phi 
&=\frac{\partial^{|p|} \phi}{\partial X^p} = \frac{\partial^{p_1+\cdots+p_n} \phi}{\partial X_1^{p_1} \cdots \partial X_n^{p_n}}, \\
(X-\E[X])^{(\bm{p})} &= (X_1-\E[X_1])^{p_1}\cdots (X_n-\E[X_n])^{p_n}.
\end{align*}
\end{lemma}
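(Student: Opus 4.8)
The plan is to obtain the formula by expanding $\phi$ in its multivariate Taylor series about the mean $\mu:=\E[X]$ and then integrating term by term. Writing the expansion in the multi-index notation introduced in the statement,
\[
\phi(X) = \sum_{\bm{p}} \frac{D^{(\bm{p})}\phi(\mu)}{\bm{p}!}\,(X-\mu)^{(\bm{p})},
\]
I would take expectations on both sides and exchange $\E[\cdot]$ with the summation, so that
\[
\E[\phi(X)] = \sum_{\bm{p}} \frac{D^{(\bm{p})}\phi(\mu)}{\bm{p}!}\,\E\bigl[(X-\mu)^{(\bm{p})}\bigr].
\]
Everything after this is bookkeeping organized by the order $|\bm{p}|$ of each term.

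Next I would isolate the low-order contributions. The unique multi-index with $|\bm{p}|=0$ contributes $\phi(\mu)\,\E[1]=\phi(\mu)$, giving the leading term. Every multi-index with $|\bm{p}|=1$ contributes a factor $\E[X_i-\mu_i]=0$, so all first-order terms vanish, which is precisely the payoff of centering at the mean. For $|\bm{p}|=2$ there are two cases: a single coordinate with $p_i=2$ contributes $\tfrac12\,\partial^2_{ii}\phi(\mu)\,\Var(X_i)$ because $\bm{p}!=2$, while a pair $p_i=p_j=1$ with $i\ne j$ contributes $\partial^2_{ij}\phi(\mu)\,\cov(X_i,X_j)$ because $\bm{p}!=1$. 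I would then verify that the sum of these second-order terms reproduces exactly $\tfrac12\langle\nabla^2\phi(\mu),\cov(X)\rangle=\tfrac12\sum_{i,j}\partial^2_{ij}\phi(\mu)\,\cov(X_i,X_j)$, where the off-diagonal entries match because each unordered pair $\{i,j\}$ is counted once in the Taylor sum but appears twice, symmetrically, in the matrix inner product. Collecting the remaining terms with $|\bm{p}|\ge 3$ into the stated remainder sum then yields \eqref{eq:lem:expectation_g_genearl}.

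The main obstacle is justifying the interchange of expectation and the infinite summation, that is, the convergence of the term-by-term integrated series to $\E[\phi(X)]$. In full generality this requires integrability and growth control on the derivatives of $\phi$, together with finiteness of all mixed central moments of $X$; a clean sufficient setting is $X$ with bounded support (or $\phi$ analytic with a dominating bound), which permits dominated convergence to pass the expectation inside the sum. For the use made of this lemma, however, the expansion is applied perturbatively in a small-variance regime, so I would present the identity as a formal (asymptotic) Taylor expansion under a standing smoothness-and-moment hypothesis, emphasizing that the displayed terms through second order are exact and that the collected $|\bm{p}|\ge 3$ sum is genuinely of higher order in the central moments of $X$.
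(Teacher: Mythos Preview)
Your proposal is correct and follows essentially the same approach as the paper: expand $\phi$ in a Taylor series about $\mu=\E[X]$, take expectations term by term, note that the first-order terms vanish because $\E[X-\mu]=0$, and identify the second-order sum with $\tfrac{1}{2}\langle\nabla^2\phi(\mu),\cov(X)\rangle$. The paper's proof is more compact---it writes the expansion already grouped by order rather than in pure multi-index form---and, like you ultimately suggest, treats the identity formally without addressing convergence of the series or the interchange of expectation and summation.
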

   
\begin{proof}
    Let $\delta:=X-\E[X]$ and consider the Taylor expansion of~$\phi$ at $\E[X]$:
\begin{align*}
    \phi(X) &= \phi(\E[X]) + \nabla \phi(\E[X])^T \delta + \frac{1}{2}\delta^T \nabla^2 \phi(\E[X]) \delta + \sum_{|\bm{p}|=3}^\infty \frac{D^{(\bm{p})} \phi(\E[X])}{\bm{p}!} \delta^{(\bm{p})}.
\end{align*}
Taking expectation with respect to $X$, we have $\E[\delta]=\E\bigl[X-\E[X]\bigr]=0$. Therefore,
\[
\E\bigl[\nabla \phi(\E[X])^T \delta\bigr] = \nabla \phi(\E[X])^T \E\bigl[\delta\bigr] = 0,
\]
which, together with the equation $\cov(X)=\E[\delta\delta^T]$, yields the desired result.
\end{proof}

\smallskip

\begin{proof}[Proof of Lemma~\ref{lem:expectation_ratio_sqrt}]
    We apply Lemma~\ref{lem:expectation_g_genearl} with $X:=(Y,Z)$ and $\phi(x)=\phi(y,z):= \frac{y}{\sqrt{z}}$. First, the gradient and Hessian of~$g$ can be calculated as
\[
    \nabla \phi(x)=\nabla \phi(y,z) 
    %= \begin{pmatrix}
    = \begin{pmatrix} \displaystyle
        \frac{1}{z^{1/2}}\\[2ex]
        \displaystyle
        -\frac{y}{2z^{3/2}}
    \end{pmatrix},\qquad 
    \nabla^2 \phi(x) = \nabla^2 \phi(y,z) = \begin{bmatrix}
    \displaystyle 0, & \displaystyle -\frac{1}{2z^{3/2}}\\[2ex]
    \displaystyle -\frac{1}{2z^{3/2}}, &\displaystyle  \frac{3y}{4z^{5/2}}
    \end{bmatrix}.
\]
    For general $p$-th partial derivative, we derive the following result for any $q\in\{0,\ldots,p\}$:
\begin{align*}
        \frac{\partial^p \phi}{\partial y^q \partial z^{p-q}}= \frac{\partial^{p-q}}{\partial z^{p-q}}\left(\frac{\partial^q \phi}{\partial y^q}\right) = \begin{cases}
            \displaystyle
            0 \quad &\text{if } q \geq 2,\\[2ex]
            \displaystyle
            \frac{\partial^{p-1} }{\partial z^{p-1}} \frac{1}{\sqrt{z}}= (-1)^{p-1} \frac{(2p-2)!}{4^{p-1} (p-1)!} z^{-\frac{2p-1}{2}} \quad &\text{if } q = 1,\\[2ex]
            \displaystyle
            y\cdot\frac{\partial^{p} }{\partial z^{p}} \frac{1}{\sqrt{z}}= (-1)^{p} \frac{(2p)!}{4^{p} p!} yz^{-\frac{2p+1}{2}} \quad &\text{if } q = 0.\\
        \end{cases}
\end{align*}
Substitute the Hessian and $p$-th order partial derivative into~\eqref{eq:lem:expectation_g_genearl}, we get
\allowdisplaybreaks
\begin{align*}
\E\left[\frac{Y}{\sqrt{Z}}\right]
        &=\frac{\E[Y]}{\sqrt{\E[Z]}} - \E\left[\frac{(Y-\E[Y])(Z-\E[Z])}{2\E[Z]^{3/2}}\right] + \E\left[\frac{3\E[Y](Z-\E[Z])^2}{8\E[Z]^{5/2}}\right]\\
        &\quad +\sum_{p=3}^\infty \frac{1}{p!}(-1)^{p-1}\frac{p(2p-2)!}{4^{p-1} (p-1)!} \E\left[\frac{(Y-\E[Y])(Z-\E[Z])^{p-1}}{\E[Z]^{\frac{2p-1}{2}}} \right]\\
        &\quad + \sum_{p=3}^\infty \frac{1}{p!} (-1)^{p} \frac{(2p)!}{4^{p} p!} \E\left[\frac{\E[Y](Z-\E[Z])^p}{\E[Z]^{\frac{2p+1}{2}}}\right]\\
        &=\frac{\E[Y]}{\sqrt{\E[Z]}} -\frac{\cov(Y,Z)}{2\E[Z]^{3/2}} + \frac{3\E[Y]\var(Z)}{8\E[Z]^{5/2}} \\
        &\quad + \cO\left(\frac{\E\bigl[(Y-\E[Y])(Z-\E[Z])^{p-1}\bigr]}{\E[Z]^{p-1/2}}\right) + \cO\left(\frac{\E[Y]\E\bigl[(Z-\E[Z])^p\bigr]}{\E[Z]^{p+1/2}}\right)\\
        &=\frac{\E[Y]}{\sqrt{\E[Z]}}\Biggl(1 -\frac{\cov(Y,Z)}{2\E[Y]\E[Z]} + \frac{3\var(Z)}{8\E[Z]^2} \\
        &\quad + \cO\left(\frac{\E\bigl[(Y-\E[Y])(Z-\E[Z])^{p-1}\bigr]}{\E[Y]\E[Z]^{p-1}}\right) + \cO\left(\frac{\E\bigl[(Z-\E[Z])^p\bigr]}{\E[Z]^{p}}\right)\Biggr),
\end{align*}
which is the desired result.
\end{proof}

\subsection{Bias and variance calculation}
\label{sec:appendix-bias-var}
Here we derive the bias and variance expressions for some estimators listed in Section~\ref{sec:bias-var-tradeoff}.
\begin{itemize}
\item \textbf{Using EMA of $d_t^2$ as estimator.}
In this case, we have $v_t=\beta_1 v_{t-1} + (1-\beta)d_t^2$ and
    \begin{align*}
        \var_t(v_t) &= \E_t \!\left[\left(\sum_{k=1}^t (1-\beta) \beta^{t-k} d_k^2 - \sum_{k=1}^t (1-\beta) \beta^{t-k} \E_t[d_k^2]\right)^2\right]\\
        &= \E_t \!\left[\left(\sum_{k=1}^{t-1} (1-\beta) \beta^{t-k} d_k^2+ (1-\beta) d_t^2 - \sum_{k=1}^{t-1} (1-\beta) \beta^{t-k} d_k^2 - (1-\beta) \E_t[d_k^2]\right)^2\right]\\
        &= (1-\beta)^2 \E_t \!\left[\left( d_t^2 - \E_t[d_t^2]\right)^2\right]\\
        &= (1-\beta)^2 \var_t \left( d_t^2\right).
    \end{align*}
 
\item \textbf{The Adam estimator.}
In this case, we have $v_t=\beta_2 v_{t-1}+(1-\beta_2)g_t^2$ and
    \begin{align*}
        \var_t(v_t) &= \E_t \left[\left(\sum_{k=1}^t (1-\beta_2) \beta_2^{t-k}  g_k^2 -\sum_{k=1}^t (1-\beta_2) \beta_2^{t-k} \E_t\left[g_k^2\right]\right)^2\right]\\
        &= \E_t \left[\left(\sum_{k=1}^{t-1} (1-\beta_2) \beta_2^{t-k} g_k^2+ (1-\beta_2) g_t^2 - \sum_{k=1}^{t-1} (1-\beta_2) \beta_2^{t-k} g_k^2 - (1-\beta_2) \E_t[g_t^2]\right)^2\right]\\
        &= (1-\beta_2)^2 \E_t \left[\left( g_t^2 - \E_t[g_t^2]\right)^2\right]\\
        &= (1-\beta_2)^2 \var_t \left( g_t^2 \right).
    \end{align*}
\item 
\textbf{The conditional estimator of BCOS-c.}
In this case, we have $d_t=m_t=\beta m_{t-1} + (1-\beta)g_t$ and 
$v_t = \beta^2 m_{t-1}^2 + 2\beta (1-\beta) m_{t-1} m_{t} + (1-\beta)^2 g_t^2$. Therefore, we derive the bias and variance as follows
\begin{align*}
        \mathrm{Bias}&=|\E_t[v_t]-\E_t[d_t^2]|\\
        &=\Big|\E_t\left[\beta^2 m_{t-1}^2 + 2\beta (1-\beta) m_{t-1} (\beta m_{t-1} + (1-\beta)g_t) + (1-\beta)^2 g_t^2\right]\\
        &\quad -\E_t\left[\left(\beta m_{t-1} + (1-\beta)g_t\right)^2\right]\Big|\\
        &=\Big|\beta^2 m_{t-1}^2  + 2\beta (1-\beta)m_{t-1}(\beta m_{t-1} + (1-\beta)\E_t[g_t])
        + (1-\beta)^2 \E_t\left[g_t^2\right]\\
        &\quad -\beta^2 m^2_{t-1} - 2\beta (1-\beta) m_{t-1} \E_t\left[g_t\right] - (1-\beta)^2\E_t\left[g_t^2\right]\Big|\\
        &=\left|2\beta (1-\beta)m_{t-1} (\beta m_{t-1} + (1-\beta)\E_t[g_t] - \E_t[g_t]) \right|\\
        &=2\beta^2(1-\beta) \left|m_{t-1}\left(m_{t-1} - \E_t\left[g_t\right]\right) \right|,
    \end{align*}
and
    \begin{align*}
        \var_t(v_t) &= \E_t \Big[\Big(\beta^2 m_{t-1}^2 + 2\beta (1-\beta) m_{t-1} (\beta m_{t-1} + (1-\beta)g_t) + (1-\beta)^2 g_t^2 \\
        &\quad - \beta^2 m_{t-1}^2 - 2\beta (1-\beta) m_{t-1} (\beta m_{t-1} + (1-\beta)\E_t[g_t]) - (1-\beta)^2 \E_t[g_t^2]\Big)^2\Big]\\
        &= \E_t \Big[\Big(2\beta (1-\beta)^2 m_{t-1} (g_t - \E_t[g_t]) + (1-\beta)^2 (g_t^2 -\E_t[g_t^2])\Big)^2\Big]\\
        &= 4\beta^2 (1-\beta)^4 m_{t-1}^2 \E_t[(g_t-\E_t [g_t])^2] 
        + 4\beta (1-\beta)^4 m_{t-1} \E_t[(g_t - \E_t[g_t])(g_t^2 -\E_t[g_t^2])]\\
        &\quad + (1-\beta)^4 \E_t \left[\left(g_t^2-\E_t[g_t^2]\right)^2\right]\\
        &= 4\beta^2 (1-\beta)^4 m_{t-1}^2 \var_t(g_t) + 4\beta (1-\beta)^4 m_{t-1} \left(\E_t[g_t^3] - \E_t[g_t] \E_t[g_t^2]\right) + (1-\beta)^4 \var_t \left(g_t^2\right).
    \end{align*}

The simple alternative $v_t = (1-(1-\beta)^2)m_{t-1}^2 + (1-\beta)^2 g_t^2$ has bias and variance in simpler form:
    \begin{align*}
        \mathrm{Bias}&=|\E_t[v_t]-\E_t[d_t^2]|\\
        &=\left|\E_t\left[(1-(1-\beta)^2)m_{t-1}^2 + (1-\beta)^2 g_t^2\right]-\E_t\left[\left(\beta m_{t-1} + (1-\beta)g_t\right)^2\right]\right|\\
        &=\left|(2\beta-\beta^2)m_{t-1}^2 + (1-\beta)^2 \E_t\left[g_t^2\right]-\beta^2 m^2_{t-1} - 2\beta (1-\beta) m_{t-1} \E_t\left[g_t\right] - (1-\beta)\E_t\left[g_t^2\right]\right|\\
        &=\left|(2\beta-2\beta^2)m_{t-1}^2 - 2\beta (1-\beta) m_{t-1} \E_t\left[g_t\right] \right|\\
        &=2\beta(1-\beta) \left|m_{t-1}\left(m_{t-1} - \E_t\left[g_t\right]\right) \right|,
    \end{align*}
and
    \begin{align*}
        \var_t(v_t) &= \E_t \left[\left((1-(1-\beta)^2)m_{t-1}^2 + (1-\beta)^2 g_t^2-(1-(1-\beta)^2)m_{t-1}^2 - (1-\beta)^2 \E_t[g_t^2]\right)^2\right]\\
        &= (1-\beta)^4 \E_t \left[\left(g_t^2-\E_t[g_t^2]\right)^2\right]\\
        &= (1-\beta)^4 \var_t \left(g_t^2\right).
    \end{align*}
\end{itemize}

\vskip 0.2in
\bibliography{bcos}

\end{document}